\documentclass[11pt]{article}

\usepackage{latexsym}
\usepackage{amssymb}
\usepackage{amsthm}
\usepackage{amscd}

\usepackage{amsmath}
\usepackage{mathtools,leftindex,tensor,mhchem}

\usepackage{mathrsfs}
\usepackage[all]{xy}
\usepackage{hyperref} 
\usepackage[usenames,dvipsnames]{color}
\usepackage{graphicx,eepic}
\usepackage{float}

\usepackage{color}
\usepackage{latexsym}
\usepackage{amssymb}
\usepackage{amsthm}
\usepackage{amscd}
\usepackage{amsmath}
\usepackage{mathrsfs}
\usepackage{graphicx}
\usepackage{hyperref}
\usepackage{shuffle}
\usepackage{tikz-cd}
\usepackage[all]{xy}
\input xy \xyoption{frame}
\usepackage{ytableau}

\usepackage{enumerate,tikz}
\usepackage{color}
\usepackage{mathdots}
\usepackage[vcentermath,enableskew]{youngtab}

\usepackage{mathabx}
\usepackage{mathtools}
\usepackage{array}

\definecolor{darkred}{rgb}{0.7,0,0} 
\newcommand{\defn}[1]{{\color{darkred}\emph{#1}}} 

\numberwithin{equation}{section}

\theoremstyle{definition}
\newtheorem* {theorem*}{Theorem}
\newtheorem* {conjecture*}{Conjecture}
\newtheorem{theorem}{Theorem}[section]

\theoremstyle{definition}

\theoremstyle{definition}

\newtheorem* {example*}{Example}

\newtheorem{lemma}[theorem]{Lemma}
\theoremstyle{definition}
\newtheorem{definition}[theorem]{Definition}
\theoremstyle{definition}

\newtheorem{proposition}[theorem]{Proposition}
\newtheorem{corollary}[theorem]{Corollary}

\newtheorem* {remark*}{Remark}
\newtheorem {remark}[theorem]{Remark}
\theoremstyle{definition}
\newtheorem {example}[theorem]{Example}
\theoremstyle{definition}

\theoremstyle{definition}

\theoremstyle{definition}

\theoremstyle{definition}

\def\({\left(}
\def\){\right)}

\newcommand{\QQ}{\mathbb{Q}}

\newcommand{\cK}{\mathcal{K}}

\def\barr{\begin{array}}
\def\earr{\end{array}}
\def\ba{\begin{aligned}}
\def\ea{\end{aligned}}
\def\be{\begin{equation}}
\def\ee{\end{equation}}

\def\qquand{\qquad\text{and}\qquad}

\def\cH{\mathcal H}

\def\ben{\begin{enumerate}}
\def\een{\end{enumerate}}

\def\cF{\mathcal F}

\def\x{\textbf{x}}
\def\y{\textbf{y}}

\def\c{\textbf{c}}
\def\d{\textbf{d}}

\def\R{R}

\def\cF{\mathcal F}

\def\h {\mathrm{ht}}

\def\m{\mathfrak{m}}

\def\invsim_i{\overset{\mathrm{i}}{\underset{\mathrm{inv}}{\sim}}}

\def\m{\mathbf{m}}

\usepackage[colorinlistoftodos]{todonotes}

\newcommand{\mleftrightarrow}[1]{\mathbin{{\overset{#1}{\longleftrightarrow}}}}

\def\R{\mathcal{R}}

\usepackage{fullpage}

\def\ben{\begin{enumerate}}
\def\een{\end{enumerate}}

\def\h{\mathrm{ht}}

\makeatletter
\renewcommand{\@makefnmark}{\mbox{\textsuperscript{}}}
\makeatother

\allowdisplaybreaks[1]

\UseCrayolaColors

\begin{document}
\title{Recursive structures of molecules and cells in Gelfand $S_n$-graphs}
\author{
Zhiqiang DAI \\ Beihang University \\ {\tt daizhiqiang@buaa.edu.cn}
\and
Yifeng ZHANG \\  South China Normal University \\ {\tt calvinz314159@gmail.com}
}

\date{}

\maketitle

\begin{abstract}
$W$-graphs, representing the multiplication action of the standard basis on the canonical basis in the Iwahori-Hecke algebra are introduced by Kazhdan and Lusztig. Marberg defined a generalized $W$-graph, the Gelfand W-graph, corresponding to the Hecke algebra modules instead of Hecke algebras. To classify the molecules and cells of the Gelfand $S_n$-graphs, in this paper, we introduce a recursive structure of $S_n$ and then discuss the action of the recursive structure on the molecules. Using this structure, we prove that the distinguished nonnesting molecule is a cell.
\end{abstract}

\setcounter{tocdepth}{2}
\tableofcontents

\section{Introduction}
For any Coxeter system $(W,S)$, the associated \defn{Iwahori-Hecke algebra} $\cH$  is equipped with two distinct bases: a \defn{standard basis} $\{H_w : w \in W\}$ and \defn{Kazhdan--Lusztig basis} $\{ \underline H_w : w \in W\}$ \cite{KL}. The action of left multiplication by elements of the standard basis on the Kazhdan-Lusztig basis can be visualized as a directed graph, i.e., the \defn{left Kazhdan-Lusztig graphs} of $W$. These graphs serve as canonical examples of \defn{$W$-graphs}, weighted directed graphs that encode $\cH$-module structures with canonical bases analogous to $\{ \underline H_w : w \in W\}$.

A crucial combinatorial challenge in studying $W$-graphs is the classification of their \defn{cells}, defined as the strongly connected components of the graphs. This task is foundational because the cells inherits the $W$-graph structure from the original graph by restriction. A parallel research goal focuses on characterizing \defn{molecules} within W-graphs, which are the connected components of the undirected graph formed by retaining only bidirectional edges.

Among the $W$-graphs, we concentrated on the \defn{Gelfand $W$-graphs} defined by Marberg and Zhang in \cite{MZ}. These graphs have \defn{fixed-point-free(FPF) involutions} as vertices and they are generated by \defn{perfect models}, which is the sum of all irreducible representations without repetition.

In this paper, we focus on the type A case, when $W$ is the symmetric group $S_{n}$ for even $n$. Let $\cF_n$ be the set of FPF involutions in $S_{n}$ for even $n$. In this case, Marberg and Zhang classified the molecules using combinatorial methods in \cite{MZ3}. They defined a Robinson-Schensted-Knuth-like insertion, which maps the FPF involutions to special tableaus. Then the molecules are just the set of FPF involutions corresponding to tableaus with the same shape. 

 The FPF involutions can be partitioned into $n$ subsets according to the other element in the 2-cycle containing 1. The subset of $\cF_n$,  whose elements begin with $(1,2)$, can correspond to $\cF_{n-2}$ by an operator $\rho$. Therefore, we can represent $\cF_n$ by some copies of $\cF_{n-2}$, and call it the \defn{recursive structure} of $\cF_n$. Applying the recursive structure, we analyze its action on the molecules and prove that the distinguished molecule $C_1^n$ is a cell.

This paper is organized as follows. Section~\ref{prelim-sect} contains some preliminaries on $W$-graph and molecules. Section \ref{recurs-sect} proposes the recursive structure and its properties. Section \ref{stair-sect} gives a criterion of the boundaries between two different molecules. In Section~\ref{mole-sect}, we study the actions of recursive structures on the molecules. Finally, in Section~\ref{cell-sect}, we prove that the distinguished nonnesting molecule is a cell, using a two-row Specht quotient, integral Garnir straightening, and the web basis.


\section{Preliminaries}\label{prelim-sect}
In this section, we introduce the quasiparabolic Kazhdan-Lusztig theory which restricts the Coxeter group $W$ to type A.

\subsection{Quasiparabolic conjugacy class of type A}

Rains and Vazirani introduce the following definitions in \cite[\S2]{RV}.

\begin{definition}\label{scaled-def}
A \defn{scaled $W$-set} is a $W$-set $X$ with a height function $\h : X \to \QQ$ satisfying
\[|\h(x) - \h(sx)| \in \{0,1\}\qquad\text{for all $s \in S$ and $x \in X$.}\]
\end{definition}


Denote the set of reflections in $W$ by
$\R = \{ wsw^{-1} : w \in W \text{ and }s \in S\}.$
 
\begin{definition}\label{qp-def} A scaled $W$-set $(X,\h)$ is \defn{quasiparabolic} if both of 
the following properties hold:
\ben
\item[] \hspace{-7mm}(QP1) If $\h(rx) = \h(x)$ for some $(r,x) \in \R\times X$ then $rx =x$.

\item[] \hspace{-7mm}(QP2) If  $\h(rx) > \h(x)$ and $\h(srx) < \h(sx)$ for some $(r,x,s) \in \R\times X \times S$ then $rx=sx$.

\een
\end{definition}

\begin{example}\label{case0-ex}
The set  $W$ with height function $\h=\ell$
is quasiparabolic relative to its action on itself by left (also, by right) multiplication
and also when viewed as a scaled $W\times W$-set relative to the action $(x,y) : w\mapsto xwy^{-1}$;
 see \cite[Theorem 3.1]{RV}.
\end{example}


\begin{example}\label{cc-ex}
A conjugacy class in $W$ is a scaled $W$-set relative to  conjugation  and the  height function $\h=\ell/2$. This scaled $W$-set is sometimes but not always quasiparabolic.
\end{example}

We restate \cite[Corollary 2.13]{RV} as the  lemma which follow this definition:

\begin{definition} 
An element $x $ in a scaled $W$-set  $X$ is \defn{$W$-minimal} (respectively, \defn{$W$-maximal}) if $\h(sx) \geq \h(x)$ (respectively, $\h(sx) \leq \h(x)$) for all $s \in S$.
\end{definition}

\begin{lemma}[Rains and Vazirani \cite{RV}] \label{minimal-lem} 
If a scaled $W$-set is quasiparabolic, then each of its orbits contains at most one $W$-minimal element and at most one $W$-maximal element. These elements, if they exist, have  minimal (respectively, maximal) height in their $W$-orbits.
\end{lemma}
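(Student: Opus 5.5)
We argue directly from (QP1) and (QP2); the key step is an exchange-type lemma.

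\emph{Claim.} Let $x$ be $W$-minimal. Then for every $w\in W$ we have $\h(wx)-\h(x)\ge 0$, and more precisely, for every reduced word $w=s_1\cdots s_k$ the sequence $\h(s_j\cdots s_k x)$ is weakly increasing as $j$ decreases.

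We prove the claim by induction on $k=\ell(w)$. For $k=0$ there is nothing to show. Suppose the claim holds for $w'=s_2\cdots s_k$, so the heights along $w'x$ never decrease, and suppose $\h(s_1w'x)<\h(w'x)$. Choose the largest index $j$ at which a strict increase occurs, i.e.\ $\h(s_j y)>\h(y)$ with $y=s_{j+1}\cdots s_kx$. Set $u=s_1\cdots s_{j-1}$ (so $u=1$ if $j=1$) and define the reflection $r=u s_j u^{-1}$. If no strict increase occurs at all, then by (QP1) every step fixes the current element, so $w'x=x$. In that case $\h(s_1x)<\h(x)$ contradicts the minimality of $x$.

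In general, the plan is to apply (QP2) to the pair $(r, u^{-1}w'x)$ together with $s_1$, peeling letters off one at a time. This is exactly the Rains--Vazirani argument. At each stage we transport the descent backwards, using the conjugated reflection $s_i r s_i$. Whenever equality $rz=sz$ is forced by (QP2), the heights match up and the descent moves to an earlier position. Eventually the descent arrives at $x$ itself and contradicts the minimality of $x$. Alternatively, we obtain an equality $s_1 w' x = w'' x$ with $\ell(w'')<k$, and the induction hypothesis then gives $\h(s_1w'x)\ge \h(x)$ directly.

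The weaker conclusion $\h(wx)\ge\h(x)$ is all we need. We prove it by induction on $\ell(w)$, allowing the bookkeeping to replace $wx$ by $w''x$ with $w''$ shorter. I expect this step to be the main obstacle: keeping track of which reflection to use in (QP2), and checking that (QP1) excludes the equal-height cases.

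Given the claim, the lemma follows at once. Every element of the orbit $Wx$ has height at least $\h(x)$, so $x$ has minimal height in its orbit. Now suppose $x'$ is another $W$-minimal element of the same orbit. Then $\h(x')\ge\h(x)$ and $\h(x)\ge\h(x')$, so $\h(x)=\h(x')$. Write $x'=wx$ and choose $w$ of minimal length. If $w\neq 1$, write $w=sv$ with $\ell(v)<\ell(w)$. By the monotonicity part of the claim applied at $x$, the step from $vx$ to $svx$ cannot strictly decrease, and it cannot strictly increase since $\h(x')$ is minimal. Hence it has equal height, and (QP1) gives $svx=vx$, which contradicts the minimality of $\ell(w)$. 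Therefore $w=1$ and $x'=x$.

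The statements for $W$-maximal elements follow by the same argument with all inequalities reversed. Alternatively, we apply the minimal case to $(X,-\h)$. The axioms (QP1) and (QP2) are symmetric enough for this: (QP2) for $-\h$ follows from (QP2) for $\h$ applied to $(r, rx, s)$, since $r^2=1$.
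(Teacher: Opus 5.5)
There is a genuine gap, and it is the step you flag as the main obstacle. That step carries essentially the whole content of the lemma; the paper itself only cites Rains--Vazirani here.

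First, the strong form of your Claim is false, already in this paper's main example. In $\cF_4$ take $x=(1,2)(3,4)$ and the reduced word $s_2s_1s_3s_2$, which is the permutation $3412$ and commutes with $x$. Applying its letters from right to left gives $(1,3)(2,4)$, $(1,4)(2,3)$, $(1,3)(2,4)$, $x$, with heights $1,2,3,2,1$. Your uniqueness paragraph invokes ``the monotonicity part of the claim'' to rule out a final descent, so it rests on a statement that is unproved and in general false.

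Second, the mechanism you sketch for the weak claim does not produce a contradiction. Transporting the descent $\h(s_1w'x)<\h(w'x)$ back along the path with (QP2) gives either an exchange or, when no swap occurs, an inequality $\h(\rho x)<\h(x)$ for the conjugated reflection $\rho=w'^{-1}s_1w'$. This $\rho$ is generally not simple, and $W$-minimality says nothing about non-simple reflections. So ``the descent arrives at $x$'' contradicts nothing, and you never show the no-swap case is impossible. (QP2) only propagates the relative order of a pair; a contradiction has to come from knowing that order at both ends.

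The missing idea is to transport a pair whose connecting reflection becomes simple at the $x$ end. Your reflection $us_ju^{-1}$ is the right one, but it must be used as follows.

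\begin{itemize}
\item Let $w=s_1\cdots s_k$ have minimal length with $\h(wx)<\h(x)$; then $k\ge2$. For $0\le j\le k-1$ set $y_j=s_{j+1}\cdots s_kx$ and $q_j=s_{j+1}\cdots s_{k-1}x$.
\item Each $q_j$ is the image of $y_j$ under a reflection, $y_0=wx$, and $(y_{k-1},q_{k-1})=(s_kx,x)$.
\item Minimality of $k$ gives $\h(q_0)\ge\h(x)>\h(y_0)$. At the other end, $s_kx\neq x$, $W$-minimality and (QP1) give $\h(y_{k-1})>\h(q_{k-1})$.
\item By (QP1) the two entries never have equal height, since otherwise $y_0=q_0$. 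So there is a step $j$ with $y_{j-1}$ lower and $y_j$ higher.
\item (QP2) then forces $q_{j-1}=y_j$, i.e.\ $wx=s_1\cdots\widehat{s_j}\cdots\widehat{s_k}\,x$. This is a word of length $k-2$, contradicting minimality of $k$.
\end{itemize}

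The same comparison gives uniqueness. Let $x'=wx$ be a second minimal element, with $w$ of minimal length $k$; the case $k=1$ is excluded by (QP1). Now $\h(q_0)>\h(x')$, because $x'$ has minimal height and $q_0\neq x'$, and the argument above yields a shorter word for $x'$. No monotonicity statement is needed.

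Your reduction of the maximal case to $(X,-\h)$ is correct.
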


\begin{remark}
This property is enough   to nearly classify the quasiparabolic conjugacy classes in the symmetric group. Assume that $W = S_n$, $S = \{ s_i = (i,i+1) : i =1,\dots,n-1\}$ with the height function $\h = \ell/2$.
 Suppose $\cK \subset S_n$ is a quasiparabolic conjugacy class.  Since $\cK$ is finite, it contains a unique $W$-minimal element by Lemma \ref{minimal-lem}. $\cK$ consists of involutions since every permutation is conjugate in $S_n$ to its inverse. There are $1+\lfloor n/2 \rfloor$ such conjugacy classes: $\{1\}$ and the conjugacy classes of $s_1s_3s_5\cdots s_{2k-1}$ for positive integers $k$ with $2k\leq n$. $\{1\}$ is trivially quasiparabolic, while the conjugacy class of $s_1s_3s_5\cdots s_{2k-1}$ is quasiparabolic only if $2k =n$, since otherwise  $s_2s_4s_6\cdots s_{2k}$ belongs to the same conjugacy class but has the same (minimal) length.
The only remaining conjugacy class, consisting of the fixed-point-free involutions in $S_n$ for $n$ even, is  quasiparabolic by \cite[Theorem 4.6]{RV}. We denote it as $\cF_n$.
\end{remark}

For the rest of this section, $(X,\h)$ denotes  a fixed quasiparabolic $W$-set.
%
%

Then we have this definition from \cite[\S5]{RV}, which attaches to $X$ a certain partial order:

 \begin{definition}\label{bruhat-def}
 The \defn{Bruhat order} on 
 a quasiparabolic $W$-set 
 $X$ is  the weakest partial order $\leq$  with
$ x \leq rx $ for all $x \in X $ and $r\in \R$ with $ \h(x) \leq \h(rx)$.
\end{definition}

It follows immediately from the definition that if $x,y \in X$ then $x < y$ implies $\h(x) < \h(y)$.  Rains and Vazirani develop several other general properties of the Bruhat order in \cite[Section 5]{RV}. 
Among these properties, we only quote the following lemma (which appears as \cite[Lemma 5.7]{RV}) for use later:
 
\begin{lemma}[Rains and Vazirani \cite{RV}]\label{bruhat-lem}
Let $x,y \in X$  such that $x\leq y$ and $s \in S$. Then
 \[sy \leq y \ \Rightarrow\ sx \leq y
 \qquand
 x\leq sx
 \ \Rightarrow\ x \leq sy.
 \]
\end{lemma}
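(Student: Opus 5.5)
We argue directly from Definitions~\ref{qp-def} and~\ref{bruhat-def}, treating the two implications separately. The second follows from the first (or by the same method), so the main work is the first. Since the Bruhat order is the transitive closure of the relations $x \le rx$ with $\h(x)\le \h(rx)$, $r\in\R$, we use induction on the length of a chain
\[x = x_0 < x_1 < \cdots < x_m = y,\qquad x_{i} = r_i x_{i-1},\ \h(x_{i-1})<\h(x_i).\]
Steps with equal height are trivial by (QP1), so every step strictly increases height. It therefore suffices to prove a single-step lemma and then iterate.

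\emph{Single-step lemma.} Let $x < rx$ with $r\in\R$, $\h(rx)>\h(x)$, and let $s\in S$. Then either $sx = rx$, or $sx$ and $srx$ are again related by the reflection $srs\in\R$, since $srx = (srs)(sx)$. The direction of this relation is controlled by (QP2). Suppose $\h(srx)<\h(rx)$. Since $|\h(sx)-\h(x)|\in\{0,1\}$ and heights differ by integers along a $W$-orbit, (QP2) applied to $(r,x,s)$ gives the following: either $rx = sx$, or $\h(srx)\ge \h(sx)$, in which case $sx \le srx$. Together with $srx<rx$ this yields $sx\le rx$. If instead $\h(srx)\ge \h(rx)$, the descent hypothesis $sy\le y$ will be used at the level of $y$ rather than $rx$.

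\emph{Proof of the first implication.} Assume $sy\le y$, meaning $\h(sy)\le\h(y)$. We induct on $m$.
\begin{itemize}
\item If $m=0$, then $sx=sy\le y$.
\item If $m\ge1$, write $y=r x'$ with $x'=x_{m-1}$ and $x\le x'$. There are two cases.
\begin{itemize}
\item Case $\h(sx')\le\h(x')$. By induction $sx\le x'<y$.
\item Case $\h(sx')>\h(x')$. Apply the single-step lemma to the pair $x'<y$. Either $sx'=y$, so that $x'=sy$, or $sx'\le sy$ via $srs$, after showing $\h(sx')\le\h(sy)$ using (QP2) with the roles arranged so that the hypothesis $\h(sy)<\h(y)$ is available.
\end{itemize}
Now $x\le x'$ and $x'\le sx'$. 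We need $sx\le sx'$, which is a ``lifting'' statement. This is the heart of the argument, so we prove the two implications \emph{simultaneously} by induction on $\h(y)-\h(x)$, using the second implication at smaller height difference. The second implication, for $x\le sx$ and $x\le y$, is deduced symmetrically by the single-step lemma applied to the first step $x<x_1$ of the chain.
\end{itemize}
Combining these gives $sx\le sx'\le sy\le y$, or $sx\le x'=sy\le y$.

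\emph{Main obstacle.} The delicate point is the case analysis in the single-step lemma, where (QP2) must be invoked with exactly the right triple. In particular we must check that $\h(sx)-\h(x)=0$ can only occur when $sx=x$. This follows from (QP1), because $s\in\R$.

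The mutual induction between the two implications must also decrease a well-founded quantity. We use $\h(y)-\h(x)\in\mathbb Z_{\ge0}$, which is legitimate because heights in a $W$-orbit differ by integers. For the first attempt we follow Rains--Vazirani's route: first establish the ``Z-property'' (lifting) for a single reflection, then propagate it along chains.
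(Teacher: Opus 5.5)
The paper does not prove this lemma; it quotes it from Rains--Vazirani. So this review judges your argument on its own.

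\textbf{Verdict: there is a genuine gap.} Your ingredients are the right ones: induction along a Bruhat chain, plus the one-step consequence of (QP2). That consequence is: if $\mathrm{ht}(rx)>\mathrm{ht}(x)$, then either $sx=rx$ or $sx\le srx$, via the reflection $srs$. It holds unconditionally, so your case split on $\mathrm{ht}(srx)$ versus $\mathrm{ht}(rx)$ is unnecessary, as is the remark about using $sy\le y$ ``at the level of $y$''.

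\textbf{Where the first implication breaks.} You peel off the \emph{last} step $x'<y=rx'$. In the case $x'<sx'$ this leaves you needing the lifting statement $x\le x'<sx'\Rightarrow sx\le sx'$, which you never prove. The mutual induction you propose cannot supply it:
\begin{itemize}
\item The second implication only yields inequalities $a\le sb$ under the hypothesis $a\le sa$. In the relevant case $x<sx$ it gives $x\le sx'$, never $sx\le sx'$.
\item Applying the first implication to the pair $(x,sx')$ is circular exactly in the subcase $sx'=y$, since that pair is then $(x,y)$ itself.
\end{itemize}
Your closing alternative $sx\le x'=sy\le y$ is also wrong. In that subcase you only know $x\le x'$, and for $x=x'$ one has $sx=y\not\le x'$.

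\textbf{The repair} is to choose the end of the chain at which you induct.
\begin{itemize}
\item For the first implication, induct from the \emph{bottom}. Write $x<x_1=rx\le y$. The inductive hypothesis (with $sy\le y$ unchanged) gives $sx_1\le y$. The one-step fact then gives either $sx=x_1\le y$ or $sx\le sx_1\le y$.
\item For the second implication, induct from the \emph{top}. Write $x\le x'<y=rx'$. The one-step fact gives either $sx'=y$, so $x\le x'=sy$, or $sx'\le sy$. In the latter case the inductive hypothesis $x\le sx'$ gives $x\le sy$.
\end{itemize}
No lifting property is needed.

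The bottom-up induction you sketch for the second implication has the same defect. If $sx=x_1$, then $sx_1=x<x_1$, so the hypothesis $x_1\le sx_1$ fails and the induction cannot be applied to $(x_1,y)$. Running it from the top is what makes it work. Equivalently, you can prove the two dichotomies ``$sx\le y$ or $sx\le sy$'' and ``$x\le sy$ or $sx\le sy$'' for all $x\le y$ by these inductions. Both implications are then immediate.
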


\begin{lemma}[Rains and Vazirani \cite{RV}]\label{reflec-lem}
Let $x \in \cF_n$ and let $r, r'$ be reflections such that $rxr = r'xr' \neq x$. Then $r' \in \{r,xrx \}$.
\end{lemma}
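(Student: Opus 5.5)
We have to prove: if $x\in\cF_n$ and $r,r'$ are reflections (transpositions) with $rxr=r'xr'\neq x$, then $r'\in\{r,xrx\}$. The plan is a direct combinatorial argument in $S_n$, using only that $x$ is a fixed-point-free involution, i.e.\ a perfect matching on $\{1,\dots,n\}$.

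First, normalize $r$. Write $r=(a,b)$ with $a<b$. Since $rxr\neq x$, $r$ does not commute with $x$. As $x$ has no fixed points, $x(a)\neq a$ and $x(b)\neq b$, so non-commuting forces $x(a)\neq b$. Thus $x$ contains two distinct $2$-cycles $(a,c)$ and $(b,d)$, where $c=x(a)$, $d=x(b)$, and $\{a,b,c,d\}$ are four distinct points.

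Next, compute conjugation explicitly. Conjugating by $r$ relabels $a\leftrightarrow b$ in the cycle notation of $x$. Hence $rxr$ agrees with $x$ outside $\{a,b,c,d\}$, and on these four points it has the cycles $(b,c)(a,d)$ in place of $(a,c)(b,d)$. Also $xrx=(x(a),x(b))=(c,d)$, and conjugating by $(c,d)$ gives the same result $(a,d)(b,c)$, which confirms that $xrx$ is a valid choice.

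Now let $r'=(p,q)$ with $r'xr'=rxr\neq x$. By the same reasoning, $r'xr'$ differs from $x$ exactly in the two $2$-cycles of $x$ containing $p$ and $q$: it replaces $(p,x(p))(q,x(q))$ by $(q,x(p))(p,x(q))$ and fixes every other cycle. Comparing the cycles in which $r'xr'$ and $x$ differ, the set of removed cycles must be $\{(a,c),(b,d)\}$. So $\{p,q\}$ contains one element from $\{a,c\}$ and one from $\{b,d\}$, giving four candidates: $(a,b)$, $(a,d)$, $(c,b)$, $(c,d)$. Conjugating by $(a,d)$ swaps $a$ and $d$, producing $(d,c)(b,a)$, which is not $(a,d)(b,c)$. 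The case $(c,b)$ is symmetric and also fails. The remaining candidates are $(a,b)=r$ and $(c,d)=xrx$.

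The main obstacle is only careful bookkeeping in the step that identifies which cycles change: one must note that $r'xr'\neq x$ forces $p,q$ to lie in distinct cycles of $x$, so that exactly two cycles change. After that, the remaining case check is routine.
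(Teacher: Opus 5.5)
Your proof is correct and complete. Each step checks out:
\begin{itemize}
\item the reduction to $x(a)\neq b$;
\item the description of $rxr$ as the matching obtained from $x$ by replacing the arcs $(a,c),(b,d)$ with $(a,d),(b,c)$;
\item the observation that $r'xr'\neq x$ forces the two endpoints of $r'$ into distinct arcs, so that exactly two arcs of $x$ disappear and $\{p,q\}$ must meet both $\{a,c\}$ and $\{b,d\}$;
\item the elimination of $(a,d)$ and $(c,b)$, each of which produces $(a,b)(c,d)$ instead of $(a,d)(b,c)$.
\end{itemize}

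The paper gives no argument of its own for this lemma; it simply quotes it from Rains and Vazirani. Your route is therefore a self-contained, purely type-A substitute for that citation, using nothing beyond the perfect-matching description of $\cF_n$.

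It also makes explicit both the new arcs $(a,d),(b,c)$ and the second reflection $xrx=(c,d)$. That is exactly the form in which the lemma is used later, for instance in Propositions~\ref{vert-stair-prop} and~\ref{difmole-prop}. There $s_1us_1=s_kus_k$ is converted into $us_1u=s_k$ and then read off as $u(2)=k$.

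The citation, in exchange, ties the statement to the general quasiparabolic framework the paper relies on elsewhere, rather than to a hand computation specific to fixed-point-free involutions.
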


    \subsection{Quasiparabolic Hecke algebra}
    Let $\mathcal{A}=\mathbb{Z}\left[v, v^{-1}\right]$. The Iwahori-Hecke algebra of $(W, S)$ is the $\mathcal{A}$-algebra with basis $\left\{H_w: w \in W\right\}$ satisfying
\begin{equation}
    H_s H_w=\left\{\begin{array}{ll}
H_{s w} & \text { if } \ell(s w)>\ell(w) \\
H_{s w}+\left(v-v^{-1}\right) H_w & \text { if } \ell(s w)<\ell(w)
\end{array} \quad \text { for } s \in S \text { and } w \in W .\right.
\end{equation}

The unit of this algebra is $H_1=1$. There is a unique ring involution of $\mathcal{H}$, written $h \mapsto \bar{h}$ and called the bar operator, such that $\bar{v}=v^{-1}$ and $\overline{H_s}=H_s^{-1}=H_s-\left(v-v^{-1}\right)$ for all $s \in S$. More generally, an $\mathcal{H}$-compatible bar operator for a module $\mathcal{A}$ is a $\mathbb{Z}$-linear map $\mathcal{A} \rightarrow \mathcal{A}$, also written as $a \mapsto \bar{a}$, such that $\overline{h a}=\bar{h} \cdot \bar{a}$ for all $h \in \mathcal{H}$ and $a \in \mathcal{A}$.

As the generalization of $\mathcal{H}$-module, the $\mathcal{A}$-module $\mathcal{M}$ introduced by Marberg \cite{Marberg} is spanned by the basis $\{\mathcal{M}_x,x\in X\}$ satisfying the following  multiplication rule.
\begin{theorem}[\cite{RV}, Thm. 7.1 and \cite{Marberg} Thm. 3.16]
There is a unique $\mathcal{H}$-module structure on $\mathcal{M}$ such that for all $ s \in S $ and $ x \in X $
	\begin{equation}
	    H_s M_x =\left\{
	\begin{aligned}
		&M_{sx}  &\text{\, if \,} \mathrm{ht}(sx)>\mathrm{ht}(x), \\
		&M_{sx}+(v-v^{-1})M_x  &\text{\, if \,} \mathrm{ht}(sx)<\mathrm{ht}(x), \\
		&vM_x   &\text{\, if \,}  \mathrm{ht}(sx)=\mathrm{ht}(x).	
	\end{aligned}
	\right.
	\end{equation}
This $\mathcal{H}$-module have the following additional properties:\\
  (1) $\mathcal{M}$ has a unique $\mathcal{H}$-compatible bar operator with $\overline{M}_{x_1} = M_{x_1}$. \\
  (2) $\mathcal{M}$ has a unique basis $\{\underline{M}_w : w \in \cF_n\}$, with
  \begin{center}
	$\underline{M}_x=\overline{\underline{M}_x} \in \sum_{w<x}v^{-1}\mathbb{Z}[v^{-1}]\cdot M_w$  \end{center}
\end{theorem}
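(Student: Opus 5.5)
I will prove the statement in two stages: first the existence and uniqueness of the $\mathcal{H}$-module structure, then the bar operator (1) and the canonical basis (2). For existence I define operators $T_s$ on the free $\mathcal{A}$-module $\mathcal{M}$ by the three displayed formulas. It then suffices to check that $T_s$ satisfies the quadratic relation $(T_s - v)(T_s + v^{-1}) = 0$ and that the $T_s$ satisfy the braid relations. Uniqueness is automatic, since $H_s$ generate $\mathcal{H}$ and the formula prescribes $H_s M_x$ on a basis.

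The quadratic relation is a rank-two check on each $\{x, sx\}$. If $sx = x$ (equivalently, by (QP1) with $r = s$, $\h(sx) = \h(x)$), the operator is scalar $v$. Otherwise $T_s$ acts on $\mathrm{span}\{M_x, M_{sx}\}$ by the usual Hecke matrix, which satisfies the relation. The braid relations reduce to the dihedral orbits of $\langle s,t\rangle$ on $X$. I restrict $X$ to such an orbit; the restriction is still quasiparabolic for the parabolic subgroup. By Lemma \ref{minimal-lem}, each such orbit has a unique minimal element $x_0$, and its elements are described by heights along the two alternating chains from $x_0$. I will use (QP2) to show that the orbit is isomorphic, as a scaled set, to a quotient $W_{st}/W_J$ with $J$ empty or a single generator, or that it has the "fixed at both ends" shape. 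In each case the braid relation holds because it holds in the corresponding parabolic induced module (or its sign or trivial twist). This classification of dihedral quasiparabolic orbits is the main obstacle. I would attack it by walking up the chain $x_0, sx_0, tsx_0, \dots$ and using (QP2) to rule out a height decrease before the maximal element.

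For (1), I define the bar operator by $\overline{M_x} := \overline{H_w}\, M_{x_1}$, where $x = w x_1$ is written by a reduced chain from the minimal element $x_1$ (here for $\cF_n$). To see that this is well defined, I will show that $\mathrm{Stab}(x_1)$ acts with $H_s M_{x_1} = v M_{x_1}$ for the simple stabilizing generators. The bar involution then intertwines $\mathcal{M} \cong \mathcal{H} \otimes_{\mathcal{H}_J} (\text{rank one})$ appropriately. Compatibility $\overline{hm} = \bar h\, \bar m$ follows from the definition, and uniqueness holds because $M_{x_1}$ generates $\mathcal{M}$.

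For (2), I first show unitriangularity, $\overline{M_x} \in M_x + \sum_{w<x} \mathcal{A} M_w$, by induction on height, using $H_s^{-1} = H_s - (v - v^{-1})$ together with Lemma \ref{bruhat-lem} to keep the lower terms Bruhat-below. From there, the standard Kazhdan–Lusztig/Lusztig lemma argument (induction on $\h(x)$, solving $r - \bar r = $ given coefficients in $v^{-1}\mathbb{Z}[v^{-1}]$) produces unique elements $\underline{M}_x$. This gives the basis with $\underline{M}_x = \overline{\underline{M}_x} \in M_x + \sum_{w<x} v^{-1}\mathbb{Z}[v^{-1}]\, M_w$.
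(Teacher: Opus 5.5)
The paper does not prove this statement; it imports it from Rains--Vazirani (the module structure) and Marberg (parts (1) and (2)). Several of your steps are standard and fine: the quadratic-relation check, the reduction of the braid relations to rank-two orbits, the unitriangularity of $\overline{M_x}$ via Lemma~\ref{bruhat-lem}, and the final Lusztig-lemma step.

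Your rank-two classification, however, is incomplete. In $\cF_4$, the $\langle s_1,s_3\rangle$-orbit $\{(1,3)(2,4),(1,4)(2,3)\}$ is interchanged by both $s_1$ and $s_3$. Its stabilizer is $\langle s_1s_3\rangle$, so it is neither a quotient $W_{st}/W_J$ nor fixed at an end. In general, a transitive quasiparabolic $I_2(m)$-set is pulled back from an $I_2(k)$-set with $k\mid m$, and the case $k<m$ is missing from your list. The braid relation still holds there: the operators satisfy the length-$k$ braid relation, which implies the length-$m$ one. So this point is repairable, but not by appealing to a parabolic induced module.

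The genuine gap is in (1). Your definition $\overline{M_x}:=\overline{H_w}M_{x_1}$ is justified by the claim $\mathcal{M}\cong\mathcal{H}\otimes_{\mathcal{H}_J}(\text{rank one})$, with $J$ the simple reflections fixing $x_1$. That claim is false here. The stabilizer of $x_1=s_1s_3\cdots s_{n-1}$ is its centralizer $S_2\wr S_p$, of order $2^pp!$, while $W_J\cong(\mathbb{Z}/2)^p$. The induced module has rank $n!/2^p$, but $\mathcal{M}$ has rank $(n-1)!!$, so $\mathcal{M}$ is a proper quotient.

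Well-definedness therefore requires the bar involution of $\mathcal{H}$ to preserve the \emph{entire} annihilator $\mathrm{Ann}_{\mathcal{H}}(M_{x_1})$, not just the left ideal generated by the $H_s-v$ with $s\in J$. Already for $n=4$ this bites. The two height-increasing chains $x_1\xrightarrow{s_2}(1,3)(2,4)\xrightarrow{s_1}(1,4)(2,3)$ and $x_1\xrightarrow{s_2}(1,3)(2,4)\xrightarrow{s_3}(1,4)(2,3)$ give $w=s_1s_2$ and $w'=s_3s_2$. These differ by $s_2s_1s_3s_2\notin W_J$. The relation $(H_{s_1}-H_{s_3})H_{s_2}M_{x_1}=0$ does not lie in that left ideal, so you must separately show $(H_{s_1}^{-1}-H_{s_3}^{-1})H_{s_2}^{-1}M_{x_1}=0$. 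The compatibility $H_s\overline{M_x}=v\overline{M_x}$ whenever $sx=x$ with $x$ non-minimal is likewise not automatic. For example, $(H_{s_2}-v)H_{s_1}H_{s_2}$ kills $M_{x_1}$ but is nonzero on $1\otimes1$ in the induced module.

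Proving that the annihilator is bar-stable, for a quasiparabolic set whose stabilizers are not parabolic, is exactly the substantive content that the cited references supply. Your proposal replaces it with an identification that does not hold. Since uniqueness, involutivity of the bar map, and the canonical basis in (2) all rest on (1), they inherit this gap.
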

Recall that $ m_{x,y} $ for $x,y \in X $ are the polynomials in $\mathbb{Z}[v^{-1} ]$ such that
\begin{equation}
     \underline{M}_y= \sum_{x \in X}m_{x,y}M_x.
\end{equation}	
 Write  $\mu_\m(x,y)$ for the coefficients of $v^{-1}$ in $m_{x,y}$. Define $\delta_{x,y}=1$ for $x=y$ and $\delta_{x,y}=0$ for $x\neq y$.

    \begin{lemma}[Marberg \cite{Marberg}, Lemma 3.25]\label{Marberg-lem}
	Let $x,y \in X$ with $x<y$. If there exists $s\in S $ with $sy \leq y $ and $sx >x$, then $\mu_\m(x,y)=\delta_{sx,y}$.
	\end{lemma}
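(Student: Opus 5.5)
The plan is to follow the standard Kazhdan--Lusztig argument, adapted to the quasiparabolic module $\mathcal{M}$: compute $H_s\underline{M}_x$ in two ways and compare coefficients. We may assume $x<y$, $sy\le y$ and $sx>x$. Since $x<y$ forces $\h(x)<\h(y)$, and $sx>x$ means $\h(sx)=\h(x)+1$, we also have $\h(sx)\le \h(y)$. Before starting, we record the usual multiplication formula
\[
(H_s+v^{-1})\underline{M}_x=\underline{M}_{sx}+\sum_{z<x,\ sz<z}\mu_\m(z,x)\underline{M}_z ,
\]
valid when $sx>x$. The proof is the standard one. The left side is bar-invariant, because $\overline{H_s+v^{-1}}=H_s+v^{-1}-(v-v^{-1})-v^{-1}+v=H_s+v^{-1}$. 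One then expands it using the multiplication rule for $H_sM_w$ and subtracts $\underline{M}_{sx}$ together with the $v^0$ corrections. The term $\mathrm{ht}(sw)=\mathrm{ht}(w)$, where $H_sM_w=vM_w$, needs care. By (QP1) this case means $sw=w$, and then $(H_s+v^{-1})m_{w,x}M_w=(v+v^{-1})m_{w,x}M_w$. Its $v^0$ coefficient is again $\mu_\m(w,x)$, so such $w$ appear among the correction terms when $sw\le w$ is read as ``$sw<w$ or $sw=w$''. I will adjust the index set accordingly.

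With this formula in hand, we look at the coefficient of $M_y$ on both sides, with $\underline{M}_y$ written in the basis. The main route is the following. Since $sy\le y$, Lemma~\ref{bruhat-lem} gives $sx\le y$. Write $m_{x,y}=\mu_\m(x,y)v^{-1}+\cdots$. One then proves the standard recursion: if $sy<y$, then $\underline{M}_y$ is determined from $(H_s+v^{-1})\underline{M}_{sy}$ minus correction terms, and the coefficients satisfy $m_{x,y}=v^{-1}m_{x,y}'+\ldots$ with a parity shift. Concretely, one shows $m_{sx,y}=v^{-1}m_{x,y}$ fails in general. It is cleaner to show instead that $m_{x,y}=v^{-1}m_{sx,y}$ whenever $sy<y$ and $sx>x$.

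This identity comes from the fact that $H_s\underline{M}_y=-v^{-1}\underline{M}_y$ when $sy<y$. That fact holds because $\underline{M}_y$ lies in the image of $H_s+v^{-1}$ applied to bar-invariant elements, and $(H_s-v)(H_s+v^{-1})=0$ gives $(H_s-v)\underline{M}_y=0$. Note the sign convention: the eigenvalue that works must be checked, and it is $H_s\underline{M}_y=v\underline{M}_y$. Comparing the coefficients of $M_x$ in $H_s\underline{M}_y=v\underline{M}_y$, using the pair $\{x,sx\}$ with $sx>x$, gives $m_{x,y}+ (v-v^{-1})m_{sx,y}\cdot 0\ldots$, which reduces to $m_{x,y}=v^{-1}m_{sx,y}$. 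Here $m_{sx,y}\in\mathbb{Z}[v^{-1}]$, and $m_{y,y}=1$.

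We now finish by cases. If $sx\ne y$, then $sx<y$ and $m_{sx,y}\in v^{-1}\mathbb{Z}[v^{-1}]$, so $m_{x,y}\in v^{-2}\mathbb{Z}[v^{-1}]$ and $\mu_\m(x,y)=0$. If $sx=y$, then $m_{x,y}=v^{-1}$ and $\mu_\m(x,y)=1$.

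The case $sy=y$ has to be excluded separately, or handled with eigenvalue $v$. In that case $H_sM_y=vM_y$, so $H_s\underline{M}_y=v\underline{M}_y$ still follows, and the same comparison applies.

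The main obstacle is proving $H_s\underline{M}_y=v\underline{M}_y$ in the quasiparabolic setting, including the fixed case $sy=y$. I would prove it by induction on height, using uniqueness of the canonical basis from the theorem above: the element $v^{-1}(H_s+v^{-1})\underline{M}_y$ minus suitable lower terms is bar-invariant with the right triangularity, so it must equal $\underline{M}_y$.
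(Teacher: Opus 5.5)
\textbf{There is a gap: you never establish $H_s\underline{M}_y=v\underline{M}_y$ for $sy\le y$, and your sketch fails in the fixed case $sy=y$.}

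Your reduction itself is correct, and it is exactly how the lemma follows from the recurrence the paper quotes as Marberg's Cor.~3.17. Granting the eigenvector property, the $M_x$-coefficient of $H_s\underline{M}_y$ is $m_{sx,y}$. So $m_{sx,y}=v\,m_{x,y}$, i.e.\ $m_{x,y}=v^{-1}m_{sx,y}$, and your final split into $sx=y$ versus $sx\neq y$ is right.

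The problem is the eigenvector property itself, which you flag as the main obstacle but do not prove. In the case $sy=y$ there are two failures:
\begin{itemize}
\item Passing from $H_sM_y=vM_y$ to $H_s\underline{M}_y=v\underline{M}_y$ ignores the lower terms $m_{w,y}M_w$, which are not eigenvectors.
\item Your uniqueness argument cannot work. The element $v^{-1}(H_s+v^{-1})\underline{M}_y$ is not bar-invariant (its bar is $v(H_s+v^{-1})\underline{M}_y$), and its $M_y$-coefficient is $1+v^{-2}$, not $1$.
\end{itemize}
The case $sy<y$ also depends on the fixed case. In $\underline{M}_y=(H_s+v^{-1})\underline{M}_{sy}-\sum_{u<sy,\ su\le u}\mu_\m(u,sy)\underline{M}_u$, the correction terms include $u$ with $su=u$. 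Such $\underline{M}_u$ is not in the image of $H_s+v^{-1}$ over $\mathcal A$, since its $M_u$-coefficient $1$ is not divisible by $v+v^{-1}$. So your induction needs $(H_s-v)\underline{M}_u=0$ for fixed $u$ of lower height, which is the missing case.

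To close the gap, you can simply cite the quoted Cor.~3.17: $\widetilde m_{x,y}=\widetilde m_{sx,y}$ whenever $sy\le y$. With $\h(sx)=\h(x)+1$ this is exactly $m_{x,y}=v^{-1}m_{sx,y}$.

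Alternatively, prove the fixed case directly as follows.
\begin{enumerate}
\item For $sy=y$ put $\eta=(H_s-v)\underline{M}_y$. It is bar-invariant because $\overline{H_s-v}=H_s-v$.
\item Expanding gives $\eta=\sum_{sw>w}c_w\,(M_w-v^{-1}M_{sw})$, where $c_w=m_{sw,y}-v\,m_{w,y}\in\mathbb Z[v^{-1}]$. Since $sw\neq y$, the constant term of $c_w$ is $-\mu_\m(w,y)$.
\item Hence $\eta+\sum_{sw>w}\mu_\m(w,y)\underline{M}_w$ is bar-invariant with all coefficients in $v^{-1}\mathbb Z[v^{-1}]$, so it vanishes.
\item Apply $H_s+v^{-1}$. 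It kills $\eta$, because $(H_s+v^{-1})(H_s-v)=0$. Expand the right side with your multiplication formula, whose derivation does not use the eigenvector property.
\item Take $w_1$ of maximal height with $sw_1>w_1$ and $\mu_\m(w_1,y)\neq 0$. The coefficient of $\underline{M}_{sw_1}$ is then $-\mu_\m(w_1,y)$, because any $w>sw_1$ would have larger height. This is a contradiction.
\end{enumerate}
Therefore $\eta=0$. This already gives the lemma when $sy=y$, and your induction for $sy<y$ then goes through.
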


Define $\widetilde{m}_{x,y}= v^{\mathrm{ht}(y)-\mathrm{ht}(x)}m_{x,y}  $  for the simpler formula.

    \begin{lemma}[Marberg \cite{Marberg}, Cor. 3.17]
Let $x, y \in X$ and $s \in S$ .\\
If $s y=y$ then $\widetilde{m}_{x, y}=\widetilde{m}_{s x, y}$ and if $s y<y$ then
\begin{equation}
    \widetilde{m}_{x, y}=\widetilde{m}_{s x, y}=\left\{\begin{array}{ll}
\widetilde{m}_{x, s y}+v^2 \cdot \widetilde{m}_{s x, s y} & \text { if } s x>x \\
v^2 \cdot \widetilde{m}_{x, s y}+\widetilde{m}_{s x, s y} & \text { if } s x \leq x
\end{array}\right\}-\sum_{\substack{x<t<s y \\
s t \leq t}} \mu_\m(t, s y) \cdot v^{\mathrm{ht}(y)-\mathrm{ht}(t)} \cdot \widetilde{m}_{x, t} .
\label{rpm}
\end{equation}
\end{lemma}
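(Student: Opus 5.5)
The plan is to derive the recurrence from a bar-invariant element and the uniqueness of the canonical basis in part (2) of the theorem. We first handle $sy<y$. Put $y'=sy$, so that $sy'>y'$ and $\h(y)=\h(y')+1$. The element $H_s+v^{-1}$ is bar-invariant, since $\overline{H_s}=H_s-(v-v^{-1})$. Hence
\[ C=(H_s+v^{-1})\underline{M}_{y'} \]
is bar-invariant by $\mathcal{H}$-compatibility of the bar operator on $\mathcal{M}$. The multiplication rule gives three cases for $w\in X$:
\[ (H_s+v^{-1})M_w=\begin{cases} M_{sw}+v^{-1}M_w & \text{if } \h(sw)>\h(w),\\ M_{sw}+vM_w & \text{if } \h(sw)<\h(w),\\ (v+v^{-1})M_w & \text{if } sw=w. \end{cases} \]
Expanding $C=\sum_w m_{w,y'}(H_s+v^{-1})M_w$, the coefficient of $M_y$ is $m_{y',y'}=1$.

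The next step is to read off which coefficients of $C$ fail to lie in $v^{-1}\mathbb{Z}[v^{-1}]$. Because $m_{w,y'}\in v^{-1}\mathbb{Z}[v^{-1}]$ for $w\neq y'$, the only offending contributions are the constant terms $\mu_\m(t,y')$. These arise from $v\cdot m_{t,y'}$ with $st<t$ or $st=t$, that is, with $st\le t$. Each such $\underline{M}_t$ is bar-invariant, with leading term $M_t$ and lower terms in $v^{-1}\mathbb{Z}[v^{-1}]$. Therefore
\[ C-\sum_{t<y',\ st\le t}\mu_\m(t,y')\,\underline{M}_t \]
is bar-invariant, has coefficient $1$ on $M_y$, and lies in $M_y+\sum_{w<y}v^{-1}\mathbb{Z}[v^{-1}]M_w$. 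Here I would invoke Lemma~\ref{bruhat-lem} to see that every $M_w$ occurring satisfies $w\le y$. By uniqueness this element equals $\underline{M}_y$.

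Taking the coefficient of $M_x$ now gives
\[ m_{x,y}= \begin{cases} m_{sx,y'}+v^{-1}m_{x,y'} & \text{if } sx>x,\\ m_{sx,y'}+v\,m_{x,y'} & \text{if } sx<x, \end{cases} \qquad\text{minus}\qquad \sum_t \mu_\m(t,y')\,m_{x,t}. \]
When $sx=x$ the first term is replaced by $(v+v^{-1})m_{x,y'}$. The sum may be restricted to $x<t<y'$ because $m_{x,t}=0$ unless $x\le t$; the term $t=x$ has $m_{x,x}=1$ and needs separate bookkeeping. Multiplying by $v^{\h(y)-\h(x)}$ and using $\h(sx)=\h(x)\pm1$ and $\h(y)=\h(y')+1$ converts this into the displayed formula for $\widetilde m_{x,y}$. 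Doing the same computation with $sx$ in place of $x$ gives the identical right-hand side, which proves $\widetilde m_{x,y}=\widetilde m_{sx,y}$.

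For the case $sy=y$, I would show that $H_s\underline{M}_y=v\underline{M}_y$. The element $(H_s-v)\underline{M}_y$ satisfies $\overline{(H_s-v)\underline{M}_y}=-(H_s-v)\underline{M}_y$ after rewriting via the quadratic relation. Its coefficients lie in $\mathbb{Z}[v^{-1}]$ with no $M_y$-term, so it is forced to vanish, since a bar-anti-invariant element with such coefficients must be zero, by a triangularity argument. Comparing the coefficients of $M_x$ and $M_{sx}$ in $H_s\underline{M}_y=v\underline{M}_y$ then gives $\widetilde m_{x,y}=\widetilde m_{sx,y}$.

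I expect the main obstacles to be the height and normalization bookkeeping, including the boundary term $t=x$ and the fixed-point case $sx=x$, and making the triangularity argument in the $sy=y$ case rigorous.
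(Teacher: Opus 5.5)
The paper gives no proof of this lemma; it is quoted from Marberg. Your route is the standard one: expand the bar-invariant element $(H_s+v^{-1})\underline M_{sy}$, subtract $\sum_{t<sy,\ st\le t}\mu_\m(t,sy)\underline M_t$, control supports with Lemma~\ref{bruhat-lem}, and invoke uniqueness of the canonical basis. That part of your computation is correct, including the case $sx=x$.

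\textbf{The gap in the case $sy<y$.} The problem is the last step, where you assert that the result is the displayed formula. Your derivation gives a sum over all $t<sy$ with $st\le t$, i.e.\ over $x\le t<sy$. When $sx\le x$, the term $t=x$ equals $\mu_\m(x,sy)\,v^{\h(y)-\h(x)}$, and no bookkeeping removes it. The summand $v^2\widetilde m_{x,sy}$ has coefficient $\mu_\m(x,sy)$ in degree $\h(y)-\h(x)$. Every other term on the right, and $\widetilde m_{x,y}$ itself, has smaller degree. So the $t=x$ term is exactly what cancels that coefficient.

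In fact the displayed formula with the strict bound $x<t$ is false. In $\cF_4$ take $x=(1,2)(3,4)$, $y=(1,4)(2,3)$ and $s=s_1$, so that $sy=(1,3)(2,4)$ and $sx=x$. Then $\underline M_{sy}=M_{sy}+v^{-1}M_x$ and $\underline M_y=M_y+v^{-1}M_{sy}+v^{-2}M_x$, so $\widetilde m_{x,y}=1$. The displayed right-hand side, however, is $v^2\widetilde m_{x,sy}+\widetilde m_{x,sy}=1+v^2$, with an empty sum. What your argument actually proves, and what you should state, is the version with $x\le t<sy$. This is the analogue of the $z=x$ term in the classical Kazhdan--Lusztig recursion.

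\textbf{The error in the case $sy=y$.} Since $\overline{H_s}=H_s-v+v^{-1}$, the element $H_s-v$ is bar-invariant. Hence $E=(H_s-v)\underline M_y$ is bar-invariant, not anti-invariant, and a bar-invariant element with coefficients in $\mathbb Z[v^{-1}]$ need not vanish. Your own coefficient computation shows that the $M_x$-coefficient of $E$ has constant term $-\mu_\m(x,y)$ when $sx>x$, and $0$ otherwise. So triangularity only gives $E=-\sum_{t<y,\ st>t}\mu_\m(t,y)\underline M_t$.

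The vanishing of these $\mu$'s is the case $sy=y$ of Lemma~\ref{Marberg-lem}, but to avoid circularity you should argue directly:
\begin{itemize}
\item By the quadratic relation, $(H_s+v^{-1})(H_s-v)=0$, so $\sum_{st>t}\mu_\m(t,y)(H_s+v^{-1})\underline M_t=0$.
\item For $st>t$ we have $(H_s+v^{-1})\underline M_t\in M_{st}+\sum_{w<st}\mathcal A M_w$. These vectors have distinct leading terms $M_{st}$, so they are linearly independent.
\item Hence every $\mu_\m(t,y)$ in the sum vanishes, and $E=0$.
\end{itemize}

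\textbf{A smaller point.} You claim that replacing $x$ by $sx$ gives an identical right-hand side. This holds termwise in the sum only after an induction on height, which supplies $\widetilde m_{x,t}=\widetilde m_{sx,t}$ for $st\le t$. It is cleaner to prove $H_s\underline M_y=v\underline M_y$ for every $sy\le y$ by induction, using $H_s(H_s+v^{-1})=v(H_s+v^{-1})$ when $sy<y$. You can then read off $m_{sx,y}=v\,m_{x,y}$ for $sx>x$, which gives $\widetilde m_{x,y}=\widetilde m_{sx,y}$.
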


By definition $m_{x, y}=0$ when $x \not \leq y$. When $x \leq y$, then
$$
v^{\mathrm{ht}(y)-\mathrm{ht}(x)} m_{x, y}=\tilde{m}_{x, y} \in 1+v^2 \mathbb{Z}\left[v^2\right] .
$$
Consequently, $\mu_\m(x, y)=0$ whenever $\operatorname{ht}(y)-\operatorname{ht}(x)$ is even (see [\cite{Marberg}, Proposition 3.18]).
Unlike the Kazhdan-Lusztig case, these integer coefficients can be negative. 
\subsection{\textit{W}-graph}
Following the conventions in \cite{Stembridge}, define a $W$-graph to be a triple $\Gamma=(U, \omega, \tau)$ consisting of a set $U$ with maps $\omega: U \times U \rightarrow \mathbb{Z}\left[v, v^{-1}\right]$ and $\tau: U \rightarrow\{$ subsets of $S\}$ such that the free $\mathbb{Z}\left[v, v^{-1}\right]$-module with basis $\left\{Y_u: u \in U\right\}$ has a left $\mathcal{H}$-module structure in which
$$
H_s Y_u=\left\{\begin{array}{ll}
v Y_u & \text { if } s \notin \tau(u) \\
-v^{-1} Y_u+\sum_{\substack{w \in u \\
s \notin \tau(w)}} \omega(u, w) Y_w & \text { if } s \in \tau(u)
\end{array} \quad \text { for all } s \in S \text { and } u \in U .\right.
$$

We view $\Gamma$ as a weighted digraph with edges $u \stackrel{\omega(u, w)}{\longrightarrow} w$ for each $u, w \in V$ with $\omega(u, w) \neq 0$. Marberg \cite{Marberg} proved that the two triples $\Gamma^\m_n=(V,\omega_\m,\tau_\m)$ is a $W$-graph with integer edge weights. Here $\tau_\m: X \rightarrow \mathcal{P}(S)$ is the map defined by
$$
\tau_\m(x)=\{s \in S: sxs \leq x\},
$$
and $\omega_\m: X \times X \rightarrow \mathbb{Z}$ is the map defined by
$$
\omega_\m(x \rightarrow y)= 
\begin{cases}
\mu_\m(x, y)+\mu_\m(y, x) & \text{if } \tau_\m(x) \nsubseteq \tau_\m(y), \\
0 & \text{if } \tau_\m(x) \subseteq \tau_\m(y).
\end{cases}
$$  

  The Kazhdan-Lusztig left cells \cite{KL} are very important equivalence classes indexed by various standard Young diagrams. When computing left cells one encounters the problem of having to compute a large number of Kazhdan-Lusztig polynomials before any explicit description of
their $W$-graphs can be given. Stembridge \cite{Stembridge} defined the cells as a strongly connected component of \textit{W}-graph, which can
be described combinatorially and can be constructed without calculating Kazhdan-Lusztig
polynomials. Nguyen \cite{Nguyen} proved that this kind of cells is isomorphic to Kazhdan-Lusztig left cells for type \textit{A}. 

\section{Recursive relation}\label{recurs-sect}
It is easy to see that $S_{n-2}$ can be embedded to $S_n$. With this embedding, we can view $\cF_{n-2}$ as a subset of $S_n$ and denote this subset with the same symbol $\cF_{n-2}$. To get a recursive map from $\cF_{n-2}$ to $\cF_{n}$, we define $\theta_n:\cF_n\to \cF_n, z\mapsto w_0zw_0$, where $w_0$ is $S_n$-maximal, and $\rho_n :\cF_{n-2} \rightarrow \cF_n , z\mapsto \theta_n(z s_{n-1})$. When no ambiguity is possible, $\rho_n$ is abbreviated as $\rho$.

\begin{proposition}\label{theta-prop}
The map $\theta_n :\cF_{n} \rightarrow \cF_n,z \mapsto w_0 z w_0 $ is an order-preserving one-to-one mapping.
\end{proposition}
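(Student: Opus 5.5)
Since $w_0^2=1$, the map $\theta_n$ is conjugation by an involution, so it is an involution on $S_n$. It sends involutions to involutions and preserves cycle type, so it maps $\cF_n$ into $\cF_n$. Because $\theta_n\circ\theta_n=\mathrm{id}$, it is a bijection of $\cF_n$ onto itself, and in particular one-to-one. The content of the proposition is therefore that $\theta_n$ preserves the Bruhat order of Definition~\ref{bruhat-def}, where $\cF_n$ carries the conjugation action and $\h=\ell/2$.

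The plan rests on two facts about conjugation by $w_0$ in $S_n$. First, $w_0 s_i w_0 = s_{n-i}$, so conjugation by $w_0$ permutes $S$ and hence also the set $\R$ of reflections. Second, $\ell(w_0 z w_0)=\ell(z)$ for every $z\in S_n$. This holds because conjugation by $w_0$ is the diagram automorphism of the Coxeter system, which sends any reduced word $s_{i_1}\cdots s_{i_k}$ to the word $s_{n-i_1}\cdots s_{n-i_k}$ of the same length. Alternatively, $z\mapsto w_0zw_0$ preserves the number of inversions, since $(i,j)$ is an inversion of $z$ if and only if $(n+1-j,n+1-i)$ is an inversion of $w_0zw_0$. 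Consequently $\h(\theta_n(z))=\h(z)$ for all $z\in\cF_n$.

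Next I check the generating relations of the Bruhat order. Suppose $z\in\cF_n$ and $r\in\R$ satisfy $\h(z)\le \h(rzr)$, so that $z\le rzr$ is a generating relation. Set $r'=w_0rw_0\in\R$. Then
\[\theta_n(rzr)=w_0 r z r w_0 = r'\,\theta_n(z)\,r',\]
and $\h(\theta_n(z))=\h(z)\le \h(rzr)=\h(r'\theta_n(z)r')$. Hence $\theta_n(z)\le r'\theta_n(z)r'=\theta_n(rzr)$ is again a generating relation. The Bruhat order is the transitive closure of these relations, so chaining this along any sequence of covers gives $x\le y \Rightarrow \theta_n(x)\le\theta_n(y)$. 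Applying the same argument to $\theta_n^{-1}=\theta_n$ gives the converse, so $\theta_n$ is in fact an order automorphism.

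The only step with any real content is the length invariance $\ell(w_0zw_0)=\ell(z)$; the rest is formal. I would handle it with the inversion-count bijection above rather than any general Coxeter theory. I do not expect a genuine obstacle.
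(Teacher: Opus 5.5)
Your proof is correct and uses the same idea as the paper. The paper's proof just cites Bj\"orner--Brenti (Proposition~2.3.4) for the fact that conjugation by $w_0$ is a Coxeter automorphism preserving length and Bruhat order. Your version is somewhat more careful: that citation concerns the Bruhat order on $S_n$, whereas you verify directly that $\theta_n$ preserves $\h$ and permutes $\R$. It therefore sends the generating relations of Definition~\ref{bruhat-def} for the quasiparabolic order on $\cF_n$ to generating relations, which is the order the statement is actually about.
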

\begin{proof}
The conjugation by the element $w_0$ induces an automorphism preserving all Coxeter structures including Bruhat orders and the length (see \cite{BB}, Proposition 2.3.4). 
\end{proof}

\begin{proposition}\label{rho-prop}
Let $Y_1=\rho(\cF_{n-2})$. The map $\rho :\cF_{n-2} \rightarrow Y_1,z \mapsto w_0 z s_{n-1} w_0 $ is an order-preserving one-to-one mapping.
\end{proposition}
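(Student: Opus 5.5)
The plan is to factor $\rho$ as $z\mapsto zs_{n-1}$ followed by $\theta_n$. Proposition~\ref{theta-prop} already shows that $\theta_n$ is an order-preserving bijection of $\cF_n$. So it suffices to prove that $\iota: z\mapsto zs_{n-1}$ is a bijection from $\cF_{n-2}$ onto $Y_0:=\{x\in\cF_n : x(n-1)=n\}$, and that $\iota$ preserves and reflects the Bruhat order of Definition~\ref{bruhat-def}. Then $Y_1=\theta_n(Y_0)$, and the claim follows.

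\emph{Bijectivity.} Any $z\in\cF_{n-2}\subset S_n$ fixes $n-1$ and $n$ and commutes with $s_{n-1}$. Hence $zs_{n-1}$ is a fixed-point-free involution of $\{1,\dots,n\}$ containing the $2$-cycle $(n-1,n)$. Conversely, every $x\in Y_0$ satisfies $xs_{n-1}\in\cF_{n-2}$, which gives the inverse map.

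\emph{Heights.} The inversions of $zs_{n-1}$ are those of $z$ on $\{1,\dots,n-2\}$ together with the single pair $(n-1,n)$. Therefore $\ell(zs_{n-1})=\ell(z)+1$, so $\h(\iota z)=\h(z)+\tfrac12$ uniformly.

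\emph{Order preservation.} The Bruhat order on $\cF_{n-2}$ is generated by the relations $z\le rzr$ with $r$ a reflection of $S_{n-2}$ and $\h(z)\le\h(rzr)$. Such an $r$ commutes with $s_{n-1}$, so $r(zs_{n-1})r=(rzr)s_{n-1}$. Because of the uniform height shift, the inequality of heights is unchanged. Hence each generating relation maps to a generating relation of the order on $\cF_n$, and $\iota$ is order-preserving.

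\emph{Order reflection (the main obstacle).} It remains to show that $zs_{n-1}\le z's_{n-1}$ in $\cF_n$ implies $z\le z'$ in $\cF_{n-2}$. A chain of reflection relations in $\cF_n$ might pass through elements outside $Y_0$, so the generating relations alone do not suffice. I would use the known fact (Rains--Vazirani, Hultman) that the quasiparabolic Bruhat order on $\cF_n$ is the restriction of the Bruhat order of $S_n$, and then argue with the rank-matrix criterion. For $w\in S_n$, set $r_w(i,j)=\#\{a\le i: w(a)\ge j\}$; then $u\le w$ if and only if $r_u\le r_w$ entrywise. For block-diagonal permutations $zs_{n-1}$ with blocks $\{1,\dots,n-2\}$ and $\{n-1,n\}$, these counts split. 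Entries with $i\le n-2$ and $j\le n-2$ equal $r_z(i,j)+\delta$, where the correction $\delta$ depends only on $(i,j)$ and not on $z$. All other entries are likewise independent of $z$ or coincide with entries of $r_z$. Hence $r_{zs_{n-1}}\le r_{z's_{n-1}}$ if and only if $r_z\le r_{z'}$, so $z\le z'$ in $S_{n-2}$ and thus in $\cF_{n-2}$.

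If the restriction fact is to be avoided, the fallback is an induction on $\h(z')-\h(z)$. Using Lemma~\ref{bruhat-lem} with $s\in\{s_1,\dots,s_{n-3}\}$, one can descend from $z's_{n-1}$ while staying inside $Y_0$, since such $s$ fix $n-1,n$ and conjugation by them preserves $Y_0$. If $z'$ is $S_{n-2}$-minimal, then $z'=z$ is forced by height. Finally, composing with $\theta_n$ gives the statement of Proposition~\ref{rho-prop}.
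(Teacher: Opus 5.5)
Your argument is correct and is essentially the paper's: the paper's one-line proof rests on the same factorization $\rho=\theta_n\circ(z\mapsto zs_{n-1})$ and the fact that $s_{n-1}$ commutes with $\cF_{n-2}$. You additionally make explicit the uniform height shift and the transport of the generating relations $z\le rzr$, which the paper leaves implicit. Your order-reflection step goes beyond what the paper proves, and beyond what the statement literally requires; it is sound via the rank-matrix criterion, given the cited fact that the Bruhat order on $\cF_n$ is the restriction of that of $S_n$, though the fallback descent via Lemma~\ref{bruhat-lem} would still need the cases $sxs\ge x$ and $sxs<x$ written out, the latter with a lift back from $szs\le sz's$ to $z\le z'$.
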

\begin{proof}
Since $s_{n-1}$ commutes with all elements of $\cF_{n-2}$, the map $\rho$ has the same properties as $\theta_n$.
\end{proof}

Define
\[
\sigma_j=s_js_{j-1}\cdots s_2s_1\qquad(1\leq j\leq n-1).
\]
Every element of $Y_1$ contains the cycle $(1,2)$, so conjugation by
$\sigma_1=s_1$ fixes $Y_1$. For $1\leq j\leq n-1$, set
\[
Y_j=\sigma_jY_1\sigma_j^{-1}.
\]
For $2\leq j\leq n-1$, this gives $Y_j=s_jY_{j-1}s_j$.
\begin{theorem}\label{recurs-thm}
For $1\leq i\leq n-1$, $Y_i$ is the set of elements $z\in\cF_n$
such that $z(1)=i+1$. Consequently,
\[
\cF_n=Y_1\sqcup Y_2\sqcup\cdots\sqcup Y_{n-1}.
\]
\end{theorem}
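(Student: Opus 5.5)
First I would pin down $Y_1$ explicitly. An element $z\in\cF_{n-2}$, viewed in $S_n$, is a fixed-point-free involution of $\{1,\dots,n-2\}$ that fixes $n-1$ and $n$. Multiplying by $s_{n-1}=(n-1,n)$ gives a fixed-point-free involution of $\{1,\dots,n\}$ containing the cycle $(n-1,n)$. This works because $s_{n-1}$ commutes with $z$, having disjoint support. Conjugating by $w_0$ replaces each cycle $(a,b)$ with $(n+1-a,n+1-b)$. Hence $\rho(z)$ is a fixed-point-free involution containing $(1,2)$, and its remaining cycles lie in $\{3,\dots,n\}$.

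Conversely, let $y\in\cF_n$ with $y(1)=2$. Then $w_0yw_0$ contains $(n-1,n)$, so $w_0yw_0s_{n-1}$ lies in $\cF_{n-2}$. Its image under $\rho$ is $y$. So $Y_1=\{z\in\cF_n : z(1)=2\}$, which is the case $i=1$.

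Next I would run an induction on $i$, using the relation $Y_j=s_jY_{j-1}s_j$ recorded just before the theorem. Suppose $Y_{j-1}=\{z\in\cF_n: z(1)=j\}$ for some $j\geq 2$. Conjugation by the transposition $s_j=(j,j+1)$ relabels the entries $j\leftrightarrow j+1$ inside the cycles. Since $j\geq 2$, the entry $1$ is not moved. So for $z\in Y_{j-1}$ the cycle $(1,j)$ becomes $(1,j+1)$, and $s_jzs_j$ is still fixed-point-free, giving $s_jzs_j(1)=j+1$. Because conjugation by $s_j$ is an involution on $\cF_n$, the reverse inclusion follows the same way. Any $z$ with $z(1)=j+1$ has $s_jzs_j(1)=j$, so it lies in $Y_{j-1}$ and hence $z\in s_jY_{j-1}s_j$. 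This gives $Y_j=\{z\in\cF_n : z(1)=j+1\}$.

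Finally, the disjoint union follows at once. Every $z\in\cF_n$ has $z(1)\neq 1$, since $z$ has no fixed points, so $z(1)\in\{2,\dots,n\}$. The sets $\{z : z(1)=i+1\}$ for $1\le i\le n-1$ therefore partition $\cF_n$.

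The only step needing care is the bookkeeping of conjugation. It rests on the fact that $\pi z\pi^{-1}$ maps $\pi(a)\mapsto\pi(b)$ whenever $z$ maps $a\mapsto b$, and on checking $w_0(n-1)=2$ and $w_0(n)=1$. I expect no real obstacle here.
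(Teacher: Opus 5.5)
Your proof is correct. It differs from the paper's in how you get the reverse inclusion $\{z\in\cF_n: z(1)=i+1\}\subseteq Y_i$.

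The paper argues only the forward direction directly. Elements of $Y_1$ have first value $2$, and conjugation by $\sigma_i$ carries the pair $\{1,2\}$ to $\{1,i+1\}$. It then obtains equality by counting. The $Y_i$ are pairwise disjoint because their first values differ. Each has $(n-3)!!$ elements because $\rho$ and conjugation are injective. Since $(n-1)(n-3)!!=(n-1)!!=|\cF_n|$, the $Y_i$ must exhaust $\cF_n$, and the first value then pins down which $Y_i$ contains a given $z$.

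You instead prove both inclusions directly. You exhibit the inverse $y\mapsto w_0yw_0s_{n-1}$ of $\rho$ on $\{y\in\cF_n: y(1)=2\}$. You then pass from $Y_{j-1}$ to $Y_j$ using that conjugation by $s_j$ is an involution that fixes the label $1$ when $j\geq 2$.

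The counting route is shorter, but it relies on the enumeration $|\cF_m|=(m-1)!!$ and on the injectivity in Proposition~\ref{rho-prop}. Your route is self-contained and bijective, needs neither fact, and gives an explicit inverse of $\rho$ on $Y_1$. Your stepwise induction through the $s_j$ also avoids having to track how the cycle $\sigma_i$ permutes the labels $1,\dots,i+1$.
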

We call this the recursive decomposition of $\cF_n$.
\begin{proof}
If $y\in\cF_{n-2}$, then the definition of $\rho_n$ gives
$\rho_n(y)(1)=2$. Hence every element of $Y_1$ has first value $2$.
Conjugation by $\sigma_i$ relabels $2$ as $i+1$, so every element of
$Y_i$ has first value $i+1$.

The sets $Y_i$ are therefore pairwise disjoint. Each has cardinality
$|\cF_{n-2}|=(n-3)!!$, while
\[
\sum_{i=1}^{n-1}|Y_i|=(n-1)(n-3)!!=(n-1)!!=|\cF_n|.
\]
Thus the disjoint union of the $Y_i$ exhausts $\cF_n$, proving the
reverse inclusion as well.
\end{proof}

\begin{example}
For $\cF_6$, we have $Y_1=\{s_1s_3s_5,s_1s_4s_5s_3s_4,s_1s_5s_4s_5s_3s_4s_5\}$. Then we see that for $z\in Y_1$, $z(1)=2$. Similarly, $Y_2=s_2Y_1s_2=\{s_2s_1s_3s_2s_5,s_2s_4s_1s_3s_5s_4s_2,s_2s_1s_5s_4s_5s_3s_4s_5s_2\}, \cdots, Y_5=\{s_1s_2s_3s_4s_5s_1s_2s_3s_4s_1s_2s_3s_1s_2s_1,s_1s_2s_3s_4s_5s_2s_3s_4s_1s_2s_3s_2s_1,s_1s_2s_3s_4s_5s_3s_4s_3s_1s_2s_1\}$. Then $\cF_6=Y_1\sqcup Y_2\sqcup Y_3\sqcup Y_4\sqcup Y_5$.
\end{example}


For $1\leq j\leq n-1$, let $\nu_j$ denote conjugation by $\sigma_j$
on $\cF_n$:
\[
\nu_j(w)=\sigma_jw\sigma_j^{-1}.
\]
Its restriction maps $Y_1$ bijectively to $Y_j$. On $Y_1$,
$\nu_1$ is the identity, and for $2\leq j\leq n-1$,
\[
\nu_j(x)=s_j\nu_{j-1}(x)s_j.
\]
\begin{lemma}\label{nu-lem}
For $2\leq i\leq j$,
\[
\sigma_js_i=s_{i-1}\sigma_j.
\]
Moreover,
\[
\sigma_js_{j+1}=(s_js_{j+1}s_j)\sigma_j,
\qquad
\sigma_js_i=s_i\sigma_j\quad(i\geq j+2).
\]
Consequently, for $w\in\cF_n$,
\[
\nu_j(s_iws_i)=
\begin{cases}
s_{i-1}\nu_j(w)s_{i-1},&2\leq i\leq j,\\
(s_js_{j+1}s_j)\nu_j(w)(s_js_{j+1}s_j),&i=j+1,\\
s_i\nu_j(w)s_i,&i\geq j+2.
\end{cases}
\]
\end{lemma}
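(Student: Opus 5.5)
The plan is to verify the three identities by computing $\sigma_j$ explicitly as a permutation and then using the standard fact that conjugating a transposition relabels its entries: $\pi(a,b)\pi^{-1}=(\pi(a),\pi(b))$ for any $\pi\in S_n$. Each identity $\sigma_js_i=t\,\sigma_j$ is equivalent to $\sigma_js_i\sigma_j^{-1}=t$. Since $s_i=(i,i+1)$, it therefore suffices to know $\sigma_j(i)$ and $\sigma_j(i+1)$.

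First I compute $\sigma_j=s_js_{j-1}\cdots s_2s_1$ as a function, composing right to left so that $s_1$ acts first.
\begin{itemize}
\item For $k=1$: $s_1$ sends $1\mapsto 2$, then $s_2$ sends $2\mapsto3$, and so on, so $\sigma_j(1)=j+1$.
\item For $2\leq k\leq j+1$: the factors $s_1,\dots,s_{k-2}$ fix $k$, then $s_{k-1}$ sends $k\mapsto k-1$, and the remaining factors $s_k,\dots,s_j$ fix $k-1$. Hence $\sigma_j(k)=k-1$.
\item For $k\geq j+2$: every factor fixes $k$.
\end{itemize}
So $\sigma_j$ is the cycle $(1,j+1,j,\dots,2)$, which is a short induction on $j$ or a direct check.

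Next I read off the three cases from $\sigma_js_i\sigma_j^{-1}=(\sigma_j(i),\sigma_j(i+1))$.
\begin{itemize}
\item If $2\leq i\leq j$, both $i$ and $i+1$ lie in $[2,j+1]$. The transposition becomes $(i-1,i)=s_{i-1}$, so $\sigma_js_i=s_{i-1}\sigma_j$.
\item If $i=j+1$, we get $(\sigma_j(j+1),\sigma_j(j+2))=(j,j+2)$. This equals $s_js_{j+1}s_j$, since conjugating $(j+1,j+2)$ by $s_j$ swaps $j$ and $j+1$.
\item If $i\geq j+2$, both points are fixed by $\sigma_j$, so $\sigma_js_i=s_i\sigma_j$.
\end{itemize}

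Finally, for the consequence, write $t_i=\sigma_js_i\sigma_j^{-1}$. Then
\[
\nu_j(s_iws_i)=\sigma_js_iws_i\sigma_j^{-1}=(\sigma_js_i\sigma_j^{-1})(\sigma_jw\sigma_j^{-1})(\sigma_js_i\sigma_j^{-1})=t_i\,\nu_j(w)\,t_i,
\]
and substituting the three values of $t_i$ gives the displayed formula. The case $i=1$ is correctly excluded, since $\sigma_js_1\sigma_j^{-1}=(j+1,1)$ is not a conjugate of the stated form.

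There is no real obstacle here. The only point needing care is the composition convention for $\sigma_j$; I fix it to match the paper's usage $Y_j=\sigma_jY_1\sigma_j^{-1}$, which relabels $2$ as $j+1$, consistent with $\sigma_j(1)=j+1$ and $\sigma_j$ sending $2,\dots,j+1$ down by one.
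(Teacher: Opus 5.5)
Your argument is correct, but it takes a different route from the paper. The paper works inside the Coxeter presentation. It moves $s_i$ through the word $s_j\cdots s_1$ using commutation relations and one braid relation at $s_{i-1},s_i$, and it handles the cases $i=j+1$ and $i\geq j+2$ the same way. You instead compute $\sigma_j$ once as the cycle $(1,j+1,j,\dots,2)$ and read off every case from $\pi(a,b)\pi^{-1}=(\pi(a),\pi(b))$.

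The paper's word manipulation is convention-free: the identities are relations in the group, so they hold however permutations are composed. Your caveat about the composition convention therefore matters only for naming $\sigma_j$ as a specific cycle, not for the identities themselves.

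Your computation gives more information. It proves the explicit relabeling rule ($1\mapsto j+1$, $a\mapsto a-1$ for $2\leq a\leq j+1$, larger labels fixed). The paper later uses this rule without proof in Theorem~\ref{recurs-thm}, Lemma~\ref{tau-lem1} and Proposition~\ref{vert-stair-prop}. It also exhibits $s_js_{j+1}s_j$ as the transposition $(j,j+2)$. That makes the non-simplicity of this reflection, invoked in Propositions~\ref{hori-stair-prop} and~\ref{hori-stair-prop2}, immediate.
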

\begin{proof}
The first identity follows by moving $s_i$ to the left through
$s_1,\ldots,s_j$ and using the braid relation at $s_{i-1},s_i$.
The second identity is the braid relation
$s_js_{j+1}s_j=s_{j+1}s_js_{j+1}$, and the last follows from
commutation. Conjugating $s_iws_i$ by $\sigma_j$ gives the displayed
three cases.
\end{proof}

\begin{proposition}\label{nuj-prop}
For $1\leq j\leq n-1$, the restriction
$\nu_j:Y_1\to Y_j$ is an order-preserving bijection.
\end{proposition}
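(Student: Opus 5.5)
The plan is to argue by induction on $j$, using the recursion $\nu_j(x)=s_j\nu_{j-1}(x)s_j$ on $Y_1$. The main point is that conjugation by $s_j$ is always an ascent on $Y_{j-1}$, and ascents of this uniform kind preserve Bruhat order by Lemma~\ref{bruhat-lem}.

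Bijectivity is immediate. $\nu_j$ is conjugation by $\sigma_j$, so it is invertible on $\cF_n$, and by definition $\nu_j(Y_1)=\sigma_jY_1\sigma_j^{-1}=Y_j$. For $j=1$ the map $\nu_1$ is the identity on $Y_1$, so there is nothing to prove.

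For the inductive step, fix $2\le j\le n-1$ and assume $\nu_{j-1}:Y_1\to Y_{j-1}$ is order-preserving. It suffices to show that $w\mapsto s_jws_j$ is order-preserving from $Y_{j-1}$ to $Y_j$.

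\emph{Step 1: $s_j$ is an ascent on $Y_{j-1}$.} Take $w\in Y_{j-1}$. By Theorem~\ref{recurs-thm}, $w(1)=j$, so $w(j)=1$. Hence $w(j)=1<w(j+1)$, and $w(j+1)\ne j$ because $w(1)=j$. So $s_jws_j\ne w$, and conjugating by $s_j$ swaps the values in positions $j,j+1$ while relabelling $j\leftrightarrow j+1$. The standard length count for involutions then gives $\ell(s_jws_j)=\ell(w)+2$, that is, $\h(s_jws_j)=\h(w)+1$. Since $s_j$ is a reflection, Definition~\ref{bruhat-def} gives $w< s_jws_j$. This length check is the only genuinely computational point, and I expect it to be the main thing to verify carefully. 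I would do it by counting inversions of $w$ involving positions $j,j+1$, using that the pair $(j,j+1)$ itself goes from non-inversion to inversion and the paired positions $1$ and $w(j+1)$ are relabelled compatibly.

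\emph{Step 2: a lifting lemma.} Write $s\cdot w=sws$. Suppose $x\le y$ in $\cF_n$ with $x<s\cdot x$ and $y<s\cdot y$; the claim is that $s\cdot x\le s\cdot y$. Set $y'=s\cdot y$. Then $s\cdot y'=y\le y'$, and $x\le y\le y'$. The first implication of Lemma~\ref{bruhat-lem}, applied to $x\le y'$, gives $s\cdot x\le y'=s\cdot y$.

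\emph{Step 3: conclusion.} Let $x\le y$ in $Y_1$. By induction, $\nu_{j-1}(x)\le\nu_{j-1}(y)$, and both lie in $Y_{j-1}$. By Step 1, $s_j$ is an ascent at both. By Step 2,
\[
\nu_j(x)=s_j\nu_{j-1}(x)s_j\le s_j\nu_{j-1}(y)s_j=\nu_j(y).
\]

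If ``order-preserving'' is meant in the stronger sense that the inverse also preserves order, the reverse direction needs a separate argument. There $s_j$ is a uniform descent on $Y_j$, and one needs: if $x<s\cdot x$, $y<s\cdot y$ and $s\cdot x\le s\cdot y$, then $x\le y$. Lemma~\ref{bruhat-lem} alone only yields $x\le s\cdot y$, so I would invoke the corresponding ``Z-property'' of Bruhat order on quasiparabolic sets from \cite[\S5]{RV}, which is proved in the same way as Lemma~\ref{bruhat-lem}.
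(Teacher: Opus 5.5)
Your proof is correct, but it takes a different route from the paper. The paper transports individual Bruhat relations. It asserts the constant height shift $\operatorname{ht}(\nu_j(x))=\operatorname{ht}(x)+j-1$ on $Y_1$. It then observes that a cover $y=rxr$ in $Y_1$ goes to $\nu_j(y)=r'\nu_j(x)r'$, where $r'=\sigma_jr\sigma_j^{-1}$ is again a reflection, so covers go to covers, and concatenates along a saturated chain.

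You instead factor $\nu_j$ into the simple conjugations by $s_2,\dots,s_j$ and show that each is a uniform ascent on $Y_{j-1}$. This also supplies the height-shift formula the paper only asserts. You then conclude with the lifting property of Lemma~\ref{bruhat-lem}.

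Your version is more robust. The paper's chain argument tacitly needs a saturated chain between two elements of $Y_1$ to stay inside $Y_1$, since otherwise the height shift is not available at the intermediate steps. This is a convexity property the paper never checks. Lemma~\ref{bruhat-lem} holds on all of $\cF_n$, so your argument needs nothing of the sort. The paper's version records more: it identifies the reflection realizing each image cover, which is the information the remark after Corollary~\ref{nu-cor} emphasizes for later use.

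One correction to your final paragraph: no separate Z-property is needed for the converse. Suppose $s\cdot x\le s\cdot y$ with $x<s\cdot x$ and $y<s\cdot y$. The first implication of Lemma~\ref{bruhat-lem}, applied to $s\cdot x\le s\cdot y$ using $s\cdot(s\cdot y)=y\le s\cdot y$, gives $x\le s\cdot y$. The second implication, using $x\le s\cdot x$, then gives $x\le s\cdot(s\cdot y)=y$. So Lemma~\ref{bruhat-lem} alone makes $\nu_j:Y_1\to Y_j$ a poset isomorphism.
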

\begin{proof}
Bijectivity follows from the definition of $Y_j$. For $x\in Y_1$, a
direct length calculation gives
\[
\operatorname{ht}(\nu_j(x))=\operatorname{ht}(x)+j-1.
\]
Suppose that $x\lessdot y$ in $Y_1$. Then $y=rxr$ for a reflection
$r$, and $\operatorname{ht}(y)=\operatorname{ht}(x)+1$. Setting
$r'=\sigma_jr\sigma_j^{-1}$ gives
\[
\nu_j(y)=r'\nu_j(x)r',
\qquad
\operatorname{ht}(\nu_j(y))
 =\operatorname{ht}(\nu_j(x))+1.
\]
Thus $\nu_j(x)\lessdot\nu_j(y)$. Applying this to every cover in a
saturated chain proves that $\nu_j$ is order preserving.
\end{proof}

\begin{corollary}\label{nu-cor}
For $x\in Y_1$ and $1\leq i\leq j\leq n-1$, we have
$\nu_i(x)\leq\nu_j(x)$.
\end{corollary}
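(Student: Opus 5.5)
The plan is to reduce to consecutive indices and then use the defining covering relations of the Bruhat order on $\cF_n$ (Definition~\ref{bruhat-def}). For $x\in Y_1$ and $2\leq k\leq n-1$, the recursion preceding Lemma~\ref{nu-lem} gives $\nu_k(x)=s_k\nu_{k-1}(x)s_k$. Since the $W$-action on $\cF_n$ is conjugation and $s_k$ is a reflection, $\nu_k(x)$ is of the form $r\cdot w$ with $w=\nu_{k-1}(x)$ and $r=s_k\in\R$. By Definition~\ref{bruhat-def}, to get $w\leq r\cdot w$ it suffices to show $\h(\nu_{k-1}(x))\leq\h(\nu_k(x))$. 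Once we have $\nu_{k-1}(x)\leq\nu_k(x)$ for every $k$, the chain
\[
\nu_i(x)\leq\nu_{i+1}(x)\leq\cdots\leq\nu_j(x)
\]
and transitivity give the corollary. The case $i=j$ is trivial.

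For the height comparison, one could simply cite the formula $\h(\nu_j(x))=\h(x)+j-1$ stated in the proof of Proposition~\ref{nuj-prop}, which gives $\h(\nu_k(x))=\h(\nu_{k-1}(x))+1$ at once. Since that formula was only asserted there as ``a direct length calculation'', I would rather verify the single step directly. Put $w=\nu_{k-1}(x)$. By Theorem~\ref{recurs-thm}, $w\in Y_{k-1}$, so $w(1)=k$ and hence $w(k)=1$. Also $w(k+1)\neq 1$ and $w(k+1)\neq k+1$, since $w$ is fixed-point-free, so $w(k)=1<w(k+1)$.

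Next I would apply the standard fact about conjugating an involution by a simple transposition. If $w(k)<w(k+1)$ and $w(k)\neq k+1$, then $\ell(s_kws_k)=\ell(w)+2$. Here $w(k)=1\neq k+1$ because $k\geq 2$. This fact is checked by counting inversions: both left and right multiplication by $s_k$ add one inversion, and the hypothesis $w(k)\neq k+1$ rules out the degenerate case $s_kw=ws_k$. Hence $\h(s_kws_k)=\h(w)+1$, so $w<s_kws_k=\nu_k(x)$. Iterating from $\h(\nu_1(x))=\h(x)$ also re-derives the height formula used in Proposition~\ref{nuj-prop}.

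The only real content, and the step I expect to need the most care, is this length computation for $s_kws_k$. In particular one must check that the two added inversions do not cancel; the condition $w(k)\neq k+1$ is exactly what excludes this. Everything else is formal from the definition of the Bruhat order and transitivity.
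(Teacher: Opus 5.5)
Your proposal is correct and is essentially the paper's proof. The paper also chains the steps $\nu_{t-1}(x)\lessdot\nu_t(x)$, obtained from $\nu_t(x)=s_t\nu_{t-1}(x)s_t$ and $\h(\nu_t(x))=\h(\nu_{t-1}(x))+1$; you verify that increment directly via $w(k)=1<w(k+1)$, which is the same computation the paper carries out in the proof of Theorem~\ref{between-mole-thm}.

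One small correction to your justification. For an involution with $w(k)<w(k+1)$, the condition $w(k)\neq k+1$ is automatic. The only case in which the second inversion fails to be added is $w(k)=k$, $w(k+1)=k+1$, so your ``standard fact'' as stated fails for $w=1$. Here that case is excluded because $w$ is fixed-point-free, or directly because $s_k$ fixes $w(k)=1$ when $k\geq 2$, which gives $(s_kw)(k)=1<(s_kw)(k+1)$.
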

\begin{proof}
For $2\leq t\leq n-1$,
\[
\nu_t(x)=s_t\nu_{t-1}(x)s_t
\quad\text{and}\quad
\operatorname{ht}(\nu_t(x))
 =\operatorname{ht}(\nu_{t-1}(x))+1.
\]
Hence $\nu_{t-1}(x)\lessdot\nu_t(x)$. Chaining these covers from $i$
to $j$ proves the assertion.
\end{proof}

\begin{remark}
Proposition~\ref{nuj-prop} not only proves that each $Y_i$ possesses the same Bruhat order but also illustrates how the reflection $r$ changes under the action of $\nu_j$. This will prove to be extremely useful when classifying molecules in Section~\ref{mole-sect}.
\end{remark}

\begin{proposition}\label{skip prop}
Assume that $y\in Y_i$, $s\in S$, and $sys\notin Y_i$.
\begin{itemize}
\item[(a)] If $i=1$, then $s=s_2$.
\item[(b)] If $2\leq i\leq n-2$, then
$s\in\{s_1,s_i,s_{i+1}\}$.
\item[(c)] If $i=n-1$, then $s\in\{s_1,s_{n-1}\}$.
\end{itemize}
\end{proposition}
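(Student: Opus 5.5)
The plan is to use the characterization in Theorem~\ref{recurs-thm}: $Y_i=\{z\in\cF_n : z(1)=i+1\}$. The condition $sys\notin Y_i$ is then equivalent to $(sys)(1)\neq i+1$. So I would compute $(sys)(1)$ directly for each simple transposition $s=s_k$, $1\leq k\leq n-1$, and find exactly which $k$ can change the value at $1$. Since $s$ is an involution, $(sys)(1)=s\bigl(y(s(1))\bigr)$.

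First take $k\geq 2$. Then $s_k(1)=1$, so $(sys)(1)=s_k(y(1))=s_k(i+1)$. This differs from $i+1$ exactly when $i+1\in\{k,k+1\}$, that is, when $k\in\{i,i+1\}$. So among $s_2,\dots,s_{n-1}$, only $s_i$ and $s_{i+1}$ can move $y$ out of $Y_i$, and only those of them that exist with index at least $2$.

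Next take $k=1$. Then $(s_1ys_1)(1)=s_1(y(2))$. For $i=1$ we have $y(2)=1$, because $y$ is an involution with $y(1)=2$. Hence $(s_1ys_1)(1)=s_1(1)=2$, so $s_1$ preserves $Y_1$, as already noted in the text. For $i\geq 2$ no further analysis is needed, since $s_1$ is simply allowed in the conclusion. (In fact it always leaves $Y_i$: $y(2)\notin\{1,2,i+1\}$ because $y$ is a fixed-point-free involution with $y(1)=i+1$.)

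Combining the two cases gives the three statements.
\begin{itemize}
\item[(a)] For $i=1$, the candidates from $k\geq2$ are $k\in\{1,2\}\cap\{2,\dots,n-1\}=\{2\}$, and $k=1$ is excluded, so $s=s_2$.
\item[(b)] For $2\leq i\leq n-2$, both $s_i$ and $s_{i+1}$ are genuine simple reflections with index at least $2$. Adding $s_1$ gives $s\in\{s_1,s_i,s_{i+1}\}$.
\item[(c)] For $i=n-1$, the reflection $s_{n}$ does not exist in $S_n$. This leaves $s\in\{s_1,s_{n-1}\}$.
\end{itemize}

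There is no serious obstacle here. The only step needing care is the $k=1$ case: there one must evaluate $y(2)$ instead of $y(1)$, and use the involution property to see that $s_1$ stabilizes $Y_1$.
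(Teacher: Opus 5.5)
Your proposal is correct and follows the paper's own argument: both use the characterization $Y_i=\{z\in\cF_n: z(1)=i+1\}$ from Theorem~\ref{recurs-thm}, observe that conjugation by $s_k$ can change the value paired with $1$ only when $k\in\{1,i,i+1\}$, and then remove $s_1$ when $i=1$ (it fixes the cycle $(1,2)$) and $s_n$ when $i=n-1$ (it does not exist). Your explicit computation $(s_kys_k)(1)=s_k\bigl(y(s_k(1))\bigr)$ just spells out the step the paper states in one line.
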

\begin{proof}
Since $y(1)=i+1$, conjugation by $s_t$ leaves the value paired with
$1$ unchanged unless $t=1$, $t=i$, or $t=i+1$. For $i=1$,
conjugation by $s_1$ fixes the cycle $(1,2)$, leaving only $s_2$.
For $i=n-1$, the generator $s_n$ does not exist, leaving only
$s_1$ and $s_{n-1}$. This gives the three cases.
\end{proof}

Before we move to the next section, we now label the elements of $\cF_n$ recursively as follows:
First assume we have already labeled elements for $\cF_{n-2}$ as $x_1,x_2\cdots, x_{(n-3)!!}$, then we label $\rho(x_i)\in Y_1$ as $x_i$. Next, for $x_i\in Y_1$, we label $\nu_j(x_i)$ as $x_{i+(j-1)(n-3)!!}$.

\section{Bidirected edges}\label{bidiedge-sect}
The following processes and results are about $\Gamma^\m_n$. Let $x, y \in \cF_n$ with $x <y $. Then $\omega_\m(x \rightarrow y) \neq 0$ and $\omega_\m(y \rightarrow x) \neq 0$ indicate that it is bidirected edge between $x$ and $y$.  Recall that $\tau_\m(x)=\{s\in S: sxs\le x\}$.
\begin{definition}
The \textit{molecules} are subgraphs of the $W$-graph, which obtained by retaining only the bidirected edges. 
\end{definition}
 
\begin{lemma}\label{bidi-lem}
There is a bidirected edge between the vertices $x\le y$ if and only if the following conditions hold:
\begin{itemize}
\item{BE1} There exist $s\in S$ such that $y=sxs$.
\item{BE2} $\tau_\m(x) \nsubseteq \tau_\m(y)$.
\end{itemize}
\end{lemma}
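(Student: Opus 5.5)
We need to prove: for $x\le y$ there is a bidirected edge iff (BE1) $y=sxs$ for some $s\in S$ and (BE2) $\tau_\m(x)\nsubseteq\tau_\m(y)$.

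\emph{Setup.} By the definition of $\omega_\m$, a bidirected edge means both $\omega_\m(x\to y)\neq0$ and $\omega_\m(y\to x)\neq0$. Each of these requires two things:
\begin{itemize}
\item $\tau_\m(x)\nsubseteq\tau_\m(y)$ and $\tau_\m(y)\nsubseteq\tau_\m(x)$, respectively;
\item $\mu_\m(x,y)+\mu_\m(y,x)\neq0$.
\end{itemize}
Since $m_{y,x}=0$ when $y\not\le x$, and $x<y$ (equality gives no edge), the weight sum reduces to $\mu_\m(x,y)$. So the task is to show that $\mu_\m(x,y)\neq0$ together with incomparability of the two $\tau$-sets is equivalent to BE1 and BE2.

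\emph{Forward direction.} Assume the edge is bidirected. Then $\tau_\m(y)\nsubseteq\tau_\m(x)$, so pick $s\in\tau_\m(y)\setminus\tau_\m(x)$. This gives $sys\le y$ and $sxs\not\le x$. Since conjugation by $s$ either fixes $x$ or changes its height by exactly one, we get $sxs>x$. Lemma~\ref{Marberg-lem} now gives $\mu_\m(x,y)=\delta_{sxs,y}$. As $\mu_\m(x,y)\neq0$, this forces $y=sxs$, which is BE1. BE2 is one of the two $\tau$-conditions already assumed.

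\emph{Converse.} Assume $y=sxs$ with $x<y$ and $\tau_\m(x)\nsubseteq\tau_\m(y)$.
\begin{enumerate}
\item Since $y\neq x$ and $\h(y)-\h(x)\in\{0,1\}$ with $x<y$, we get $\h(y)=\h(x)+1$. Hence $sxs>x$ and $sys=x<y$.
\item From this, $s\in\tau_\m(y)$ and $s\notin\tau_\m(x)$, so $\tau_\m(y)\nsubseteq\tau_\m(x)$.
\item Lemma~\ref{Marberg-lem} applied with this $s$ gives $\mu_\m(x,y)=\delta_{sxs,y}=1$.
\end{enumerate}
Both weights therefore equal $1$, so the edge is bidirected.

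\emph{Main subtlety.} The one point needing care is the height dichotomy for conjugation in $\cF_n$: either $sxs=x$ or $|\h(sxs)-\h(x)|=1$. This follows from $\cF_n$ being a scaled $W$-set (Definition~\ref{scaled-def}) together with (QP1). It also matters that "$sxs\le x$ fails" means $sxs>x$, which holds because $x$ and $sxs$ differ by a reflection and are therefore comparable in the Bruhat order.
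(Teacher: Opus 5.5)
Your proof is correct and takes the same approach as the paper. The paper's proof only cites the definition of $\omega_\m$ and Lemma~\ref{Marberg-lem}; you have filled in the details it leaves implicit: reducing the weight to $\mu_\m(x,y)$, choosing $s\in\tau_\m(y)\setminus\tau_\m(x)$, and using the scaled-set/QP1 height dichotomy to get $sxs>x$.
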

\begin{proof}
This follows directly from the definition of bidirected edges and Lemma~\ref{Marberg-lem}.
\end{proof}
 
We denote this bidirected edge as $x \mleftrightarrow{s} y$ or $x \leftrightarrow y$. Moreover, $x \sim y$ denotes that $x$ and $y$ are in the same molecule. Then we discuss how the map $\rho$ affect the bidirected edges.
\begin{proposition}\label{rho-bi-prop}
For $x\leq y$ in $\cF_{n-2}$, there is a bidirected edge
$x\leftrightarrow y$ if and only if there is a bidirected edge
$\rho(x)\leftrightarrow\rho(y)$. In particular,
$x\sim y$ if and only if $\rho(x)\sim\rho(y)$.
\end{proposition}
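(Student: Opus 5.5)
By Lemma~\ref{bidi-lem}, a bidirected edge $x\leftrightarrow y$ with $x\le y$ amounts to two conditions: (BE1) $y=sxs$ for some $s\in S$, and (BE2) $\tau_\m(x)\nsubseteq\tau_\m(y)$. The plan is to check that $\rho$ transports both conditions faithfully between $\cF_{n-2}$ and $Y_1\subseteq\cF_n$. Once the statement about edges is proved, the statement about molecules follows. Indeed, $\rho$ is a bijection onto $Y_1$ (Proposition~\ref{rho-prop}), so paths of bidirected edges in $\cF_{n-2}$ correspond to paths of bidirected edges inside $Y_1$.

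For (BE1), write $\rho(z)=w_0 z s_{n-1} w_0$. Recall that $w_0 s_i w_0=s_{n-i}$ and that $s_{n-1}$ commutes with every $s\in S_{n-2}=\langle s_1,\dots,s_{n-3}\rangle$. Hence for $s=s_i\in S_{n-2}$ we get
\[\rho(sxs)=w_0 s x s s_{n-1} w_0=s_{n-i}\,\rho(x)\,s_{n-i}.\]
Conversely, suppose $\rho(y)=t\rho(x)t$ with $t\in S$. Since both sides lie in $Y_1$, Proposition~\ref{skip prop}(a) gives $t\neq s_2$. The case $t=s_1$ is excluded because $s_1$ centralizes the cycle $(1,2)$ and the rest of $\rho(x)$, so conjugation by $s_1$ fixes $\rho(x)$, forcing $\rho(y)=\rho(x)$. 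So $t=s_{n-i}$ with $n-i\ge 3$, i.e.\ $i\le n-3$. Applying $\rho^{-1}$ then gives $y=s_ixs_i$ with $s_i\in S_{n-2}$.

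For (BE2), we compare $\tau_\m(\rho(x))$ with $\tau_\m(x)$. We will show
\[\tau_\m(\rho(x))=\{s_1\}\cup\{s_{n-i}: s_i\in\tau_\m(x)\}.\]
First, $s_1\rho(x)s_1=\rho(x)$ gives $s_1\in\tau_\m(\rho(x))$. Second, $s_2\notin\tau_\m(\rho(x))$: conjugating by $s_2$ breaks the cycle $(1,2)$, and the result has larger height, since $(1,2)$ is the minimal possible pairing of $1$. Third, for $s_i\in S_{n-2}$, $s_{n-i}\rho(x)s_{n-i}=\rho(s_ixs_i)$, and Proposition~\ref{rho-prop} says $\rho$ preserves Bruhat order in both directions, so $s_{n-i}\in\tau_\m(\rho(x))$ iff $s_i\in\tau_\m(x)$. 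The two sets $\tau_\m(\rho(x))$ and $\tau_\m(\rho(y))$ share the common piece $\{s_1\}$ and otherwise correspond to $\tau_\m(x),\tau_\m(y)$ under the injective relabeling $s_i\mapsto s_{n-i}$. Therefore non-containment is preserved and reflected. Together with $x\le y\iff\rho(x)\le\rho(y)$, this gives the equivalence of bidirected edges.

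The main obstacle is the claim that $s_2\notin\tau_\m(\rho(x))$, i.e.\ $s_2\rho(x)s_2>\rho(x)$. I would verify this directly with the formula $\h=\ell/2$. Let $a=\rho(x)(3)$. Conjugation by $s_2$ turns the cycles $(1,2)(3,a)$ into $(1,3)(2,a)$, and a count of inversions shows the length strictly increases. Once that is established, the rest is the formal transport argument above.
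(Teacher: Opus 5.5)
\textbf{The edge statement is fine.} Your argument for it is the paper's argument: $\rho(s_ixs_i)=s_{n-i}\rho(x)s_{n-i}$ for $i\le n-3$, the formula $\tau_{\m}(\rho(x))=\{s_1\}\cup\{s_{n-i}:s_i\in\tau_{\m}(x)\}$, and Lemma~\ref{bidi-lem}. You are more explicit than the paper about pulling BE1 back. Two small slips there:
\begin{itemize}
\item Proposition~\ref{skip prop}(a) is the converse of what you need to exclude $t=s_2$. It is simpler to note that $s_2\rho(x)s_2$ pairs $1$ with $3$, so it lies in $Y_2$.
\item Proposition~\ref{rho-prop} only asserts that $\rho$ preserves order. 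That is enough, because $z$ and $s_izs_i$ are always comparable.
\end{itemize}

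\textbf{The genuine gap is the converse half of the molecule statement.} Transporting paths edge by edge proves $x\sim y\Rightarrow\rho(x)\sim\rho(y)$. But $\rho(x)\sim\rho(y)$ only gives a bidirected path somewhere in $\cF_n$, while your correspondence only covers paths lying inside $Y_1$.
\begin{itemize}
\item Such paths really do leave $Y_1$: every $z\in Y_1$ is joined by a bidirected edge to $\nu_2(z)\in Y_2$ (Corollary~\ref{tau-cor} with $j=2$, as in the proof of Theorem~\ref{nu-2-thm}).
\item The parts of a path outside $Y_1$ are not controlled by edges inside $Y_1$. In Remark~\ref{hori-stair-exception}, $x$ and $s_5xs_5$ both lie in $Y_1$ and lie in different molecules (one is nonnesting, the other is not). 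Yet $\nu_4(x)\leftrightarrow\nu_4(s_5xs_5)$, so an excursion out of $Y_1$ cannot simply be projected back.
\end{itemize}
Injectivity of $\rho$ says nothing about this. The paper's proof settles the molecule statement with the same one-line remark, so it has the same gap. Corollary~\ref{merge-cor}, and through it Theorem~\ref{C1-thm1}, rest on exactly this converse.

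\textbf{How to close it.} You need an invariant of molecules that is defined on all of $\cF_n$. The insertion shape works.
\begin{itemize}
\item By Theorem~\ref{shape-molecule-class}, applied in both $\cF_n$ and $\cF_{n-2}$, molecules are the fibres of $\operatorname{sh}_{\m}$. On involutions this equals the Robinson--Schensted shape $\operatorname{sh}$; this is the Beissinger fact invoked in the proof of Theorem~\ref{between-mole-thm}.
\item By Lemma~\ref{shape-transport-lem}, $\operatorname{sh}(\rho(x))$ is $\operatorname{sh}(x)$ with one box added to each of the first two rows. Strictly, $\rho(x)=21\oplus\theta_{n-2}(x)$ rather than $21\oplus x$. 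This does not matter, because conjugation by the longest element rotates the permutation diagram by $180^\circ$ and so preserves Greene's invariants.
\item Since $\lambda\mapsto\lambda+(1,1)$ is injective on partitions, $\rho(x)\sim\rho(y)$ forces $\operatorname{sh}(x)=\operatorname{sh}(y)$, hence $x\sim y$.
\end{itemize}
With this added, your edge-level argument together with the shape argument proves the full proposition.
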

\begin{proof}
For $1\leq i\leq n-3$,
\[
\rho(s_ixs_i)=s_{n-i}\rho(x)s_{n-i}.
\]
The forced cycle $(1,2)$ in $\rho(x)$ also gives
\[
\tau_\m(\rho(x))
 =\{s_1\}\cup
 \{s_{n-i}:s_i\in\tau_\m(x)\}.
\]
Thus $\rho$ preserves condition \textit{BE1}, and it preserves both
inclusions and non-inclusions between the corresponding
$\tau_\m$-sets. Lemma~\ref{bidi-lem} now gives the equivalence of
bidirected edges. Applying this equivalence edge by edge to a path,
and using the injectivity of $\rho$, gives the assertion about
molecules.
\end{proof}

For $1 \leq j \leq n-1$, let $\tau_j(x)= \tau_\m(\nu_{j} (x))$ where $x \in Y_1$. To discuss how the map $\nu_j$ act on the set $\tau_\m$, we have the following lemma:
\begin{lemma}\label{tau-lem1}
For $2\leq j\leq n-2$,
$\tau_{j-1}(x)\subset\tau_j(x)$ if and only if
$s_j\in\tau_\m(x)$ and $s_{j+1}\notin\tau_\m(x)$. Moreover,
$\tau_{n-2}(x)\subset\tau_{n-1}(x)$ if and only if
$s_{n-1}\in\tau_\m(x)$.
\end{lemma}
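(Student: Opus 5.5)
The plan is to make $\tau_j(x)$ completely explicit in terms of $\tau_\m(x)$ and then compare $\tau_{j-1}(x)$ with $\tau_j(x)$ one generator at a time.

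\textbf{Step 1: a descent criterion.} For $z\in\cF_n$ and $s_i\in S$ we have
\[
s_i\in\tau_\m(z)\iff z(i)>z(i+1).
\]
This is standard, and I will take it as known rather than re-derive it. If $z(i+1)=i$ then $s_izs_i=z$. Otherwise $\ell(s_izs_i)-\ell(z)=\pm2$, with the sign determined by whether $s_i$ is a right descent of $z$. Since $\h=\ell/2$ and the Bruhat order refines height, $s_izs_i\le z$ exactly when the height does not go up.

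\textbf{Step 2: the one-line notation of $w=\nu_j(x)$.} As a permutation, $\sigma_j=s_j\cdots s_1$ sends $1\mapsto j+1$ and $k\mapsto k-1$ for $2\le k\le j+1$, and it fixes every $k\ge j+2$. In particular $\sigma_j$ is increasing on $\{2,\dots,n\}$. Since $w(\sigma_j(a))=\sigma_j(x(a))$ and $x(1)=2$, we get:
\begin{itemize}
\item $w(1)=j+1$ and $w(j+1)=1$;
\item $w(p)=\sigma_j(x(p+1))$ for $1\le p\le j$;
\item $w(p)=\sigma_j(x(p))$ for $p\ge j+2$.
\end{itemize}
Value $1$ of $x$ sits only at position $2$. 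So every comparison not involving position $1$ or position $j+1$ of $w$ is the same as the corresponding comparison for $x$.

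\textbf{Step 3: the formula for $\tau_j(x)$.} Combining Steps 1 and 2 (this is also what Lemma~\ref{nu-lem} predicts for the shift of indices), I expect
\[
\tau_j(x)=\{s_1: x(3)\le j+1\}\cup\{s_{i-1}: s_i\in\tau_\m(x),\ 3\le i\le j\}\cup\{s_j\}\cup\{s_i: s_i\in\tau_\m(x),\ i\ge j+2\}.
\]
The pieces come from the following checks.
\begin{itemize}
\item Since $w(j+1)=1$, the generator $s_j$ always lies in $\tau_j(x)$.
\item If $j+1<n$, then $s_{j+1}$ never lies in $\tau_j(x)$.
\item For $s_1$ we compare $w(1)=j+1$ with $w(2)=\sigma_j(x(3))$. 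This holds exactly when $x(3)\le j+1$.
\item Because $x$ contains the cycle $(1,2)$, we have $x(2)=1<x(3)$, so $s_2\notin\tau_\m(x)$.
\end{itemize}

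\textbf{Step 4: compare $\tau_{j-1}(x)$ with $\tau_j(x)$.} Write down $\tau_{j-1}(x)$ by the same formula with $j-1$ in place of $j$, and run through its members.
\begin{itemize}
\item $s_1$: the condition $x(3)\le j$ implies $x(3)\le j+1$, so $s_1$ survives.
\item $s_{i-1}$ for $3\le i\le j-1$: these appear identically in both sets.
\item $s_{j-1}$: it always lies in $\tau_{j-1}(x)$, and it lies in $\tau_j(x)$ if and only if $s_j\in\tau_\m(x)$. For $j\ge3$ this is read off the middle term. For $j=2$ it becomes $x(3)\le3$, which is impossible, matching $s_2\notin\tau_\m(x)$.
\item $s_{j+1}$: it lies in $\tau_{j-1}(x)$ if and only if $s_{j+1}\in\tau_\m(x)$, but it never lies in $\tau_j(x)$ when $j\le n-2$. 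So inclusion forces $s_{j+1}\notin\tau_\m(x)$.
\item $s_i$ for $i\ge j+2$: these agree in both sets.
\end{itemize}
This gives the first equivalence. For $j=n-1$ there is no $s_{j+1}$, so only the condition on $s_{n-1}$ remains, which is the second statement.

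The main obstacle is Step 3: correctly handling the boundary positions $1$, $j$, $j+1$, $j+2$ of $w$ and the small cases $j=2,3$. For these I would check the formula directly on $\cF_6$ using the example $Y_1,\dots,Y_5$ listed above. I read $\subset$ in the statement as $\subseteq$.
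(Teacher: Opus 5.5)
Your proposal is correct and follows essentially the same route as the paper: both compare the descent sets of $\nu_{j-1}(x)$ and $\nu_j(x)$ generator by generator, using the criterion $s_i\in\tau_\m(z)\iff z(i)>z(i+1)$ and the relabeling by $\sigma_j$. Your explicit formula for $\tau_j(x)$ is in fact slightly more careful than the paper's argument. It records that $s_1$ lies in $\tau_j(x)\setminus\tau_{j-1}(x)$ exactly when $x(3)=j+1$, whereas the paper's claim that ``all other generators have the same membership'' overlooks this case; the discrepancy is harmless for the inclusion.
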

\begin{proof}
Write $u_t=\nu_t(x)$. Since $x\in Y_1$, it contains the cycle
$(1,2)$. Hence $u_{j-1}$ contains $(1,j)$ and $u_j$ contains
$(1,j+1)$. The descent criterion for fixed-point-free involutions
gives
\[
s_{j-1}\in\tau_\m(u_{j-1}),\quad
s_j\notin\tau_\m(u_{j-1}),\quad
s_j\in\tau_\m(u_j),\quad
s_{j+1}\notin\tau_\m(u_j).
\]
The relabeling by $\sigma_j$ also gives
\[
s_{j-1}\in\tau_\m(u_j)
 \Longleftrightarrow s_j\in\tau_\m(x),
\qquad
s_{j+1}\in\tau_\m(u_{j-1})
 \Longleftrightarrow s_{j+1}\in\tau_\m(x).
\]
All other generators have the same membership in
$\tau_\m(u_{j-1})$ and $\tau_\m(u_j)$. Therefore every descent of
$u_{j-1}$ is a descent of $u_j$ exactly when the two conditions in
the statement hold. The inclusion is strict because
$s_j\notin\tau_\m(u_{j-1})$ and $s_j\in\tau_\m(u_j)$.

For the terminal pair, the same relabeling comparison between
$\nu_{n-2}(x)$ and $\nu_{n-1}(x)$ shows that the only possible
obstruction to inclusion is the descent $s_{n-1}$ of $x$. This gives
the final equivalence.
\end{proof}

\begin{corollary}\label{tau-cor}
For $x\in Y_1$ and $2 \leq j \leq n-2$, there is a bidirected edge $\nu_{j-1}(x) \leftrightarrow \nu_{j}(x)$ if and only if $s_{j+1} \in \tau_\m(x)$ or $s_j \notin \tau_\m(x)$. Moreover, $\nu_{n-2}(x) \leftrightarrow \nu_{n-1}(x)$ if and only if $s_{n-1} \notin \tau_\m(x)$.
\end{corollary}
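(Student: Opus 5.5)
The plan is to apply Lemma~\ref{bidi-lem} to the pair $\nu_{j-1}(x)\le\nu_j(x)$ and then translate its condition \textit{BE2} into a condition on $x$ using Lemma~\ref{tau-lem1}.

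First, check that the pair is correctly ordered and satisfies \textit{BE1}. By definition, $\nu_j(x)=s_j\nu_{j-1}(x)s_j$. The proof of Corollary~\ref{nu-cor} shows that $\nu_{j-1}(x)\lessdot\nu_j(x)$, with heights differing by exactly one. So $\nu_{j-1}(x)\le\nu_j(x)$, and \textit{BE1} holds with $s=s_j$. Lemma~\ref{bidi-lem} then says that a bidirected edge $\nu_{j-1}(x)\leftrightarrow\nu_j(x)$ exists if and only if \textit{BE2} holds, namely $\tau_{j-1}(x)\nsubseteq\tau_j(x)$. Here $\tau_t(x)=\tau_\m(\nu_t(x))$ as defined before Lemma~\ref{tau-lem1}.

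Next, rewrite \textit{BE2} for $2\le j\le n-2$. The proof of Lemma~\ref{tau-lem1} shows that $s_j\in\tau_j(x)\setminus\tau_{j-1}(x)$, so the two $\tau$-sets are never equal. Hence inclusion $\subseteq$ is the same as strict inclusion $\subset$, and Lemma~\ref{tau-lem1} gives
\[
\tau_{j-1}(x)\subseteq\tau_j(x)\iff s_j\in\tau_\m(x)\text{ and } s_{j+1}\notin\tau_\m(x).
\]
Negating both sides gives $\tau_{j-1}(x)\nsubseteq\tau_j(x)$ if and only if $s_j\notin\tau_\m(x)$ or $s_{j+1}\in\tau_\m(x)$, which is the first claim.

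The terminal pair $j=n-1$ is handled the same way. Corollary~\ref{nu-cor} gives $\nu_{n-2}(x)\lessdot\nu_{n-1}(x)$ via $s_{n-1}$, so \textit{BE1} holds. The second part of Lemma~\ref{tau-lem1} gives $\tau_{n-2}(x)\subseteq\tau_{n-1}(x)$ if and only if $s_{n-1}\in\tau_\m(x)$. Negating, the edge exists exactly when $s_{n-1}\notin\tau_\m(x)$.

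The main point needing care is that Lemma~\ref{tau-lem1} is stated with $\subset$ while \textit{BE2} uses $\nsubseteq$. This gap is closed by observing that $s_j$ is a descent of $\nu_j(x)$ but not of $\nu_{j-1}(x)$. That observation comes from the cycles $(1,j+1)$ and $(1,j)$ contained in these involutions, and applies equally in the terminal case with $j=n-1$.
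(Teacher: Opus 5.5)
Your proposal is correct and follows the paper's proof. \textit{BE1} comes from $\nu_j(x)=s_j\nu_{j-1}(x)s_j$, and \textit{BE2} is translated through Lemma~\ref{tau-lem1} using $s_j\in\tau_j(x)\setminus\tau_{j-1}(x)$; you also make explicit two points the paper leaves implicit, namely that $\nu_{j-1}(x)\lessdot\nu_j(x)$ (so Lemma~\ref{bidi-lem} is applied with the pair in the right order) and that this descent observation converts the strict inclusion of Lemma~\ref{tau-lem1} into the non-strict inclusion appearing in \textit{BE2}.
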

\begin{proof}
Since $\nu_j(x)=s_j \nu_{j-1}(x) s_j$, so we satisfied the condition \textit{BE1}. By the discussion in the proof of Lemma~\ref{tau-lem1}, we see that $\tau_{j}(x)\nsubseteq\tau_{j-1}(x)$ since $s_{j}\not\in \tau_{j-1}(x)$ and $s_{j}\in \tau_{j}(x)$. According to Lemma~\ref{tau-lem1}, $\tau_{j-1}(x) \subset \tau_j(x)$ if and only if $s_j \in \tau_\m(x)$ and $s_{j+1} \notin \tau_\m(x)$. Hence $\tau_{j-1}(x) \nsubseteq \tau_j(x)$ if and only if $s_{j+1} \in \tau_\m(x)$ or $s_j \notin \tau_\m(x)$.

The case when $\nu_{n-2}(x) \leftrightarrow \nu_{n-1}(x)$ is proved similar.
\end{proof}

\begin{theorem}\label{nu-2-thm}
If $x,y \in Y_1$, $x \sim y$ if and only if $\nu_2(x) \sim \nu_2(y)$.
\end{theorem}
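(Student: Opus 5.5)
The natural strategy is to compare bidirected edges in $Y_1$ with bidirected edges near $Y_2$ one at a time, using Lemma~\ref{bidi-lem} as the edge criterion throughout. The cheap case comes first. By Corollary~\ref{tau-cor} with $j=2$, $x=\nu_1(x)\leftrightarrow\nu_2(x)$ holds whenever $s_3\in\tau_\m(x)$ or $s_2\notin\tau_\m(x)$. Let $B$ be the set of $x\in Y_1$ for which this edge exists. If both $x,y\in B$, then both directions of the theorem follow from transitivity of $\sim$.

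So the real work concerns $x\in Y_1\setminus B$, i.e.\ those with $s_2\in\tau_\m(x)$ and $s_3\notin\tau_\m(x)$. For these I would prove an edge-transport statement: if $x\mleftrightarrow{s}x'$ with $x,x'\in Y_1$, then $\nu_2(x)\sim\nu_2(x')$.

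\emph{Translating edges.} By Proposition~\ref{skip prop}(a), $s\neq s_2$. Also $s\neq s_1$, since $s_1$ fixes $Y_1$. So $s=s_i$ with $i\geq 3$, and Lemma~\ref{nu-lem} splits the argument into two cases.
\begin{itemize}
\item[(i)] If $i\geq 4$, then $\nu_2(x')=s_i\nu_2(x)s_i$, so BE1 holds. For BE2, I would write $\tau_\m(\nu_2(x))$ explicitly as a relabeling of $\tau_\m(x)$ on $\{s_1,s_2,s_3\}$, as in the proof of Lemma~\ref{tau-lem1}. On $s_j$ with $j\geq4$ the two sets agree, and since $\nu_2(x)$ contains $(1,3)$ the memberships of $s_1,s_2$ are forced to be the same for $\nu_2(x)$ and $\nu_2(x')$. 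So a non-inclusion $\tau_\m(x)\nsubseteq\tau_\m(x')$ witnessed at some $s_j$, $j\ge4$, transfers directly. The remaining witnesses live at $s_2$ or $s_3$, and I would check these by hand.
\item[(ii)] If $i=3$, then $\nu_2(x')=r\,\nu_2(x)\,r$ with $r=s_2s_3s_2$, which is not simple, so there is no direct edge. Instead I would detour through $Y_3$. Lemma~\ref{nu-lem} with $j=3$, $i=3$ gives $\nu_3(x')=s_2\nu_3(x)s_2$. The intended path is
\[
\nu_2(x)\leftrightarrow\nu_3(x)\leftrightarrow\nu_3(x')\leftrightarrow\nu_2(x').
\]
Corollary~\ref{tau-cor} with $j=3$ decides the outer two edges through the conditions $s_4\in\tau_\m(\cdot)$ or $s_3\notin\tau_\m(\cdot)$. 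One of $x,x'$ lacks the descent $s_3$, which should make at least one outer edge automatic. The middle edge is again checked through the relabeled $\tau$-sets.
\end{itemize}

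For the converse, I would run the same two steps in reverse. Take an edge $\nu_2(x)\mleftrightarrow{s}\nu_2(x')$ inside $Y_2$. Proposition~\ref{skip prop}(b) says that conjugation by $s\notin\{s_1,s_2,s_3\}$ keeps us in $Y_2$, and for such $s$ we pull back via Lemma~\ref{nu-lem} to an $s$-edge in $Y_1$.

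The main obstacle is that a molecule path between $\nu_2(x)$ and $\nu_2(y)$, or between $x$ and $y$, may leave $Y_1\cup Y_2$ through $s_1$, $s_2$ or $s_3$. I would handle this by showing that each excursion into $Y_k$ can be pushed back down. Every $z\in Y_k$ is $\nu_k(w)$ for some $w\in Y_1$. Combining Corollary~\ref{tau-cor} along the chain $\nu_1(w),\nu_2(w),\dots,\nu_k(w)$ with the $\nu_k$-version of the edge transport above, I would try to replace the excursion by a path inside $Y_1$, or inside $Y_2$, joining the same endpoints. If this reduction becomes too messy, the fallback is to argue by induction on $n$ and use Proposition~\ref{rho-bi-prop} to identify the molecules of $Y_1$ with those of $\cF_{n-2}$.
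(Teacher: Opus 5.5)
Your first paragraph already contains the whole proof, but you did not notice this. As written, the proposal leaves its self-declared ``real work'' unproven. The by-hand checks at $s_2,s_3$ in case (i), the detour through $Y_3$ in case (ii), and the step of pushing excursions out of $Y_1\cup Y_2$ back down are only announced, and you give a fallback in case they fail. So the proposal is not a complete argument.

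The missing idea is a one-line observation. Every $x\in Y_1$ contains the cycle $(1,2)$, so $x(2)=1$ and $x(3)\geq 4$. The descent criterion $\tau_\m(x)=\{s_i:x(i)>x(i+1)\}$ then gives $s_2\notin\tau_\m(x)$. Hence the set $Y_1\setminus B$ you isolate (those $x$ with $s_2\in\tau_\m(x)$ and $s_3\notin\tau_\m(x)$) is empty, and $B=Y_1$. Corollary~\ref{tau-cor} with $j=2$ gives a bidirected edge $x\leftrightarrow\nu_2(x)$ for every $x\in Y_1$.

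With that, both implications follow from transitivity: $\nu_2(x)\sim x\sim y\sim\nu_2(y)$, and conversely $x\sim\nu_2(x)\sim\nu_2(y)\sim y$. This is exactly the paper's proof.

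Your ``main obstacle'' never arises. The relation $\sim$ is the molecule relation on all of $\cF_n$, so nothing needs to be transported along paths, and it does not matter where a path between $x$ and $y$ wanders.

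You should drop the edge-transport machinery entirely; it is also doubtful it would work as sketched. In case (i), a witness $s_3\in\tau_\m(x)\setminus\tau_\m(x')$ has no simple counterpart for $\nu_2(\cdot)$. By Lemma~\ref{nu-lem}, $s_3$ becomes the non-simple $s_2s_3s_2$, and $s_3\notin\tau_\m(\nu_2(z))$ for every $z\in Y_1$. In case (ii) you need both outer edges $\nu_2(\cdot)\leftrightarrow\nu_3(\cdot)$. The one attached to whichever of $x,x'$ has the descent $s_3$ exists only if $s_4$ is also a descent, which can fail.
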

\begin{proof}
For every $x\in Y_1$, Theorem~\ref{recurs-thm} gives $x(1)=2$.
Since $x$ is a fixed-point-free involution, this implies $x(2)=1$ and
$x(3)\geq4$. The usual descent criterion therefore gives
$s_2\notin\tau_\m(x)$. Corollary~\ref{tau-cor}, with $j=2$, now gives a
bidirected edge
\[
x=\nu_1(x)\leftrightarrow\nu_2(x).
\]
Thus $x\sim\nu_2(x)$ for every $x\in Y_1$. If $x\sim y$, transitivity
gives
\[
\nu_2(x)\sim x\sim y\sim\nu_2(y).
\]
Conversely, if $\nu_2(x)\sim\nu_2(y)$, then
\[
x\sim\nu_2(x)\sim\nu_2(y)\sim y.
\]
This proves both implications.
\end{proof}

\begin{example}
Recall the $\cF_n$-minimal element $x_1=s_1s_3\cdots s_{n-1} \in Y_1$. We calculate that $\tau_\m(x_1)=\{s_1,s_3,\cdots,s_{n-1}\}$. Then we can obtain a directed path from $x_1$ to $\nu_{n-1}(x_1)$ by Corollary~\ref{tau-cor}, which is
    $$ x_1 \leftrightarrow \nu_2(x_1) \leftarrow \nu_3(x_1) \leftrightarrow \nu_4(x_1) \leftarrow \cdots \leftarrow \nu_{n-2}(x_1) \leftrightarrow \nu_{n-1}(x_1).$$
\end{example}

\begin{theorem}\label{nu-n-thm}
Let $x,y\in Y_1$. If $x<y$, then there is a bidirected edge
$x\leftrightarrow y$ if and only if there is a bidirected edge
$\nu_{n-1}(x)\leftrightarrow\nu_{n-1}(y)$. In particular, for
arbitrary $x,y\in Y_1$,
\[
x\sim y\quad\Longleftrightarrow\quad
\nu_{n-1}(x)\sim\nu_{n-1}(y).
\]
\end{theorem}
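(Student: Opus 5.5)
The plan is to verify the two conditions of Lemma~\ref{bidi-lem} directly and transport them along $\nu_{n-1}$. Concretely, $\nu_{n-1}$ is conjugation by $\sigma_{n-1}=s_{n-1}\cdots s_1$. As a permutation, $\sigma_{n-1}(1)=n$ and $\sigma_{n-1}(k)=k-1$ for $k\geq 2$. So $\nu_{n-1}(x)$ is obtained from $x$ by relabelling $1\mapsto n$ and $k\mapsto k-1$ for $k\geq 2$. For $x\in Y_1$ this turns the cycle $(1,2)$ into $(1,n)$ and shifts every other cycle down by one.

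\emph{Condition BE1 and the order.} Suppose $x<y$ in $Y_1$ with $y=sxs$ for some $s\in S$. Conjugation by $s_1$ fixes every element of $Y_1$, and Proposition~\ref{skip prop}(a) excludes $s_2$, so $s=s_i$ with $3\leq i\leq n-1$. Lemma~\ref{nu-lem} with $j=n-1$ then gives $\nu_{n-1}(y)=s_{i-1}\nu_{n-1}(x)s_{i-1}$.

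Conversely, suppose $\nu_{n-1}(y)=s\,\nu_{n-1}(x)\,s$ with both elements in $Y_{n-1}$. Conjugation by $s_1$ or $s_{n-1}$ changes the value paired with $1$, by the argument in the proof of Proposition~\ref{skip prop}, so it cannot relate two distinct elements of $Y_{n-1}$. Hence $s=s_k$ with $2\leq k\leq n-2$, and it pulls back to $s_{k+1}$.

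For the order, I use that $\h(\nu_{n-1}(z))=\h(z)+n-2$ for all $z\in Y_1$ (proof of Proposition~\ref{nuj-prop}). Together with Proposition~\ref{nuj-prop} and BE1, this shows that $x<y$ holds exactly when $\nu_{n-1}(x)<\nu_{n-1}(y)$, for pairs related by a simple conjugation.

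\emph{Condition BE2.} I would use the descent criterion for FPF involutions, $s_i\in\tau_\m(z)\iff z(i)>z(i+1)$, as in the proof of Lemma~\ref{tau-lem1}. For $x\in Y_1$ we always have $s_1\in\tau_\m(x)$ and $s_2\notin\tau_\m(x)$. For $\nu_{n-1}(x)$ we always have $s_{n-1}\in\tau_\m$, since $\nu_{n-1}(x)(n)=1$, and $s_1\notin\tau_\m$, since $\nu_{n-1}(x)(1)=n$. On positions $i\geq 3$ of $x$ the relabelling is order-preserving both on positions and on values, because all these values are $\geq 3$. This gives
\[
\tau_\m(\nu_{n-1}(x))=\{s_{n-1}\}\cup\{s_{i-1}: i\geq 3,\ s_i\in\tau_\m(x)\},\qquad
\tau_\m(x)=\{s_1\}\cup\{s_i: i\geq 3,\ s_i\in\tau_\m(x)\}.
\]
Thus each $\tau$-set is a fixed common element together with a bijective image of the same set $\{i\geq 3: s_i\in\tau_\m(x)\}$. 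Consequently $\tau_\m(x)\nsubseteq\tau_\m(y)$ if and only if $\tau_\m(\nu_{n-1}(x))\nsubseteq\tau_\m(\nu_{n-1}(y))$. Combined with the BE1 step and Lemma~\ref{bidi-lem}, this proves the edge statement.

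\emph{The molecule statement; the main obstacle.} The edge equivalence immediately transports any path of bidirected edges lying inside $Y_1$ to a path inside $Y_{n-1}$, and conversely. The difficulty is that a molecule path from $x$ to $y$ may leave $Y_1$ and pass through other $Y_j$, or through elements outside $Y_{n-1}$. The proof of Theorem~\ref{nu-2-thm} avoided this because $x\sim\nu_2(x)$ always holds; here Corollary~\ref{tau-cor} gives the edge $\nu_{n-2}(x)\leftrightarrow\nu_{n-1}(x)$ only when $s_{n-1}\notin\tau_\m(x)$, so that shortcut is unavailable.

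What I would try first is to use the Marberg--Zhang classification from \cite{MZ3}: molecules are the fibres of the shape of the FPF insertion tableau. It would then suffice to show that the shape of $\nu_{n-1}(x)$ is an injective function of the shape of $x$ for $x\in Y_1$. The natural guess is that it comes from adding the pair $\{1,n\}$ in a fixed way rather than the pair $\{1,2\}$. Failing that, I would show that every molecule meeting $Y_1$ is connected through a path staying inside $Y_1$, for instance by reducing via $\rho$ and Proposition~\ref{rho-bi-prop}, and then apply the edge equivalence path by path.
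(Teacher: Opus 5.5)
Your argument for the edge statement is essentially the paper's. BE1 is transported by Lemma~\ref{nu-lem}: $s_i\mapsto s_{i-1}$ for $3\le i\le n-1$, and back, because conjugation by $s_1$ or $s_{n-1}$ always moves an element out of $Y_{n-1}$. BE2 is transported by an explicit formula for $\tau_\m(\nu_{n-1}(x))$. Your remark that the constant height shift $n-2$ preserves which endpoint is lower is a detail the paper leaves implicit.

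One claim is false, though harmless. Since $\nu_{n-1}(x)(1)=n$ is the \emph{largest} value, we get $\nu_{n-1}(x)(1)>\nu_{n-1}(x)(2)$, so $s_1\in\tau_\m(\nu_{n-1}(x))$. The correct formula is the paper's: $\tau_\m(\nu_{n-1}(x))=\{s_1,s_{n-1}\}\cup\{s_{i-1}:3\le i\le n-1,\ s_i\in\tau_\m(x)\}$. Both $s_1$ and $s_{n-1}$ lie in every such set, and the variable part lies in $\{s_2,\dots,s_{n-2}\}$. So your inclusion comparison goes through unchanged.

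The genuine gap is the molecule statement, where you stop at a plan. The paper settles it in one sentence by applying the edge equivalence along paths. That is exactly the inference you rightly distrust: bidirected paths can leave $Y_1$ (for instance $x\leftrightarrow\nu_2(x)$ always holds), and they can leave $Y_{n-1}$. Your first idea closes the gap at once using results already in the paper.

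Write $x=\rho(u)$ and $y=\rho(u')$ with $u,u'\in\cF_{n-2}$ of shapes $\lambda,\mu$. Row Beissinger insertion computes the Robinson--Schensted tableau of an involution. So by Theorem~\ref{shape-molecule-class} the molecules are exactly the fibres of the RS shape. Lemma~\ref{shape-transport-lem} gives $\operatorname{sh}(x)=(\lambda_1+1,\lambda_2+1,\lambda_3,\dots)$ and $\operatorname{sh}(\nu_{n-1}(x))=(\lambda_1,\dots,\lambda_\ell,1,1)$. These follow from Greene's theorem applied to $x=21\oplus\theta_{n-2}(u)$ and $\nu_{n-1}(x)=1\ominus\theta_{n-2}(u)\ominus1$, since conjugation by the longest element does not change the shape. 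Both assignments are injective in $\lambda$, so $x\sim y\iff\lambda=\mu\iff\nu_{n-1}(x)\sim\nu_{n-1}(y)$. The edge equivalence is not needed for this part.

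Your fallback would not suffice by itself. Connectivity of each molecule inside $Y_1$ gives only the forward implication; the converse needs the analogous fact inside $Y_{n-1}$. Moreover, deriving that connectivity from Proposition~\ref{rho-bi-prop} requires the implication $\rho(u)\sim\rho(u')\Rightarrow u\sim u'$. The paper proves that implication by the same path-transport step.
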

\begin{proof}
Every $z\in Y_1$ contains $(1,2)$, so
$s_1\in\tau_\m(z)$ and $s_2\notin\tau_\m(z)$. Directly from the
relabeling by $\sigma_{n-1}$,
\[
\tau_\m(\nu_{n-1}(z))
 =\{s_1,s_{n-1}\}\cup
 \{s_{i-1}:3\leq i\leq n-1,\ s_i\in\tau_\m(z)\}.
\]
Consequently, inclusion or non-inclusion of the $\tau_\m$-sets of
two elements of $Y_1$ is preserved by $\nu_{n-1}$.

A nontrivial simple conjugation between two elements of $Y_1$ must
use some $s_i$ with $3\leq i\leq n-1$, and Lemma~\ref{nu-lem} sends
it to conjugation by $s_{i-1}$. Conversely, a simple conjugation
between two elements of $Y_{n-1}$ that remains in $Y_{n-1}$ uses
$s_i$ with $2\leq i\leq n-2$ and pulls back to $s_{i+1}$. Thus
condition \textit{BE1} is equivalent on the two sides, while the
displayed formula makes condition \textit{BE2} equivalent.
Lemma~\ref{bidi-lem} proves the assertion about edges. Applying the
edge equivalence along paths proves the assertion about molecules.
\end{proof}


The following result shows that each molecule meets the sequence
$\nu_1(x),\nu_2(x),\ldots,\nu_{n-1}(x)$ in an interval.
\begin{theorem}\label{between-mole-thm}
For $x \in Y_1$ and $1\le i<j\le n-1$, if $\nu_i(x)\sim \nu_j(x)$, then $\nu_k(x)\sim \nu_i(x)$ for $i\le k\le j$.
\end{theorem}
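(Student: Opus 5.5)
The plan is to reduce the statement to a monotonicity property along the chain
\[
\nu_1(x)<\nu_2(x)<\cdots<\nu_{n-1}(x)
\]
(Corollary~\ref{nu-cor}), measured by a molecule invariant. By the classification of Marberg and Zhang \cite{MZ3}, two elements of $\cF_n$ lie in the same molecule exactly when their FPF insertion tableaux have the same shape. Write $\lambda_k$ for the shape attached to $\nu_k(x)$. The core claim is that $k\mapsto\lambda_k$ is weakly monotone in the dominance order $\trianglelefteq$: I would aim to prove $\lambda_{k-1}\trianglelefteq\lambda_k$ for every $2\le k\le n-1$, with the direction to be fixed on a small example such as the path displayed for $x_1$. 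Granting this, if $\nu_i(x)\sim\nu_j(x)$ then $\lambda_i=\lambda_j$. For $i\le k\le j$ we then get $\lambda_i\trianglelefteq\lambda_k\trianglelefteq\lambda_j=\lambda_i$, so $\lambda_k=\lambda_i$ by antisymmetry, and hence $\nu_k(x)\sim\nu_i(x)$ by the classification again.

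To prove monotonicity I would split each step $k-1\to k$ according to Corollary~\ref{tau-cor}.

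\emph{Bidirected steps.} If $s_{k+1}\in\tau_\m(x)$ or $s_k\notin\tau_\m(x)$ (and, in the terminal case, $s_{n-1}\notin\tau_\m(x)$), then $\nu_{k-1}(x)\leftrightarrow\nu_k(x)$. These two elements are in the same molecule, so $\lambda_{k-1}=\lambda_k$ and nothing needs to be shown.

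\emph{Non-bidirected steps.} The remaining case is $s_k\in\tau_\m(x)$ and $s_{k+1}\notin\tau_\m(x)$. Here $\nu_k(x)=s_k\nu_{k-1}(x)s_k$ differs from $\nu_{k-1}(x)$ only by swapping the partners of $k$ and $k+1$: the cycle $(1,k)$ becomes $(1,k+1)$, and the partner of $k+1$ moves to $k$. I would compute the effect on the insertion shape directly. The descent conditions on $x$ pin down the relative order of the partners involved. I expect this to reduce to an elementary dual-Knuth-type move on the word of arcs, whose effect on the shape is either trivial or moves one box to a higher row, in a fixed direction. Greene-type characterizations, via maximal unions of nesting or crossing arc families, seem the cleanest way to compare the shapes.

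The main obstacle is this non-bidirected case: controlling the FPF insertion shape under a single simple conjugation that is not a molecule edge.

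If dominance monotonicity turns out to be false or too hard, my fallback is a coarser invariant that still separates molecules along this particular chain, namely the number of rows or the length of the first row of $\lambda_k$, read off as the longest nesting or crossing family of arcs through the arc at $1$. I would then show that this invariant weakly increases in $k$ and strictly changes at every non-bidirected step. That still yields the interval property by the same sandwich argument.
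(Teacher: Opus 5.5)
\paragraph{Verdict.} The proposal has a genuine gap: the monotonicity lemma is only conjectured, and the fallback would fail.

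\paragraph{The missing lemma.} Your outline is the paper's: classify molecules by insertion shape, show the shape is monotone in dominance order along $\nu_1(x),\dots,\nu_{n-1}(x)$, then use antisymmetry. Row Beissinger insertion agrees with Robinson--Schensted on involutions, so the invariant is the ordinary RS shape. But the monotonicity lemma carries the whole theorem, and you leave it as an expectation. The direction you wrote, $\lambda_{k-1}\trianglelefteq\lambda_k$, is also backwards.

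The missing idea is Greene's theorem applied to the one-line word.
\begin{itemize}
\item If $\ell(ws)=\ell(w)+1$, the word of $ws$ comes from that of $w$ by turning an adjacent pair $A<B$ into $B,A$.
\item Colour a largest union of $r$ increasing subsequences of $ws$, and move $A,B$ back together with their colours. If both are selected they have different colours, so every colour class stays increasing.
\item Hence $G_r(w)\ge G_r(ws)$ for all $r$, i.e.\ $\operatorname{sh}(w)\unrhd\operatorname{sh}(ws)$. By inversion, also $\operatorname{sh}(w)\unrhd\operatorname{sh}(sw)$ when $\ell(sw)=\ell(w)+1$.
\item Put $u_t=\nu_t(x)$ and $s=s_{t+1}$. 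Then $u_t(t+1)=1<u_t(t+2)$ and $(su_t)(t+1)=1<(su_t)(t+2)$. So $u_{t+1}=su_ts$ is reached by two length-increasing multiplications, and $\operatorname{sh}(u_t)\unrhd\operatorname{sh}(u_{t+1})$ for every $t$.
\end{itemize}
This treats all steps uniformly, so your split into bidirected and non-bidirected steps is unnecessary. With this lemma, your sandwich argument goes through verbatim.

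\paragraph{Why the fallback fails.} The split is also misleading, because a non-bidirected step need not change the shape at all. Take $n=8$ and $x=(1,2)(3,4)(5,7)(6,8)\in Y_1$, so $\tau_\m(x)=\{s_1,s_3,s_6\}$. Since $s_6\in\tau_\m(x)$ and $s_7\notin\tau_\m(x)$, Corollary~\ref{tau-cor} says that
\begin{itemize}
\item $\nu_5(x)=(1,6)(2,3)(4,7)(5,8)$, and
\item $\nu_6(x)=(1,7)(2,3)(4,6)(5,8)$
\end{itemize}
are not joined by a bidirected edge. Directly, $\tau_\m(\nu_5(x))=\{s_1,s_2,s_5\}\subsetneq\{s_1,s_2,s_5,s_6\}=\tau_\m(\nu_6(x))$.

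Yet RS insertion of their one-line words $63278145$ and $73268415$ gives the same shape $(3,3,1,1)$. So these two elements lie in one molecule via some other path. Neither the first-row length nor the number of rows changes at this non-bidirected step. Any argument requiring a strict change there is doomed; only weak monotonicity holds, and Greene's argument supplies exactly that.
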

\begin{proof}
For partitions of the same integer, write $\lambda\unrhd\mu$ for
dominance order. We first recall the following consequence of Greene's
theorem: if $w\in S_n$ and $s\in S$ satisfy
$\ell(ws)=\ell(w)+1$, then
\begin{equation}\label{weak-shape-monotonicity}
 \operatorname{sh}(w)\unrhd\operatorname{sh}(ws).
\end{equation}
Indeed, the one-line word of $ws$ is obtained from that of $w$ by changing
two adjacent entries $A<B$ into $B,A$. For $r\geq1$, let $G_r(v)$ be the
maximum size of a union of $r$ increasing subsequences of the one-line word
of $v$. Take such a union in the word containing $B,A$, color its entries
with $r$ colors so that every color class is increasing, and move each
selected value among $A,B$ together with its color when the two adjacent
positions are swapped. If both values are selected, they have different
colors since $B>A$. Hence no selected value crosses another value of its own
color, so all color classes remain increasing. Thus
$G_r(w)\geq G_r(ws)$ for every $r$. Greene's theorem identifies these
numbers with the partial row sums of the corresponding
Robinson--Schensted shapes, proving \eqref{weak-shape-monotonicity}.
Taking inverses also shows that
\begin{equation}\label{left-weak-shape-monotonicity}
 \ell(sw)=\ell(w)+1
 \quad\Longrightarrow\quad
 \operatorname{sh}(w)\unrhd\operatorname{sh}(sw),
\end{equation}
since $\operatorname{sh}(v^{-1})=\operatorname{sh}(v)$.

Set $u_t=\nu_t(x)$. We claim that
\begin{equation}\label{nu-shape-chain}
 \operatorname{sh}(u_t)\unrhd\operatorname{sh}(u_{t+1})
 \qquad(1\leq t\leq n-2).
\end{equation}
Let $s=s_{t+1}$. By definition, $u_{t+1}=su_ts$. Since $u_t\in Y_t$,
the cycle of $u_t$ containing $1$ is $(1,t+1)$, whence
\[
 u_t(t+1)=1<u_t(t+2).
\]
The usual length criterion gives $\ell(u_ts)=\ell(u_t)+1$. Moreover,
$(u_ts)^{-1}=su_t$ and
\[
 (su_t)(t+1)=1<(su_t)(t+2).
\]
The second value is greater than $1$ because $u_t$ is fixed-point-free and
$t+1$ is already paired with $1$. Hence
\[
 \ell(su_ts)=\ell((u_ts)^{-1}s)=\ell(u_ts)+1.
\]
Applying \eqref{weak-shape-monotonicity} and
\eqref{left-weak-shape-monotonicity} gives
\[
 \operatorname{sh}(u_t)
 \unrhd\operatorname{sh}(u_ts)
 \unrhd\operatorname{sh}(su_ts)
 =\operatorname{sh}(u_{t+1}),
\]
which proves \eqref{nu-shape-chain}.

Row Beissinger insertion agrees with ordinary Robinson--Schensted insertion
on involutions \cite[Theorem~3.1]{Beissinger}. By
Theorem~\ref{shape-molecule-class}, the hypothesis $u_i\sim u_j$ implies
\[
 \operatorname{sh}(u_i)=\operatorname{sh}(u_j).
\]
For every $i\leq k\leq j$, repeated application of
\eqref{nu-shape-chain} therefore gives
\[
 \operatorname{sh}(u_i)
 \unrhd\operatorname{sh}(u_k)
 \unrhd\operatorname{sh}(u_j)
 =\operatorname{sh}(u_i).
\]
Antisymmetry of dominance order forces
$\operatorname{sh}(u_k)=\operatorname{sh}(u_i)$. Another application of
Theorem~\ref{shape-molecule-class} gives $u_k\sim u_i$, as required.
\end{proof}

\section{Staircase lines}\label{stair-sect}


To figure out the elements of $Y_1$, define a map
\[
\phi: \cF_{n-4} \mapsto \cF_n, x\mapsto \rho_n \rho_{n-2}(x).
\]
Then $\phi(\cF_{n-4}) \subset Y_1$.  Let $\cF_{n-4}=Z_1 \bigsqcup \cdots \bigsqcup Z_{n-5}$ is the recursive decomposition of $\cF_{n-4}$. The map $\phi$ is also one-to-one order-preserving as $\rho$ and it maps $s_i$ to $ s_{i+2}$. Thus we have
\begin{equation}\label{eq1}
\phi(\cF_{n-4})=\phi(Z_1) \bigsqcup \cdots \bigsqcup \phi(Z_{n-5}).
\end{equation}

\begin{proposition}\label{vert-stair-prop}
Let $x\in\phi(\cF_{n-4})$ and $3\leq k\leq n-3$. Then
\[
s_1\nu_k(x)s_1=s_k\nu_k(x)s_k
\quad\Longleftrightarrow\quad
x\in\phi(Z_{k-2}).
\]
If $x\in\phi(Z_{k-2})$, then
\[
\tau_\m(\nu_{k-1}(x))
 \subset\tau_\m(\nu_k(x)),
\]
so $\nu_{k-1}(x)$ and $\nu_k(x)$ are not connected by a bidirected
edge.
\end{proposition}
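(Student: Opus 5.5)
The plan is to turn both claims into concrete statements about the one-line values of $x$, and then apply the descent criterion together with Lemma~\ref{tau-lem1}. First I will pin down what $\phi(\cF_{n-4})$ and $\phi(Z_{k-2})$ look like.

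\textbf{Describing $\phi(Z_{k-2})$.} Unwinding $\rho_{n-2}$ and then $\rho_n$, the two reversals compose to a shift by $2$ on $\{3,\dots,n-2\}$. This matches the statement that $\phi$ sends $s_i$ to $s_{i+2}$. So every $x\in\phi(\cF_{n-4})$ consists of:
\begin{itemize}
\item the forced cycles $(1,2)$ and $(n-1,n)$;
\item a copy of $z\in\cF_{n-4}$ shifted by $2$ onto $\{3,\dots,n-2\}$.
\end{itemize}
By Theorem~\ref{recurs-thm} applied to $\cF_{n-4}$, $z\in Z_{k-2}$ means $z(1)=k-1$. Hence $x\in\phi(Z_{k-2})$ if and only if $x(3)=k+1$.

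\textbf{The equivalence.} Next I compute $\nu_k(x)=\sigma_kx\sigma_k^{-1}$ as a relabeling. The map $\sigma_k$ sends $2\mapsto k+1$ and $i\mapsto i-1$ for $3\le i\le k+1$, and fixes all other labels. So $\nu_k(x)$ contains $(1,k+1)$, and the cycle $(3,m)$ of $x$, with $m=x(3)$, becomes $(2,\sigma_k(m))$.
\begin{itemize}
\item Conjugation by $s_1$ swaps the labels $1$ and $2$. It produces the cycles $(2,k+1)$ and $(1,\sigma_k(m))$ and leaves every other cycle unchanged.
\item Conjugation by $s_k$ swaps the labels $k$ and $k+1$. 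It produces $(1,k)$ and moves the partner of $2$.
\end{itemize}
Comparing the cycles containing $1$, equality of the two conjugates forces $\sigma_k(m)=k$, that is, $m=k+1$. Conversely, if $m=k+1$, then $\nu_k(x)$ contains $(1,k+1)$ and $(2,k)$. Both conjugations then yield $(1,k)(2,k+1)$ together with the same remaining cycles. Combined with the previous step, this proves the first claim.

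\textbf{The $\tau$-inclusion.} Assume $x(3)=k+1$, so $x(k+1)=3$. I use the descent criterion: $s_i\in\tau_\m(x)$ if and only if $x(i)>x(i+1)$.
\begin{itemize}
\item Since $k\ge3$, the value $x(k)$ is not $1$ or $2$, and it is not $3$ because $x^{-1}(3)=k+1$. Hence $x(k)>3=x(k+1)$, so $s_k\in\tau_\m(x)$. This also covers $k=3$, where $x(3)=4$.
\item Since $k+2\le n-1$, the value $x(k+2)$ is at least $4$. Hence $x(k+1)<x(k+2)$, so $s_{k+1}\notin\tau_\m(x)$.
\end{itemize}
As $2\le k\le n-2$, Lemma~\ref{tau-lem1} then gives $\tau_\m(\nu_{k-1}(x))\subset\tau_\m(\nu_k(x))$.

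By Corollary~\ref{nu-cor}, $\nu_{k-1}(x)<\nu_k(x)$. Condition BE2 of Lemma~\ref{bidi-lem} therefore fails, so there is no bidirected edge between them. Equivalently, this is the case of Corollary~\ref{tau-cor} that excludes an edge.

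The main obstacle is bookkeeping rather than any conceptual step. One must get the composite relabeling by $\phi$ right, including its shift direction; the paper's remark that $\phi$ maps $s_i$ to $s_{i+2}$ guides this. One must also track how $\sigma_k$ moves the partner of $3$, and check the boundary cases $k=3$ and $k=n-3$.
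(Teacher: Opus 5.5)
Your proposal is correct and is essentially the paper's proof: the shift description of $\phi$, then $x\in\phi(Z_{k-2})\iff x(3)=k+1$, then the relabeling by $\sigma_k$, then the descent criterion with Lemmas~\ref{tau-lem1} and~\ref{bidi-lem}. The one difference is that you compare the cycles through $1$ directly where the paper invokes Lemma~\ref{reflec-lem}, and you usefully make the order $\nu_{k-1}(x)<\nu_k(x)$ explicit via Corollary~\ref{nu-cor}. There is one harmless slip: $\sigma_k$ itself sends $1\mapsto k+1$ and $2\mapsto 1$ (the map you describe is $\sigma_k s_1$), but because $x$ contains the cycle $(1,2)$ the two conjugations coincide on $Y_1$, so nothing in your argument changes.
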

\begin{proof}
Write $x=\phi(z)$ with $z\in Z_r$. The map $\phi$ adjoins the cycles
$(1,2)$ and $(n-1,n)$ and relabels every entry of $z$ by adding $2$.
Since $z(1)=r+1$, the cycle of $x$ containing $3$ is
\[
(3,r+3).
\]
Set $u=\nu_k(x)$. Conjugation by $\sigma_k$ sends $1$ to $k+1$,
sends $a$ to $a-1$ for $2\leq a\leq k+1$, and fixes larger labels.
Hence
\[
u(1)=k+1,\qquad
u(2)=k\quad\Longleftrightarrow\quad x(3)=k+1.
\]
By Lemma~\ref{reflec-lem},
\[
s_1us_1=s_kus_k
\quad\Longleftrightarrow\quad
us_1u=s_k,
\]
and the latter equality is equivalent to $u(2)=k$. Therefore it
holds exactly when $r=k-2$, which is equivalent to
$x\in\phi(Z_{k-2})$.

In this case $x$ contains the cycle $(3,k+1)$. Thus
$s_k\in\tau_\m(x)$ and $s_{k+1}\notin\tau_\m(x)$. The inclusion now
follows from Lemma~\ref{tau-lem1}. It is strict, so
Lemma~\ref{bidi-lem} excludes a bidirected edge.
\end{proof}

\begin{proposition}\label{hori-stair-prop}
Let $3\leq k\leq n-4$ and let $x=\phi(z)\in\phi(Z_{k-2})$. Assume that
$z(2)\neq k$, or equivalently $x(k+2)\neq4$. Put
\[
y=s_{k+1}xs_{k+1}.
\]
Then $y\in\phi(Z_{k-1})$, and $\nu_k(x)$ and $\nu_k(y)$ are not connected by a bidirected edge.
\end{proposition}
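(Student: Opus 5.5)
The plan is to make the labels explicit, as in the proof of Proposition~\ref{vert-stair-prop}, and then show that condition \textit{BE1} of Lemma~\ref{bidi-lem} fails for the pair $\nu_k(x),\nu_k(y)$. Condition \textit{BE2} will not be needed.

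\emph{Step 1: locating $y$.} Write $x=\phi(z)$ with $z\in Z_{k-2}$. Then $x$ contains the cycles $(1,2)$, $(n-1,n)$ and $(3,k+1)$. Conjugation by $s_{k+1}$ swaps the labels $k+1$ and $k+2$. Since $k\leq n-4$, we have $k+2\leq n-2$, so this conjugation fixes the labels $1,2,n-1,n$ and the cycles $(1,2)$ and $(n-1,n)$ survive. Hence $y=\phi(z')$ for the element $z'=s_{k-1}zs_{k-1}$ of $\cF_{n-4}$. Moreover $y$ contains $(3,k+2)$, so $z'(1)=k$, which means $y\in\phi(Z_{k-1})$. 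In the same way one checks the stated equivalence: $z(2)=k$ holds if and only if $x(4)=k+2$, that is, if and only if $x(k+2)=4$. Also $y\neq x$, because $x(3)=k+1\neq k+2=y(3)$.

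\emph{Step 2: transporting the conjugation.} Put $u=\nu_k(x)$ and $v=\nu_k(y)$. Lemma~\ref{nu-lem}, in the case $i=j+1$ with $j=k$, gives
\[
v=\nu_k(s_{k+1}xs_{k+1})=r\,u\,r,\qquad r=s_ks_{k+1}s_k=(k,k+2).
\]
Since $\nu_k$ is injective and $x\neq y$, we have $v\neq u$. Suppose there were a bidirected edge between $u$ and $v$. By \textit{BE1} we would then have $v=sus$ for some $s\in S$. Lemma~\ref{reflec-lem} forces $s\in\{r,\,uru\}$. The transposition $r$ is not simple, so $uru=(u(k),u(k+2))$ would have to be a simple transposition.

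\emph{Step 3: computing $uru$.} Conjugation by $\sigma_k$ sends $1\mapsto k+1$, sends $a\mapsto a-1$ for $2\leq a\leq k+1$, and fixes larger labels. Applying this to the cycles of $x$:
\begin{itemize}
\item the cycle $(1,2)$ becomes $(k+1,1)$, so $u(1)=k+1$;
\item the cycle $(3,k+1)$ becomes $(2,k)$, so $u(k)=2$;
\item the label $k+2$ is fixed, so $u(k+2)$ is the relabel of $x(k+2)$.
\end{itemize}
Thus $uru=(2,u(k+2))$, and this is simple only if $u(k+2)\in\{1,3\}$. The value $u(k+2)=1$ is impossible, because $u(1)=k+1$. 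The value $u(k+2)=3$ would mean $x(k+2)=4$; here we use $k\geq3$, so that $4\leq k+1$ and the label $4$ is indeed relabelled to $3$. This is excluded by hypothesis. Hence \textit{BE1} fails, and Lemma~\ref{bidi-lem}, applied to whichever of $u,v$ is smaller, shows there is no bidirected edge.

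The only delicate point is the bookkeeping in Step 3. One has to track the relabeling under $\sigma_k$ correctly and see exactly where the two hypotheses $k\geq3$ and $x(k+2)\neq4$ enter. The remaining steps are direct applications of Lemma~\ref{nu-lem}, Lemma~\ref{reflec-lem} and Lemma~\ref{bidi-lem}.
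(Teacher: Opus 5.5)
Your proposal is correct and follows essentially the same route as the paper. You write $y=\phi(s_{k-1}zs_{k-1})$, transport the conjugation through Lemma~\ref{nu-lem} to $r=s_ks_{k+1}s_k$, and then use Lemma~\ref{reflec-lem} to reduce \textit{BE1} to the simplicity of $uru=\sigma_k x s_{k+1} x\sigma_k^{-1}=(2,\sigma_k(x(k+2)))$, which $x(k+2)\neq 4$ excludes; your added checks that $u\neq v$ and that $u(k+2)\neq 1$ make explicit details the paper leaves implicit.
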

\begin{proof}
Since $z\in Z_{k-2}$, we have $z(1)=k-1$. Hence
$s_{k-1}zs_{k-1}\in Z_{k-1}$. The map $\phi$ sends $s_i$ to
$s_{i+2}$, so
\[
y=s_{k+1}xs_{k+1}=\phi(s_{k-1}zs_{k-1})\in\phi(Z_{k-1}).
\]

Set $u=\nu_k(x)$ and $v=\nu_k(y)$. By Lemma~\ref{nu-lem},
\[
v=(s_ks_{k+1}s_k)u(s_ks_{k+1}s_k).
\]
If $v=s_ju s_j$ for some simple reflection $s_j$, then
Lemma~\ref{reflec-lem} implies that either
$s_j=s_ks_{k+1}s_k$ or
$s_j=u(s_ks_{k+1}s_k)u$. The first reflection is not simple. For the
second one, we have
\[
u(s_ks_{k+1}s_k)u=\sigma_kxs_{k+1}x\sigma_k^{-1}.
\]
Since $x\in\phi(Z_{k-2})$, we have $x(k+1)=3$, and therefore
$xs_{k+1}x$ is the transposition $(3,x(k+2))$. Conjugation by
$\sigma_k$ sends $3$ to $2$. Under the present assumptions the resulting
transposition can be simple only if $x(k+2)=4$, which has been excluded.
Thus condition \textit{BE1} in Lemma~\ref{bidi-lem} fails, and the two
vertices are not connected by a bidirected edge.
\end{proof}

\begin{remark}\label{hori-stair-exception}
The condition $x(k+2)\neq4$ cannot simply be omitted. For example, let
$n=10$, $k=4$, and
\[
x=(1,2)(3,5)(4,6)(7,8)(9,10)\in\phi(Z_2).
\]
Then $x(6)=4$, and $\nu_4(x)$ and
$\nu_4(s_5xs_5)$ are connected by a bidirected edge labelled by $s_2$.
\end{remark}

Propositions~\ref{vert-stair-prop} and~\ref{hori-stair-prop} give the
generic segments of the staircase shown in Figure~\ref{staircase-line1}.
The figure is schematic: the horizontal segments excluded in
Remark~\ref{hori-stair-exception} require a separate analysis.

\begin{figure} 
\begin{tikzpicture}[x=1.8cm,y=1.2cm] 

    \draw[lightgray!50] (0,0) rectangle (8, 5);
    \foreach \i in {1,...,7} {
        \draw[dotted, gray] (\i, 0) -- (\i, 5);
    }
    \foreach \j in {1,...,4} {
        \draw[dotted, gray] (0, \j) -- (8, \j);
    }

    \draw[thick, black] (2, 5) -- (2, 4) -- (3, 4) -- (3, 3) -- (4, 3) -- (4, 2) -- (5, 2) -- (5, 1) -- (6, 1) -- (6, 0);

    \foreach \i/\label in {0.5/Y_{n-1}, 1.5/Y_{n-2}, 2.5/Y_{n-3}, 3.5/\cdots, 4.5/\cdots, 5.5/Y_3, 6.5/Y_2, 7.5/Y_1} {
        \node[font=\large] at (\i, 5.3) {$\label$};
    }

    \node[anchor=west, font=\large] at (8.2, 4.5) {$\phi(Z_{n-5})$};
    \node[anchor=west, font=\large] at (8.2, 1.5) {$\phi(Z_2)$};
    \node[anchor=west, font=\large] at (8.2, 0.5) {$\phi(Z_1)$};

    \begin{scope}[every node/.style={font=\footnotesize, inner sep=2pt}]

        \node at (1.4, 4.75) {$\nu_{n-2}(x_{(n-4)!!})$};
        \node at (1.5, 4.25) {$\vdots$};

        \node at (2.6, 4.75) {$\nu_{n-3}(x_{(n-4)!!})$};
        \node at (2.5, 4.25) {$\vdots$};

        \node at (7.5, 4.75) {$x_{(n-4)!!}$};
        \node at (7.5, 4.25) {$\vdots$};

        \node[font=\normalsize] at (2.5, 3.5) {$\ddots$};
        \node[font=\normalsize] at (3.5, 3.5) {$\ddots$};
        \node[font=\normalsize] at (3.5, 2.5) {$\ddots$};
        \node[font=\normalsize] at (4.5, 2.5) {$\ddots$};

        \node at (4.5, 1.7) {$\vdots$};
        \node at (4.4, 1.25) {$\nu_4(x_{(n-6)!!+1})$};

        \node at (5.5, 1.7) {$\vdots$};
        \node at (5.6, 1.25) {$\nu_3(x_{(n-6)!!+1})$};

        \node at (7.5, 1.7) {$\vdots$};
        \node at (7.5, 1.25) {$x_{(n-6)!!+1}$};

        \node at (5.5, 0.7) {$\vdots$};
        \node at (5.5, 0.25) {$\nu_3(x_1)$};

        \node at (6.5, 0.7) {$\vdots$};
        \node at (6.5, 0.25) {$\nu_2(x_1)$};

        \node at (7.5, 0.7) {$\vdots$};
        \node at (7.5, 0.25) {$x_1$};

    \end{scope}
\end{tikzpicture}
\caption{The staircase line for $\phi(\cF_{n-4})$ }
\label{staircase-line1}
\end{figure}

As for the other elements of $Y_1$, define
\[
\lambda_j:\phi(\cF_{n-4})\longrightarrow Y_1,\qquad
\lambda_j(\phi(z))
 =\rho_n\bigl(\nu_j(\rho_{n-2}(z))\bigr)
\quad(1\leq j\leq n-3),
\]
and write $\Psi_j=\lambda_j\phi$. Then
\[
Y_1=\bigsqcup_{j=1}^{n-3}\Psi_j(\cF_{n-4}),
\qquad
\Psi_j(\cF_{n-4})
 =\bigsqcup_{r=1}^{n-5}\Psi_j(Z_r).
\]
Moreover, $\Psi_1=\phi$.

For $1\leq j\leq n-4$, put $N_j=n-j-3$ and define the increasing
relabeling
\[
\gamma_j(a)=
\begin{cases}
a+2,&a\leq N_j,\\
a+3,&a\geq N_j+1.
\end{cases}
\]
A direct calculation from the definitions shows that $\Psi_j(z)$ is
obtained by applying $\gamma_j$ to every entry of $z$ and adjoining
the cycles $(1,2)$ and $(n-j,n)$. For $j=n-3$, $\Psi_{n-3}(z)$ is
obtained by adding $3$ to every entry of $z$ and adjoining
$(1,2)$ and $(3,n)$. These descriptions make the omitted label in
each recursive family explicit.
For $1\leq j\leq n-4$ and $1\leq r\leq n-5$, define
\[
\kappa_j(r)=
\begin{cases}
r+2,&r\leq n-j-4,\\
r+3,&r\geq n-j-3,
\end{cases}
\qquad\text{and set}\qquad
\kappa_{n-3}(r)=n-1.
\]

\begin{proposition}\label{vert-stair-prop2}
Let $1\leq j\leq n-3$, $1\leq r\leq n-5$, and
$x\in\lambda_j\phi(Z_r)$. For $3\leq k\leq n-1$,
\[
s_1\nu_k(x)s_1=s_k\nu_k(x)s_k
\quad\Longleftrightarrow\quad
k=\kappa_j(r).
\]
For $k=\kappa_j(r)$, we also have
\[
\tau_\m(\nu_{k-1}(x))
 \subset\tau_\m(\nu_k(x)),
\]
so the two vertices are not connected by a bidirected edge.
\end{proposition}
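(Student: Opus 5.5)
The plan is to copy the proof of Proposition~\ref{vert-stair-prop}. The only new input is the explicit description of $\Psi_j(z)$ given just before the statement. The argument in that proof never used the specific shape of $\phi$. It used only two facts: the value $x(3)$, and the fact that $x$ contains $(1,2)$. So I would first locate the cycle of $x$ through $3$, and then rerun the $\sigma_k$-relabeling and Lemma~\ref{reflec-lem} verbatim.

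\emph{Step 1: compute $x(3)$.} Write $x=\Psi_j(z)$ with $z\in Z_r$, so $z(1)=r+1$.
\begin{itemize}
\item For $1\leq j\leq n-4$ we have $N_j=n-j-3\geq 1$, so $\gamma_j(1)=3$. Hence the cycle of $x$ through $3$ is $(3,\gamma_j(r+1))$.
\item The case split in $\gamma_j$ is $r+1\leq N_j$, that is, $r\leq n-j-4$. In that case $x(3)=r+3$; otherwise $x(3)=r+4$. In both cases $x(3)=\kappa_j(r)+1$.
\item For $j=n-3$, the adjoined cycle is $(3,n)$, so $x(3)=n=\kappa_{n-3}(r)+1$.
\end{itemize}
Thus in every case $x(3)=\kappa_j(r)+1$.

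\emph{Step 2: the equivalence.} Set $u=\nu_k(x)$. Conjugation by $\sigma_k$ sends $1\mapsto k+1$, sends $a\mapsto a-1$ for $2\leq a\leq k+1$, and fixes larger labels. Hence $u(1)=k+1$, and $u(2)=k$ if and only if $x(3)=k+1$. By Lemma~\ref{reflec-lem}, $s_1us_1=s_kus_k\neq u$ holds if and only if $s_k=us_1u$. Here $us_1u=(u(1),u(2))=(k+1,u(2))$, so this holds exactly when $u(2)=k$. Combining with Step 1, this happens exactly when $k=x(3)-1=\kappa_j(r)$.

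Two details need checking in this step.
\begin{itemize}
\item The inequality $s_1us_1\neq u$: since $k\geq 3$, the cycle of $u$ through $1$ is $(1,k+1)$ and not $(1,2)$.
\item The relabeling description must remain valid for $k=n-1$. This case arises only when $j=n-3$ or $r$ is large, and the description does hold there.
\end{itemize}

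\emph{Step 3: the descent inclusion for $k=\kappa_j(r)$.} Now $x$ contains $(3,k+1)$. I would use the FPF descent criterion: $s_i\in\tau_\m(x)$ if and only if $x(i)>x(i+1)$.
\begin{itemize}
\item $x(k)>x(k+1)=3$. If $k\geq4$, this holds because $x(k)\notin\{1,2,3\}$. If $k=3$, then $x(3)=4>3$.
\item For $k\leq n-2$, we have $x(k+1)=3<x(k+2)$, because $x(k+2)\notin\{1,2,3\}$.
\end{itemize}
So $s_k\in\tau_\m(x)$, and $s_{k+1}\notin\tau_\m(x)$ when $k\leq n-2$.

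For $3\leq k\leq n-2$, Lemma~\ref{tau-lem1} then gives the inclusion $\tau_\m(\nu_{k-1}(x))\subset\tau_\m(\nu_k(x))$. For $k=n-1$, the terminal clause of Lemma~\ref{tau-lem1} needs only $s_{n-1}\in\tau_\m(x)$, which we have.

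The inclusion is strict, so condition \textit{BE2} of Lemma~\ref{bidi-lem} fails. This rules out the edge in both directions, because a bidirected edge requires BE2 for the pair ordered by Bruhat order, and here $\nu_{k-1}(x)<\nu_k(x)$ by Corollary~\ref{nu-cor}.

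\emph{Main obstacle.} The hardest part is bookkeeping, not any new idea. The formula for $\Psi_j$ is only asserted ("a direct calculation"), and the boundary cases need care: $j=n-4$ with $N_j=1$, $j=n-3$, and $k=n-1$. For these I would verify the description of $\Psi_j$ directly from $\rho_n\nu_j\rho_{n-2}$, checking that $\gamma_j(1)=3$ and that the omitted label is $n-j$.
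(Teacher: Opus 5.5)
Your proposal is correct and follows the paper's proof essentially step for step: you locate the cycle $(3,\gamma_j(r+1))$ (respectively $(3,n)$ when $j=n-3$), rerun the $\sigma_k$-relabeling and Lemma~\ref{reflec-lem} argument of Proposition~\ref{vert-stair-prop}, and obtain the strict inclusion from $s_k\in\tau_\m(x)$, $s_{k+1}\notin\tau_\m(x)$ via Lemma~\ref{tau-lem1} (its terminal clause when $k=n-1$). One small point to tighten: Lemma~\ref{reflec-lem} only supplies the forward implication in Step~2, but the converse is a direct check, since when $u(1)=k+1$ and $u(2)=k$ both $s_1us_1$ and $s_kus_k$ replace the arcs $(1,k+1),(2,k)$ by $(1,k),(2,k+1)$ and leave every other arc unchanged.
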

\begin{proof}
First suppose that $j\leq n-4$ and write $x=\Psi_j(z)$ with
$z\in Z_r$. Since $z(1)=r+1$, the relabeling description above shows
that $x$ contains the cycle
\[
(3,\gamma_j(r+1)).
\]
The calculation in Proposition~\ref{vert-stair-prop} therefore gives
the reflection equality exactly when
\[
k=\gamma_j(r+1)-1.
\]
If $r\leq n-j-4$, this value is $r+2$; if
$r\geq n-j-3$, it is $r+3$. This is precisely $\kappa_j(r)$.

For $j=n-3$, every $x=\Psi_{n-3}(z)$ contains $(3,n)$, so the same
calculation gives the unique critical column $k=n-1$.

In a nonterminal family, $x$ contains $(3,k+1)$ at the critical
column. Hence $s_k\in\tau_\m(x)$ and
$s_{k+1}\notin\tau_\m(x)$, and Lemma~\ref{tau-lem1} gives the strict
inclusion. In the terminal family, $x(n)=3$, so
$s_{n-1}\in\tau_\m(x)$; the terminal assertion of
Lemma~\ref{tau-lem1} gives the same conclusion.
\end{proof}

For $1\leq j\leq n-4$ and $1\leq r\leq n-6$, with
$r\neq n-j-4$, define
\[
a_j(r)=
\begin{cases}
r+3,&r\leq n-j-5,\\
r+4,&r\geq n-j-3.
\end{cases}
\]

\begin{proposition}\label{hori-stair-prop2}
Let $1\leq j\leq n-4$, $1\leq r\leq n-6$, and
$r\neq n-j-4$. Suppose that $z\in Z_r$ satisfies
$z(2)\neq r+2$, and put $x=\lambda_j\phi(z)$. Then
\[
y=s_{a_j(r)}xs_{a_j(r)}
 =\lambda_j\phi(s_{r+1}zs_{r+1})
 \in\lambda_j\phi(Z_{r+1}).
\]
The vertices $\nu_{\kappa_j(r)}(x)$ and
$\nu_{\kappa_j(r)}(y)$ are not connected by a bidirected edge.
\end{proposition}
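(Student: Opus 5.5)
The plan is to mimic the proof of Proposition~\ref{hori-stair-prop}, after translating through the explicit relabeling description of $\Psi_j=\lambda_j\phi$. Write $k=\kappa_j(r)$ throughout.

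\textbf{Step 1: identify $y$.} Since $z\in Z_r$, we have $z(1)=r+1$, so $s_{r+1}zs_{r+1}$ has first value $r+2$ and lies in $Z_{r+1}$. This requires $r+1\leq n-5$, which holds because $r\leq n-6$.

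Next I use that $\Psi_j(z)$ is obtained by applying the increasing relabeling $\gamma_j$ to the entries of $z$ and adjoining $(1,2)$ and $(n-j,n)$. Conjugation by $s_{r+1}$ on $z$ swaps $r+1$ and $r+2$. Under $\gamma_j$ this becomes a swap of the adjacent labels $\gamma_j(r+1)$ and $\gamma_j(r+2)$, provided these are consecutive. That is exactly the condition that $r+1\leq N_j$ and $r+2\geq N_j+1$ do not both hold, i.e.\ $r+1\neq N_j=n-j-3$, i.e.\ $r\neq n-j-4$.

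In that case the two labels are $r+3,r+4$ when $r\leq n-j-5$, and $r+4,r+5$ when $r\geq n-j-3$. So the swap is conjugation by $s_{a_j(r)}$, giving $y=s_{a_j(r)}xs_{a_j(r)}=\lambda_j\phi(s_{r+1}zs_{r+1})$.

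\textbf{Step 2: the critical column.} Note that $a_j(r)=\kappa_j(r)+1=k+1$ in both ranges. By Proposition~\ref{vert-stair-prop2}, $x$ contains the cycle $(3,\gamma_j(r+1))=(3,k+1)$.

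Set $u=\nu_k(x)$ and $v=\nu_k(y)$. Since $y=s_{k+1}xs_{k+1}$, Lemma~\ref{nu-lem} gives
\[
v=(s_ks_{k+1}s_k)\,u\,(s_ks_{k+1}s_k).
\]
We have $v\neq u$ because $x\neq y$.

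\textbf{Step 3: rule out BE1.} Suppose $v=s_mus_m$ for some simple $s_m$. By Lemma~\ref{reflec-lem}, either $s_m=s_ks_{k+1}s_k$, which is impossible since that transposition $(k,k+2)$ is not simple, or
\[
s_m=u(s_ks_{k+1}s_k)u=\sigma_k\,x s_{k+1}x\,\sigma_k^{-1}.
\]
Since $x(k+1)=3$, the transposition $xs_{k+1}x$ equals $(3,x(k+2))$. Conjugation by $\sigma_k$ sends $3\mapsto2$, and sends $x(k+2)$ to $x(k+2)-1$ if $x(k+2)\leq k+1$, and to itself otherwise. Note $x(k+2)\neq 1,2,3$.

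So the result is simple only if the image of $x(k+2)$ is $1$ or $3$. Image $1$ is impossible, since it would need $x(k+2)=2$. Image $3$ requires either $x(k+2)=4$ with $4\leq k+1$, or $x(k+2)=3$; the latter is excluded. (When $k+1<4$, the label $3$ lies outside the shifted range, but $k\geq3$ anyway.)

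It remains to show that $x(k+2)=4$ corresponds to $z(2)=r+2$. Label $4$ equals $\gamma_j(2)$, since $2\leq N_j$ except in degenerate small cases. Label $k+2=\gamma_j(r+2)$ by Step 1. Hence $x(k+2)=4$ iff $z(r+2)=2$ iff $z(2)=r+2$, which is excluded by hypothesis. Thus BE1 fails, and Lemma~\ref{bidi-lem} gives no bidirected edge.

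\textbf{Main obstacle.} The delicate part is the bookkeeping in Steps 1 and 3 around the gap in $\gamma_j$. One has to confirm that $\gamma_j(2)=4$, that the $\sigma_k$-relabeling of $x(k+2)$ is as claimed, and that the edge cases are harmless: $r$ near $n-j-3$, and $j$ small so that $N_j$ is near $2$. I would check these by splitting into the two ranges of $a_j(r)$.
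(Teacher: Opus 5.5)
Your proposal is correct and follows the paper's proof essentially step for step. The two labels $\gamma_j(r+1),\gamma_j(r+2)$ are consecutive, which gives $a_j(r)=\kappa_j(r)+1$ and the formula for $y$; then Lemma~\ref{nu-lem} together with Lemma~\ref{reflec-lem} reduces BE1 to whether $\sigma_k(3,x(k+2))\sigma_k^{-1}$ is simple, i.e.\ whether $x(k+2)=4$.

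The one case you left open is $N_j=1$, i.e.\ $j=n-4$ (this is large $j$, not small $j$). There $\gamma_j(2)=5$, so your ``iff'' is not literally true. The direction you need still holds: $4=n-j$ is then the label omitted by $\gamma_j$, while $x(k+2)=\gamma_j(z(r+2))$ lies in the image of $\gamma_j$, so $x(k+2)\neq 4$ and BE1 fails outright. This is exactly the paper's closing remark in its proof.
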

\begin{proof}
Put $N=n-j-3$ and $p=\kappa_j(r)$. Since the missing transition
$r=N-1$ is excluded, the two labels
$\gamma_j(r+1)$ and $\gamma_j(r+2)$ are consecutive. If
$r\leq N-2$, they are $r+3,r+4$; if $r\geq N$, they are
$r+4,r+5$. Thus
\[
a_j(r)=p+1
\]
and the displayed formula for $y$ follows from the relabeling
description of $\Psi_j$. Since conjugation by $s_{r+1}$ changes the
first value of $z$ from $r+1$ to $r+2$, the element on the right lies
in $\lambda_j\phi(Z_{r+1})$.

Set $u=\nu_p(x)$ and $v=\nu_p(y)$. Lemma~\ref{nu-lem} gives
\[
v=(s_ps_{p+1}s_p)u(s_ps_{p+1}s_p).
\]
If $v=s_tus_t$ for a simple reflection $s_t$, then
Lemma~\ref{reflec-lem} says that either
$s_t=s_ps_{p+1}s_p$, which is not simple, or
\[
s_t=u(s_ps_{p+1}s_p)u
   =\sigma_px s_{p+1}x\sigma_p^{-1}.
\]
At the critical column, $x(p+1)=3$, while
$p+2=\gamma_j(r+2)$. Hence the second reflection can be simple only
if $z(r+2)=2$, equivalently $z(2)=r+2$. In the case $N=1$, the image
of $\gamma_j$ contains no label $4$, so this simple reflection cannot
occur at all. The assumed inequality therefore makes condition
\textit{BE1} fail, and Lemma~\ref{bidi-lem} proves the result.
\end{proof}

\begin{definition}\label{row-Beissinger-definition}
For $w\in\cF_n$, order its cycles as
\[
 w=(a_1,b_1)\cdots(a_r,b_r),
 \qquad a_j<b_j,\qquad b_1<\cdots<b_r.
\]
The \emph{row Beissinger insertion tableau} $P_{\rm rB}(w)$ is constructed
as follows \cite[Section~3]{Beissinger}; see also
\cite[Section~2.2]{MZ3}.  Starting with the empty tableau, at step $j$
Schensted-row-insert $a_j$: replace the leftmost entry in the first row that
is larger than $a_j$, bump that entry to the next row, and continue, appending
the current entry when no larger entry exists \cite{Schensted}.  If the new
box is created in row $t$, append $b_j$ to the end of row $t+1$.  Define the
\emph{insertion shape} by
\[
 \operatorname{sh}_{\m}(w)
   :=\operatorname{shape}\bigl(P_{\rm rB}(w)\bigr).
\]
\end{definition}

\begin{theorem}[Marberg, Zhang \cite{MZ3}, Theorems 3.15, 3.18]
\label{shape-molecule-class}
Two vertices $x,y\in\cF_n$ belong to the same molecule of $\Gamma^\m_n$ if
and only if
\[
 \operatorname{sh}_{\m}(x)=\operatorname{sh}_{\m}(y).
\]
\end{theorem}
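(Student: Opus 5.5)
This statement is the Marberg--Zhang classification of molecules in $\Gamma^\m_n$ by the row Beissinger insertion shape. The plan is to establish two directions separately: invariance of $\operatorname{sh}_{\m}$ along bidirected edges, and connectivity of each shape class. By Lemma~\ref{bidi-lem}, every bidirected edge has the form $x\mleftrightarrow{s}y$ with $y=sxs$, $x<y$, and $\tau_\m(x)\nsubseteq\tau_\m(y)$. So the first direction reduces to showing that such a simple conjugation preserves the shape of $P_{\rm rB}$. The second direction requires showing that any two elements with the same shape are joined by a chain of such edges.

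\textbf{Invariance.} By \cite[Theorem~3.1]{Beissinger}, row Beissinger insertion agrees with ordinary Robinson--Schensted insertion on involutions, so $\operatorname{sh}_{\m}(w)=\operatorname{sh}(w)$ as a permutation.
\begin{itemize}
\item I would split according to how $s=s_i$ meets the cycles of $x$. If $i,i+1$ lie in different cycles, then $sxs$ swaps two letters in the cycle notation.
\item The condition BE2 says some descent $s_j$ of $x$ is not a descent of $y$. Using the descent criterion for FPF involutions, this forces the local configuration of $i-1,i,i+1,i+2$ to be a Knuth-type (dual equivalence) pattern, rather than a move that changes Greene invariants.
\item I would then check the finitely many local patterns. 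In each one, $sxs$ is obtained from $x$ by elementary dual Knuth moves on the one-line word, combined with the inverse symmetry. These moves preserve $P$-shapes by Greene's theorem, exactly as in the argument for \eqref{weak-shape-monotonicity}.
\end{itemize}

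\textbf{Connectivity.} I would argue by induction on $n$ using the recursive decomposition $\cF_n=Y_1\sqcup\cdots\sqcup Y_{n-1}$ of Theorem~\ref{recurs-thm}.
\begin{itemize}
\item By Proposition~\ref{rho-bi-prop}, $\rho$ identifies molecules of $\cF_{n-2}$ with molecules inside $Y_1$.
\item Inserting the cycle $(1,2)$ first adds a domino to the first column (rows $1,2$), so shapes in $Y_1$ correspond to shapes of $\cF_{n-2}$ with a vertical domino adjoined.
\item Corollary~\ref{tau-cor} and Theorem~\ref{between-mole-thm} show that each column sequence $\nu_1(x),\dots,\nu_{n-1}(x)$ splits into molecule intervals. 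Along each interval the shape is constant, and at each break the shape strictly decreases in dominance order, by \eqref{nu-shape-chain}.
\item Given $w\in Y_j$ of shape $\lambda$, I would walk down its column $\nu_{j}(x)\to\nu_{j-1}(x)$ along bidirected edges until either reaching $Y_1$ or hitting a staircase break of the kind in Propositions~\ref{vert-stair-prop} and~\ref{vert-stair-prop2}. In the latter case, I would move horizontally within a fixed $Y_k$ via Theorem~\ref{nu-n-thm}-type transport to reach an element in a different column.
\item In $Y_1$, the inductive hypothesis connects everything of a given shape.
\end{itemize}

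The main obstacle is connectivity for shapes with no element in $Y_1$, namely those where rows $1$ and $2$ do not end with a removable domino pattern. Such shapes only appear in $Y_j$ for $j\ge2$. I expect to need a normal form for them, such as the element whose tableau is the column-reading superstandard one, and to show that every element of the shape reaches it through edges inside a single $Y_j$ using Theorem~\ref{nu-n-thm}. Realistically, I would defer to the direct insertion argument of \cite[Theorems 3.15, 3.18]{MZ3} there.
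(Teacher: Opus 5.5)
Your invariance half is essentially right and can be made exact. Take $\tau_\m(w)=\{s_j:w(j)>w(j+1)\}$ and $x<y=s_ixs_i$. The only descents of $x$ that can disappear are at $i-1$ and $i+1$. So BE2 holds exactly when $x(i-1)$ or $x(i+2)$ lies strictly between $x(i)$ and $x(i+1)$. Then $x\mapsto xs_i$ and $y\mapsto ys_i=s_ix$ are Knuth moves, whence $\operatorname{sh}(x)=\operatorname{sh}(xs_i)=\operatorname{sh}(s_ix)=\operatorname{sh}(y)$. Cite Knuth's theorem here; the Greene argument behind \eqref{weak-shape-monotonicity} only yields an inequality.

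The genuine gap is the connectivity half. First, it is circular. Theorem~\ref{between-mole-thm} is proved in the paper by invoking Theorem~\ref{shape-molecule-class} in both directions. Your claim that the shape strictly drops at every break of $\nu_1(x),\dots,\nu_{n-1}(x)$ is itself an instance of ``equal shape implies same molecule'', since \eqref{nu-shape-chain} gives only weak decrease.

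Second, the walk does not work as described. In $\cF_8$ with $x_1=(1,2)(3,4)(5,6)(7,8)$, the vertices $\nu_5(x_1)=(1,6)(2,3)(4,5)(7,8)$ and $\nu_4(x_1)=(1,5)(2,3)(4,6)(7,8)$ both have shape $(3,3,1,1)$. They are not joined by a bidirected edge (Corollary~\ref{tau-cor} with $j=5$). This break is not a staircase either, because $\nu_5(x_1)(2)=3\neq5$. So a walk started at $\nu_5(x_1)$ stalls at once, in neither of your two cases.

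There is also no ``Theorem~\ref{nu-n-thm}-type transport'' inside an intermediate $Y_k$. In $\cF_6$, $(1,2)(3,4)(5,6)\sim(1,2)(3,5)(4,6)$, but $\nu_3$ sends them to $(1,4)(2,3)(5,6)$ and $(1,4)(2,5)(3,6)$, which have different shapes. Finally, you defer the shapes with $\lambda_2=\lambda_3$ to \cite{MZ3}. That is all the paper itself does for the whole statement, so the hard direction remains unproved.

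The missing idea is that your local computation already yields connectivity when read backwards, with no recursion.
\begin{enumerate}
\item Because $x=x^{-1}$, the same witness condition makes $x\mapsto s_ix$ a dual Knuth move. Hence $P(s_ix)=d(P(x))$, where $d$ is the elementary dual equivalence on $\{i-1,i,i+1\}$ or $\{i,i+1,i+2\}$, according to the witness.
\item The witness condition forces $x(i),x(i+1)\notin\{i,i+1\}$. So applying $s_i$ to the values does not change the relevant relative order. Thus $s_ix\mapsto s_ixs_i$ is a Knuth move, and $P(s_ixs_i)=d(P(x))$.
\item The witness condition is symmetric in $x$ and $s_ixs_i$. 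Consequently, every elementary dual equivalence acting nontrivially on $P(x)$ comes from a bidirected edge at $x$.
\item Row Beissinger insertion agrees with RS on involutions, and RS is a bijection from $\cF_n$ onto the standard tableaux with all columns even. Hence the molecule graph is isomorphic to the graph of nontrivial elementary dual equivalences on these tableaux.
\item The components of that graph are exactly the sets $\mathrm{SYT}(\lambda)$. This follows from Haiman's theorem, or from Knuth's theorem applied to recording tableaux.
\end{enumerate}
This gives both directions at once.
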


\begin{proposition}\label{difmole-prop}
Let $x \in Y_1$ and $3\leq k\leq n-1$. If
\[
s_1\nu_k(x)s_1=s_k\nu_k(x)s_k,
\]
then $\nu_k(x)$ and $\nu_i(x)$ belong to different molecules of
$\cF_n$ for every $1\leq i<k$.
\end{proposition}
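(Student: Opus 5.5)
The plan is to reduce the statement to a single adjacent comparison, and then use the shape-monotonicity established in the proof of Theorem~\ref{between-mole-thm}. Put $u_t=\nu_t(x)$. Suppose, for contradiction, that $u_i\sim u_k$ for some $1\le i<k$. Theorem~\ref{between-mole-thm}, applied to the pair $i<k$, gives $u_{k-1}\sim u_k$. So it suffices to prove that $u_{k-1}\not\sim u_k$. By Theorem~\ref{shape-molecule-class}, together with the fact that row Beissinger insertion agrees with Robinson--Schensted on involutions, this amounts to showing that
\[
\operatorname{sh}(u_{k-1})\neq\operatorname{sh}(u_k).
\]
The chain \eqref{nu-shape-chain} already gives $\operatorname{sh}(u_{k-1})\unrhd\operatorname{sh}(u_k)$, so what remains is to prove that this dominance is strict.

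First I will translate the hypothesis. Exactly as in the proof of Proposition~\ref{vert-stair-prop}, Lemma~\ref{reflec-lem} turns $s_1u_ks_1=s_ku_ks_k$ into $u_k(2)=k$. Hence $u_k$ contains the two nested cycles $(1,k+1)$ and $(2,k)$. Since $u_{k-1}=s_ku_ks_k$, the element $u_{k-1}$ is obtained from $u_k$ by swapping the labels $k$ and $k+1$. This turns these two cycles into the crossing pair $(1,k)(2,k+1)$ and leaves every other cycle unchanged up to this relabelling.

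The core step is to exhibit a Greene invariant that changes under this swap. I would use the decreasing version of Greene's theorem: for each $r$, the maximal size $D_r(w)$ of a union of $r$ decreasing subsequences of the one-line word of $w$ equals the sum of the first $r$ column lengths of $\operatorname{sh}(w)$. The dominance already proved says $D_r(u_{k-1})\le D_r(u_k)$ for all $r$. So it is enough to find a single $r$ with $D_r(u_k)>D_r(u_{k-1})$.

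To find it, write both one-line words explicitly. In $u_k$, the positions $1,2,k,k+1$ carry the values $k+1,k,2,1$, which form a decreasing run of length $4$. In $u_{k-1}$, the same positions carry $k,k+1,1,2$. The idea is to take an optimal family of $r$ decreasing subsequences for $u_{k-1}$ and show that it can be converted into a family for $u_k$ that uses one extra entry, namely by merging through the new nested pattern. The accounting for this conversion follows the colouring argument used for \eqref{weak-shape-monotonicity}: it shows that the passage $u_{k-1}\to u_{k-1}s_k\to u_k$ is a composite of two length-increasing steps, and the claim is that the two steps cannot both leave the shape unchanged.

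I expect this strictness to be the main obstacle. Equality of shapes under a single Knuth-type swap is possible in general, so the argument must genuinely use both cycles $(1,k+1)$ and $(2,k)$, not just the swap itself. If the direct Greene count becomes messy, the fallback is to run row Beissinger insertion on $u_k$ and $u_{k-1}$ in parallel. The cycles are inserted in order of their larger entries, and the pairs with larger entries $k$ and $k+1$ appear consecutively in both words, at the same step. The aim is to show that the nested pair forces the box for $2$ to be bumped into row two in $u_k$, whereas the crossing pair does not do this in $u_{k-1}$. Because the later insertions are identical up to relabelling, this would give a difference in the first column length, and hence the desired strict dominance.
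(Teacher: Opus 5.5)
\textbf{Gap: the strictness step is never proved, and the fallback rests on a false claim.}

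Your reduction is sound. Theorem~\ref{between-mole-thm} does reduce the claim to $u_{k-1}\not\sim u_k$, which is cleaner than the paper's ``successive'' local-rule argument for $i<k-1$, and the translation $u_k(2)=k$ is correct. But the only nontrivial point is that $\operatorname{sh}(u_{k-1})\unrhd\operatorname{sh}(u_k)$ is not an equality, and you leave it unproved.

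Your Greene plan never says which $r$ to use, and for small $r$ it cannot work. Your fallback assumes that the later insertions are ``identical up to relabelling'', so that the first-column difference survives. That is false: the later arcs are the same, but they are inserted into different tableaux. Take $n=10$, $k=5$, $u_5=(1,6)(2,5)(3,10)(4,9)(7,8)$ and $u_4=(1,5)(2,6)(3,10)(4,9)(7,8)$. After the arcs with larger ends $5,6$, the partial Beissinger tableaux are $[1],[2],[5],[6]$ and $[1,2],[5,6]$. The final shapes, however, are $(2,2,2,2,1,1)$ and $(3,3,1,1,1,1)$. Both have first column $6$, since $10\,9\,8\,7\,4\,3$ is a decreasing subsequence of both words. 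So $D_1$ agrees, and the difference only appears in the second column. The paper's own local-rule argument asserts this kind of persistence without proof, so it does not supply the step either.

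\textbf{The missing idea.} Use the factorization $u_{k-1}\to v:=u_{k-1}s_k\to s_kv=u_k$ from the proof of \eqref{nu-shape-chain}. Each step swaps two entries that are adjacent in position and consecutive in value. The first step swaps the values $1,2$ at positions $k,k+1$; the second swaps the values $k,k+1$ at positions $1,2$. The hypothesis, using both cycles, is exactly what makes $u_{k-1}$ carry $1,2$ at positions $k,k+1$. Such a swap always changes the shape.

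For the first step, delete the letter $1$ from $u_{k-1}$ and from $v$; both give the same word $\hat\pi$. By Sch\"utzenberger's deletion rule, $P(\hat\pi)$ is obtained from each of $P(u_{k-1})$ and $P(v)$ by removing $1$ and performing one jeu de taquin slide. If the two shapes were equal, the vacated cells would coincide, and reversing the slide would give $P(u_{k-1})=P(v)$. That is false: since $P(\pi)|_{\le 2}=P(\pi|_{\le 2})$, the entries $1,2$ lie in one row of $P(u_{k-1})$ but in one column of $P(v)$. Hence $\operatorname{sh}(u_{k-1})\neq\operatorname{sh}(v)$. Together with $\operatorname{sh}(u_{k-1})\unrhd\operatorname{sh}(v)\unrhd\operatorname{sh}(u_k)$, antisymmetry of dominance gives $\operatorname{sh}(u_{k-1})\neq\operatorname{sh}(u_k)$, which completes your reduction.
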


\begin{proof}
Set $u_i=\nu_i(x)$. Since $u_k\in Y_k$, Theorem~\ref{recurs-thm} gives
$u_k(1)=k+1$. The displayed equation and Lemma~\ref{reflec-lem} imply
\[
u_ks_1u_k=s_k,
\]
and hence $u_k(2)=k$. Thus the two cycles of $u_k$ containing $1$ and
$2$ are
\[
(1,k+1)\qquad\text{and}\qquad(2,k).
\]
On the other hand, $u_{k-1}=s_ku_ks_k$ has the two cycles
\[
(1,k)\qquad\text{and}\qquad(2,k+1).
\]

Apply the local rule for row Beissinger insertion to these two cycle
pairs; see~\cite[Section~2.2]{MZ3}. Replacing
$(1,k),(2,k+1)$ by $(1,k+1),(2,k)$ moves the second box produced by
these pairs to the next row of the insertion diagram. Consequently,
\[
\operatorname{sh}_{\m}(u_k)
 \neq \operatorname{sh}_{\m}(u_i)
 \qquad(1\leq i<k).
\]
For $i<k-1$, this follows by applying the same local rule successively
to $u_i,u_{i+1},\ldots,u_{k-1}$: before the final step the nested pair
$(1,k+1),(2,k)$ has not occurred, so the box moved at the final step
remains in a different row. Therefore $u_k$ and $u_i$ have different
insertion shapes and hence lie in different molecules.

This argument also covers the three exceptional horizontal phenomena
described in the following remark.  Propositions~\ref{vert-stair-prop2}
and~\ref{hori-stair-prop2} describe the corresponding generic direct
edges but are not needed to exclude a path through the exceptional
ones.
\end{proof}

\begin{remark}\label{lambda-stair-exceptions}
Proposition~\ref{hori-stair-prop2} deliberately excludes three
different phenomena. If $z(2)=r+2$, the second reflection in its
proof may be simple and the corresponding horizontal pair may be
bidirected. If $r=n-j-4$, the relabeling $\gamma_j$ separates
$r+1$ and $r+2$ by the omitted label, so the lower-rank generator
becomes a non-simple reflection and the critical column jumps by two.
Finally, for $j=n-3$ all rows have the terminal critical column
$n-1$, so there is no ordinary horizontal staircase.

Consequently, Figures~\ref{lambda staircase line}
and~\ref{lambdan-2} are schematic descriptions of the generic local
edges only. No global separation of molecules is inferred from these
figures; Proposition~\ref{difmole-prop} is proved independently by
insertion shapes.
\end{remark}

The generic segments are shown in Figure~\ref{lambda staircase line}.
For $j=n-3$, Proposition~\ref{vert-stair-prop2} gives the vertical line
depicted in Figure~\ref{lambdan-2}; no general horizontal non-bidirected
claim is made for this family.

%

    \begin{figure}
        \centering
\begin{tikzpicture}[x=1.3cm, y=1.3cm]

    \draw[gray!50, thin] (0,0) rectangle (9,5);

    \foreach \x in {1,2,...,8} {
        \draw[densely dotted, gray!70, semithick] (\x,0) -- (\x,5);
    }
    \foreach \y in {1,2,3,4} {
        \draw[densely dotted, gray!70, semithick] (0,\y) -- (9,\y);
    }

    \draw[black, thick] (2,5) -- (2,4) -- (3,4) -- (3,3) -- (4,3) -- (4,2) -- (6,2) -- (6,1) -- (7,1) -- (7,0);

    \node at (2.5, 3.5) {$\vdots$};
    \node at (3.5, 2.5) {$\cdots$};
    \node at (6.5, 1.5) {$\cdots$};

    \node[above] at (0.5, 5.1) {$Y_{n-1}$};
    \node[above] at (1.5, 5.1) {$Y_{n-2}$};
    \node[above] at (2.5, 5.1) {$Y_{n-3}$};
    \node[above] at (3.5, 5.1) {$Y_{n-4}$};
    \node[above] at (4.5, 5.1) {$\cdots$};
    \node[above] at (5.5, 5.1) {$\cdots$};
    \node[above] at (6.5, 5.1) {$Y_3$};
    \node[above] at (7.5, 5.1) {$Y_2$};
    \node[above] at (8.5, 5.1) {$Y_1$};

    \node[right] at (9.3, 4.5) {$\lambda_j\phi(Z_{n-5})$};
    \node[right] at (9.3, 3.5) {$\vdots$};
    \node[right] at (9.3, 2.5) {$\lambda_j\phi(Z_{n-j})$};
    \node[right] at (9.3, 1.5) {$\vdots$};
    \node[right] at (9.3, 0.5) {$\lambda_j\phi(Z_1)$};

\end{tikzpicture}
        \caption{The staircase line for $1\le j \neq n-3$}
        \label{lambda staircase line}
    \end{figure}

    \begin{figure}
        \centering
\begin{tikzpicture}[x=1.3cm, y=1.3cm]

    \draw[gray!50, thin] (0,0) rectangle (9,5);

    \foreach \y in {1,2,3,4} {
        \draw[densely dotted, gray!70, semithick] (0,\y) -- (9,\y);
    }

    \foreach \x in {2,3,...,8} {
        \draw[densely dotted, gray!70, semithick] (\x,0) -- (\x,5);
    }

    \draw[black, thin] (1,0) -- (1,5);

    \node[above] at (0.5, 5.1) {$Y_{n-1}$};
    \node[above] at (1.5, 5.1) {$Y_{n-2}$};
    \node[above] at (2.5, 5.1) {$Y_{n-3}$};
    \node[above] at (3.5, 5.1) {$Y_{n-4}$};
    \node[above] at (4.5, 5.1) {$\cdots$};
    \node[above] at (5.5, 5.1) {$\cdots$};
    \node[above] at (6.5, 5.1) {$Y_3$};
    \node[above] at (7.5, 5.1) {$Y_2$};
    \node[above] at (8.5, 5.1) {$Y_1$};

    \node[right] at (9.3, 4.5) {$\lambda_{n-3}\phi(Z_{n-5})$};
    \node[right] at (9.3, 3.5) {$\vdots$};
    \node[right] at (9.3, 2.5) {$\lambda_{n-3}\phi(Z_{n-j})$};
    \node[right] at (9.3, 1.5) {$\vdots$};
    \node[right] at (9.3, 0.5) {$\lambda_{n-3}\phi(Z_1)$};

\end{tikzpicture}
        \caption{The staircase line for $j=n-3$}
        \label{lambdan-2}
    \end{figure}


\section{Molecules}\label{mole-sect}

We can show the molecules in Figure~\ref{figure1}, such that each grid is a element of $\cF_n$.  After the elements in a molecule of $\cF_{n-2}$ are mapped to $Y_1$ by $\rho$, they are still contained in the same molecule. These elements are then connected to $Y_2, Y_3, \cdots, Y_{n-1}$ as much as possible, expanding the molecule. As mentioned in the previous section, the staircase line provides boundaries of molecules.
	\begin{figure}
		\begin{center}
			\includegraphics[scale=0.45]{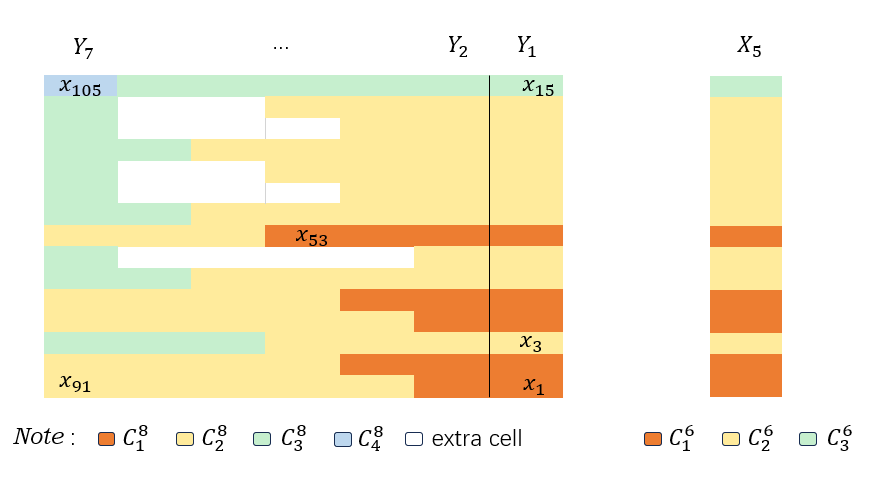}
		\end{center}
		\caption{The example for $ n=8 $ and $n=6$ }
        \label{figure1}
	\end{figure}

\begin{corollary}\label{merge-cor}
Assume $x,y \in \cF_{n-2}$ are in different molecules, then $\rho(x)$ and $\rho(y)$ are in different molecules in $\cF_n$.
\end{corollary}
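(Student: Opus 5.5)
The plan is to go through the shape classification, Theorem~\ref{shape-molecule-class}, rather than through paths of bidirected edges. Proposition~\ref{rho-bi-prop} only shows that $\rho$ preserves edges \emph{between images}. A path in $\cF_n$ from $\rho(x)$ to $\rho(y)$ could leave $Y_1$, so that proposition alone is not enough. The insertion shape, however, is an invariant of molecules. It therefore suffices to show
\[
\operatorname{sh}_{\m}(\rho(z))=\operatorname{sh}_{\m}(z)\ \text{with one extra box in the first row, for all } z\in\cF_{n-2}.
\]
Given this, if $\rho(x)\sim\rho(y)$ in $\cF_n$, then $\operatorname{sh}_{\m}(\rho(x))=\operatorname{sh}_{\m}(\rho(y))$. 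Removing the added box gives $\operatorname{sh}_{\m}(x)=\operatorname{sh}_{\m}(y)$, so $x\sim y$ in $\cF_{n-2}$ by Theorem~\ref{shape-molecule-class}. That is the contrapositive of the corollary.

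To compute $\rho(z)$, note that $z s_{n-1}$, for $z\in\cF_{n-2}$ viewed inside $S_n$, is $z$ with the extra cycle $(n-1,n)$. Conjugating by $w_0$ replaces each label $a$ by $n+1-a$. Hence $\rho(z)$ consists of the cycle $(1,2)$ together with the cycles $(n+1-b,\,n+1-a)$ for the cycles $(a,b)$ of $z$.

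The step I expect to be the main obstacle is relating the row Beissinger insertion of $\rho(z)$ to that of $z$, because of the reversal of labels. I would avoid a direct tableau comparison and use the fact, invoked in the proof of Theorem~\ref{between-mole-thm}, that row Beissinger insertion agrees with Robinson--Schensted on involutions. Then $\operatorname{sh}_{\m}$ is the RS shape, and Greene's theorem applies.

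Conjugation by $w_0$ reverses the one-line word and complements its values. This preserves the lengths of increasing subsequences, so $\operatorname{sh}(w_0zw_0)=\operatorname{sh}(z)$; the same holds for $z s_{n-1}$. Now compare the words of $zs_{n-1}$ and $z$. The word of $zs_{n-1}$ is the word of $z$ followed by $n,\,n-1$, which are two values larger than everything before them. Unions of $r$ increasing subsequences can absorb exactly one of these two values, so $G_r$ increases by $1$ for each $r\ge1$. Greene's theorem then says the shape gains exactly one box in row $1$. Applying the same invariance to $\rho(z)=w_0(zs_{n-1})w_0$ gives the displayed claim.

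If one prefers to avoid Greene's theorem, there is an alternative. Conjugating by $w_0$ gives an involution whose word is the word of $z$ shifted up by $2$, prefixed by $2,1$. One then runs RS directly: inserting $2,1$ produces a column of length $2$, and the rest of the insertion behaves like that of $z$ shifted up by $2$. I expect this route to need more care than the Greene's theorem argument.
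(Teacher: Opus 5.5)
\paragraph{The shape computation is wrong.} Your overall strategy is sound, but the step that fails is the Greene computation. The word of $zs_{n-1}$ is the word of $z$ followed by $n,\,n-1$, that is, $zs_{n-1}=z\oplus 21$. You are right that a single increasing subsequence can contain only one of $n,n-1$. But for $r\geq 2$ you may append $n$ to one member of an optimal family for $z$ and $n-1$ to another. Hence $G_r(zs_{n-1})=G_r(z)+\min(r,2)$, not $G_r(z)+1$. With $\operatorname{sh}(z)=\lambda$ this gives $\operatorname{sh}(\rho(z))=(\lambda_1+1,\lambda_2+1,\lambda_3,\ldots)$, which is exactly Lemma~\ref{shape-transport-lem}.

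Your displayed claim fails two sanity checks. It produces a shape with $n-1$ boxes for an element of $\cF_n$. It also creates a column of odd length, which is impossible for a fixed-point-free involution. Your own alternative route, in which inserting $2,1$ first produces a column of length two, already gives the two-box answer. The repair is immediate: the map $\lambda\mapsto(\lambda_1+1,\lambda_2+1,\lambda_3,\ldots)$ is still injective. So $\operatorname{sh}_{\m}(\rho(x))=\operatorname{sh}_{\m}(\rho(y))$ still forces $\operatorname{sh}_{\m}(x)=\operatorname{sh}_{\m}(y)$, and Theorem~\ref{shape-molecule-class} applied in $\cF_{n-2}$ finishes the argument.

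\paragraph{Comparison with the paper.} Once corrected, your argument is a genuinely different route from the paper's, and a better-founded one. The paper simply invokes the ``in particular'' clause of Proposition~\ref{rho-bi-prop}. The justification given there, transporting a path edge by edge, only yields $x\sim y\Rightarrow\rho(x)\sim\rho(y)$. For the converse, a path of bidirected edges from $\rho(x)$ to $\rho(y)$ need not stay inside $Y_1$. Indeed, by Theorem~\ref{nu-2-thm} every $z\in Y_1$ is joined to $\nu_2(z)\in Y_2$ by a bidirected edge, so such paths leave $Y_1$ routinely. This is precisely the objection you raise, and the shape invariant avoids path-lifting altogether. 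The price is reliance on the Marberg--Zhang classification and on Beissinger's theorem, both of which the paper already uses elsewhere.
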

\begin{proof}
Suppose, to the contrary, that $\rho(x)\sim\rho(y)$. The final
assertion of Proposition~\ref{rho-bi-prop} then implies $x\sim y$,
contradicting the hypothesis. Hence $\rho(x)$ and $\rho(y)$ belong to
different molecules in $\cF_n$.
\end{proof}

Corollary \ref{merge-cor} also gives a useful lower bound for the number of
molecules.

\begin{proposition}
Let $m_n$ be the number of molecules of $\Gamma^\m_n$.  Then
\[
                  m_n\geq m_{n-2}+1.
\]
Consequently, if $n=2p$, then $m_n\geq p$.
\end{proposition}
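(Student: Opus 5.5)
The plan is to use Corollary~\ref{merge-cor} to embed the molecules of $\cF_{n-2}$ injectively into those of $\cF_n$, and then to exhibit one molecule of $\cF_n$ that contains no element of $Y_1=\rho(\cF_{n-2})$.

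\emph{Step 1.} Let $C_1,\dots,C_{m_{n-2}}$ be the molecules of $\Gamma^\m_{n-2}$, and choose representatives $x_i\in C_i$. By Corollary~\ref{merge-cor}, the elements $\rho(x_1),\dots,\rho(x_{m_{n-2}})$ lie in pairwise distinct molecules of $\Gamma^\m_n$. Therefore at least $m_{n-2}$ molecules of $\Gamma^\m_n$ meet $Y_1$.

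\emph{Step 2.} It remains to find a molecule of $\cF_n$ disjoint from $Y_1$. I would use Theorem~\ref{shape-molecule-class} and compare insertion shapes. Take $w_{\max}=(1,n)(2,n-1)\cdots(p,p+1)$, the totally nested involution, which is the $\theta_n$-image of the longest element of $\cF_n$. In row Beissinger insertion the cycles are ordered by $b_j$, so the $a_j$ are inserted in the order $p,p-1,\dots,1$. Each insertion bumps down a column, which produces a single column of length $p$, and each $b_j$ is appended to the next row. The resulting shape should be $(2^p)$, with two columns of length $p$.

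By contrast, every $w\in Y_1$ contains the cycle $(1,2)$, which has $b_1=2$ and is therefore inserted first. This gives first row $1$ and second row $2$. After that, $1$ is never bumped, because every later $a_j$ is at least $3$. So the first row grows to length at least $2$ unless no later $a_j$ ever stays in the first row. I need to pin down a shape invariant that separates $Y_1$ from $w_{\max}$.

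A simpler route is to read the shape through the RSK connection and Greene's theorem, as in the proof of Theorem~\ref{between-mole-thm}, where $\operatorname{sh}_\m$ is identified with the ordinary RS shape of the involution. For $w_{\max}=w_0$, the shape is the single column $(1^n)$, since the one-line word is decreasing. For $w\in Y_1$, the one-line word starts $2,1,\dots$. Then $1$ followed by any larger later entry gives an increasing subsequence of length $2$, and such an entry exists because $n\ge 4$. Hence the first row has length at least $2$, so $\operatorname{sh}(w)\neq(1^n)$. The molecule of $w_0$ therefore avoids $Y_1$, and the total count is at least $m_{n-2}+1$.

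\emph{Step 3.} For $n=2$ there is one molecule, so $m_2=1$. Induction then gives $m_{2p}\ge p$.

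The main obstacle is making sure which shape convention Theorem~\ref{shape-molecule-class} uses, so that the separating invariant is valid. I would rely on the paper's identification of $\operatorname{sh}_\m$ with the RS shape, via \cite[Theorem~3.1]{Beissinger} as invoked in Theorem~\ref{between-mole-thm}, and the Greene-type argument above.
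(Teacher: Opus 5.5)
Your proof is correct, but Step 2 takes a different route from the paper's. The paper does not use the shape classification here. Since $w_0$ is Bruhat-maximal, $\tau_\m(w_0)=S$, so $\tau_\m(x)\subseteq\tau_\m(w_0)$ and hence $\omega_\m(x\to w_0)=0$ for every $x$. Thus no bidirected edge touches $w_0$, so $\{w_0\}$ is a singleton molecule, and it misses $Y_1$ because $w_0(1)=n\neq 2$. That argument needs only the definition of $\omega_\m$.

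Your route instead relies on Theorem~\ref{shape-molecule-class}, Beissinger's identification of $\operatorname{sh}_\m$ with the RS shape, and Schensted's theorem. It works, but once you grant that classification, Step 1 and the recursion become unnecessary. The theorem then says $m_n$ is exactly the number of partitions of $2p$ with all columns of even length, i.e.\ the number of partitions of $p$, which is at least $p$ (count the $p$ hooks).

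You should delete your first computation, because it is wrong. Row Beissinger insertion of $w_0$ processes $(p,p+1),(p-1,p+2),\ldots,(1,n)$, and each step extends a single column, so $\operatorname{sh}_\m(w_0)=(1^n)$, not $(2^p)$. Leaving both values in the write-up is contradictory.

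Your worry about conventions also disappears if you stay with row Beissinger insertion. For $w\in Y_1$ the pair $(1,2)$ is processed first, leaving first row $1$. The next left endpoint $a_2\geq 3$ is then appended to that row, so the first part of $\operatorname{sh}_\m(w)$ is at least $2$.
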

\begin{proof}
By Proposition \ref{rho-bi-prop}, the image under $\rho$ of every molecule
of $\Gamma^\m_{n-2}$ is contained in a molecule of
$\Gamma^\m_n$.  Corollary \ref{merge-cor} says that the images of
two distinct molecules cannot lie in the same molecule.  On the other
hand, $\tau_\m(w_0)=S$, so no bidirected edge is incident with $w_0$.
Thus $\{w_0\}$ is a molecule, and it is not one of the molecules meeting
$Y_1=\rho(\cF_{n-2})$.  This proves the recurrence.  Since $m_2=1$,
induction gives $m_{2p}\geq p$.
\end{proof}

For $1\leq k\leq p$, write
\[
             C_k^n=\mathscr C\bigl(x_{(2k-1)!!}\bigr).
\]
In particular, $C_p^n=\{w_0\}$.
To clarify the recursive structure of $C_1^n$, we have the following theorem.
\begin{theorem}\label{C1-thm1}
\[
                  C_1^n\cap Y_1=\rho(C_1^{n-2}).
\]
\end{theorem}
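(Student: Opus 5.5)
Both inclusions come from Theorem~\ref{shape-molecule-class} together with the fact that $\rho$ acts on row Beissinger insertion in a controlled way. Recall $C_1^n=\mathscr C(x_1)$ with $x_1=s_1s_3\cdots s_{n-1}$, and that under the recursive labelling $x_1\in Y_1$ is $\rho(x_1)$ with $x_1\in\cF_{n-2}$ the minimal element there. Hence the distinguished molecule $C_1^{n-2}$ contains $x_1$ and $C_1^n$ contains $\rho(x_1)$.

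The inclusion $\rho(C_1^{n-2})\subseteq C_1^n\cap Y_1$ is immediate. If $y\in C_1^{n-2}$, then $y\sim x_1$ in $\cF_{n-2}$. Proposition~\ref{rho-bi-prop} then gives $\rho(y)\sim\rho(x_1)=x_1$ in $\cF_n$, and $\rho(y)\in Y_1$ by definition.

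For the reverse inclusion, take $w\in C_1^n\cap Y_1$ and write $w=\rho(y)$ with $y\in\cF_{n-2}$. The goal is $y\sim x_1$ in $\cF_{n-2}$; Corollary~\ref{merge-cor} alone cannot give this, since $\rho(y)\sim\rho(x_1)$ does not a priori come from a path inside $Y_1$. Instead we compare insertion shapes. We compute $\operatorname{sh}_\m(\rho(y))$ in terms of $\operatorname{sh}_\m(y)$. Since $\rho(y)=w_0\,y s_{n-1}\,w_0$, the element $\rho(y)$ has cycles $(1,2)$ together with the cycles of $y$ relabelled by $a\mapsto n+1-a$. Row Beissinger insertion is not compatible with this reversal directly. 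However, row Beissinger insertion agrees with RS insertion on involutions \cite[Theorem~3.1]{Beissinger}, and conjugation by $w_0$ transposes the RS shape. So we reduce to ordinary RS: $\operatorname{sh}(\rho(y))=\operatorname{sh}(y s_{n-1})^{T}$ and $\operatorname{sh}(y)=\operatorname{sh}(w_0yw_0)^T$. Appending the 2-cycle $(n-1,n)$ to $y$ on the values $n-1,n$, the largest letters, inserts $n$ then $n-1$ at the end of the one-line word. This adds a box in row 1 and a box in row 2, giving $\operatorname{sh}(ys_{n-1})=\operatorname{sh}(y)+e_1+e_2$. Transposing, $\operatorname{sh}(\rho(y))$ is obtained from $\operatorname{sh}(w_0yw_0)$ by adding one box to each of the first two columns.

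This map $\lambda\mapsto\lambda\cup\{\text{one box in columns }1,2\}$ is injective on partitions. Therefore $\operatorname{sh}_\m(\rho(y))=\operatorname{sh}_\m(\rho(x_1))$ forces $\operatorname{sh}_\m(w_0yw_0)=\operatorname{sh}_\m(w_0x_1w_0)$, hence $\operatorname{sh}_\m(y)=\operatorname{sh}_\m(x_1)$ by transposing. Theorem~\ref{shape-molecule-class} applied in $\cF_{n-2}$ then gives $y\in C_1^{n-2}$, completing the proof.

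The main obstacle is justifying the shape bookkeeping: identifying $\operatorname{sh}_\m$ with the RS shape of the involution, verifying that $w_0$-conjugation transposes it, and checking that appending the two largest letters in decreasing order to the word adds boxes in rows $1$ and $2$. If the convention identifying Beissinger shape with RS shape turns out to involve a transpose, the argument is unchanged, since injectivity holds for adding one box to each of the first two rows as well.
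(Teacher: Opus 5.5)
Your route differs from the paper's for the reverse inclusion. The paper proves $\rho(C_1^{n-2})\subseteq C_1^n\cap Y_1$ exactly as you do. It then gets the reverse inclusion in one line from Corollary~\ref{merge-cor}, i.e.\ from the converse half of the ``in particular'' of Proposition~\ref{rho-bi-prop}. Your caution about that step is justified. The edge-by-edge argument given there only transports paths that stay inside $Y_1$, while a path from $\rho(y)$ to $x_1$ in $\Gamma^\m_n$ may pass through $Y_2,\dots,Y_{n-1}$. Comparing insertion shapes is what actually closes this: it amounts to Lemma~\ref{shape-transport-lem} plus Theorem~\ref{shape-molecule-class}, with $\operatorname{sh}_\m$ identified with the Robinson--Schensted shape as the paper also does in Theorem~\ref{between-mole-thm}. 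The cost is that the result then rests on the Marberg--Zhang classification rather than on local edge combinatorics, but it gives a complete argument.

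One step, however, is false as written: conjugation by $w_0$ does not transpose the RS shape, it preserves it. The one-line word of $w_0uw_0$ is the reverse-complement of that of $u$. Reversing positions and values simultaneously sends increasing subsequences to increasing subsequences, so Theorem~\ref{Greene-theorem} gives $\operatorname{sh}(w_0uw_0)=\operatorname{sh}(u)$; equivalently, the $P$-tableau is replaced by its evacuation. Already $u=21\in S_2$ refutes your claim: $w_0uw_0=u$, but $(1,1)\neq(2)$.

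Accordingly, your formula for $\operatorname{sh}(\rho(y))$ is wrong. For $n=4$ and $y=(1,2)$, $\rho(y)=2143$ has shape $(2,2)$, whereas adding a box to each of the first two columns of $(1,1)$ gives $(2,1,1)$. Your final injectivity deduction survives only because the two spurious transposes are applied to $y$ and $x_1$ alike, and the correct map happens to be injective too.

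Replace the middle of the argument by $\operatorname{sh}(\rho(y))=\operatorname{sh}(ys_{n-1})=\operatorname{sh}(y)+e_1+e_2$. This uses your correct observation that inserting $n$ and then $n-1$ adds one box to each of the first two rows. The map is injective in $\operatorname{sh}(y)$, and Theorem~\ref{shape-molecule-class} in $\cF_{n-2}$ then finishes. The closing hedge about conventions is no longer needed.
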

\begin{proof}
The labeling of the distinguished vertices gives
$x_1^n=\rho(x_1^{n-2})$.  If $u\in C_1^{n-2}$, Proposition
\ref{rho-bi-prop} shows that $\rho(u)$ and
$\rho(x_1^{n-2})=x_1^n$ belong to the same molecule.  Hence
$\rho(C_1^{n-2})\subseteq C_1^n\cap Y_1$.

Conversely, let $z\in C_1^n\cap Y_1$.  There is a unique
$u\in \cF_{n-2}$ such that $z=\rho(u)$.  If
$u\notin C_1^{n-2}$, then $u$ and $x_1^{n-2}$ lie in different
molecules of $\cF_{n-2}$.  Corollary \ref{merge-cor} would then imply
that $\rho(u)=z$ and $\rho(x_1^{n-2})=x_1^n$ lie in different
molecules of $\cF_n$, contrary to $z\in C_1^n$.  Thus
$u\in C_1^{n-2}$ and $z\in\rho(C_1^{n-2})$.
\end{proof}

\begin{definition}
For $n=2p$, an element $w\in \cF_n$ is called a \emph{middle element} if
\[
                         \tau_\m(w)=\{s_p\}.
\]
\end{definition}

\begin{definition}\label{sum-permutation-definition}
For $\alpha\in S_a$ and $\beta\in S_b$, their \emph{direct sum} and
\emph{skew sum} are the permutations in $S_{a+b}$ defined by
\[
 (\alpha\oplus\beta)(i)=
 \begin{cases}
  \alpha(i),&1\leq i\leq a,\\
  a+\beta(i-a),&a<i\leq a+b,
 \end{cases}
\]
and
\[
 (\alpha\ominus\beta)(i)=
 \begin{cases}
  b+\alpha(i),&1\leq i\leq a,\\
  \beta(i-a),&a<i\leq a+b.
 \end{cases}
\]
Thus the first block of $\alpha\oplus\beta$ lies southwest of the second
block in the permutation diagram, while the first block of
$\alpha\ominus\beta$ lies northwest of the second.
\end{definition}

\begin{theorem}[Greene \cite{Greene}]\label{Greene-theorem}
If $\lambda=\operatorname{sh}(w)$, then for every $k\geq1$,
\[
 \lambda_1+\cdots+\lambda_k
 =
 \max\bigl\{
 |I_1\cup\cdots\cup I_k|:
 I_1,\ldots,I_k\text{ are increasing subsequences of }w
 \bigr\}.
 \tag{6.1}\label{Greene-invariant}
\]
\end{theorem}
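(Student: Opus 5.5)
We plan to follow the classical route through Knuth equivalence. For a word $w$ and $k\geq1$, let $I_k(w)$ be the maximum of $|I_1\cup\cdots\cup I_k|$ over $k$ increasing subsequences of $w$. The proof has three steps.

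\textbf{Step 1 (Knuth invariance).} We show that $I_k$ is invariant under the elementary Knuth relations
\[
 yxz\equiv yzx\quad(x<y<z)
 \qquad\text{and}\qquad
 xzy\equiv zxy\quad(x<y<z),
\]
applied to adjacent letters inside a longer word.

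\textbf{Step 2 (reduction to a tableau).} Let $P=P(w)$ be the Robinson--Schensted insertion tableau of $w$, and let $\operatorname{rw}(P)$ be its row reading word, read row by row from the bottom row to the top row. It is standard that $w\equiv\operatorname{rw}(P)$. This follows by checking that a single Schensted row insertion is realized by a sequence of Knuth moves on the reading word. By Step 1 we then get $I_k(w)=I_k(\operatorname{rw}(P))$.

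\textbf{Step 3 (computation on the tableau).} We show directly that $I_k(\operatorname{rw}(P))=\lambda_1+\cdots+\lambda_k$.
\begin{itemize}
\item For the lower bound, take the first $k$ rows of $P$. Each row is an increasing contiguous subword of $\operatorname{rw}(P)$.
\item For the upper bound, observe that each column of $P$ appears in $\operatorname{rw}(P)$ as a decreasing subsequence, since the bottom entries are read first. Hence an increasing subsequence meets each column at most once. A union of $k$ increasing subsequences therefore meets a column of length $c$ in at most $\min(k,c)$ entries.
\item Summing over the columns gives $\sum_{\text{columns}}\min(k,c)=\lambda_1+\cdots+\lambda_k$.
\end{itemize}
This proves \eqref{Greene-invariant}.

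\textbf{The main obstacle is Step 1.} By symmetry under reversal and complementation, it suffices to treat the first relation, and to show that $I_k$ does not decrease in either direction.

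Start from a family of $k$ increasing subsequences of one word and transport it to the other word.
\begin{itemize}
\item Swapping $x,z$ in $yxz\to yzx$ breaks only a subsequence that contains both $x$ and $z$ consecutively.
\item In the reverse direction $yzx\to yxz$, nothing can break, since $z>x$ cannot both lie in one increasing subsequence.
\end{itemize}
So the real work is the move $yxz\to yzx$ when some $I_a$ contains $\ldots x z\ldots$. We plan a rerouting exchange.
\begin{itemize}
\item If $y$ lies in no $I_b$, replace $x$ by $y$ in $I_a$. Using $x<y<z$, the sequence $\ldots y z\ldots$ stays increasing. The union loses $x$ but gains $y$, so its size does not drop.
\item If $y\in I_b$ with $b\neq a$, write $I_a=A\,x\,z\,A'$ and $I_b=B\,y\,B'$. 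Replace them by $A\,x\,B'$-type and $B\,y\,z\,A'$-type sequences, that is, swap the tails after the crossing point. Each new sequence must be checked to be increasing using $x<y<z$, choosing which tail goes where according to the comparison of the first letters of $A'$ and $B'$.
\end{itemize}
Verifying the increasingness of the recombined sequences in this second case is the most delicate verification. It is exactly Greene's exchange lemma, and it is the step I expect to require the most care.
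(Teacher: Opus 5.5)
Your proof is correct. Note that the paper gives no proof of this statement: it quotes it from Greene and only applies it, in Lemma~\ref{shape-transport-lem} and in the proof of Theorem~\ref{between-mole-thm}. Your argument is the standard Knuth-equivalence proof. Steps~2 and~3 are right as stated; in particular the columns of $P$ do appear decreasingly in $\operatorname{rw}(P)$.

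Two small clarifications to Step~1.

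First, the reduction to one relation should use the \emph{composite} of reversal and complementation. Each operation alone turns increasing subsequences into decreasing ones. The composite preserves increasing subsequences and interchanges $yxz\equiv yzx$ with $xzy\equiv zxy$.

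Second, the step you flag as delicate is actually immediate. First assume that $I_1,\ldots,I_k$ are pairwise disjoint; you may, by deleting repeated letters, which leaves the union unchanged. Then exactly one $I_a$ contains $x,z$, and $y\notin I_a$. Write $I_a=A\,x\,z\,A'$ and $I_b=B\,y\,B'$. All letters of $A$ and $B$ lie before $y$, and all letters of $A'$ and $B'$ lie after $z$. Moreover $A<x$, $B<y<B'$ and $z<A'$. Hence in $\cdots yzx\cdots$ the sequences $A\,x\,B'$ and $B\,y\,z\,A'$ are always increasing and have the same union. No comparison of the first letters of $A'$ and $B'$ is needed.
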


\begin{lemma}\label{shape-transport-lem}
Let $x\in\cF_{n-2}$ and
$\operatorname{sh}(x)=(\lambda_1,\ldots,\lambda_\ell)$.  Then
\[
 \operatorname{sh}(\rho(x))
   =(\lambda_1+1,\lambda_2+1,\lambda_3,\ldots,\lambda_\ell)
\]
and
\[
 \operatorname{sh}(\nu_{n-1}\rho(x))
   =(\lambda_1,\ldots,\lambda_\ell,1,1).
\]
\end{lemma}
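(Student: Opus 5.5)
The plan is to compute both shapes with Greene's theorem (Theorem~\ref{Greene-theorem}), after writing $\rho(x)$ and $\nu_{n-1}\rho(x)$ explicitly as one-line words built from direct and skew sums (Definition~\ref{sum-permutation-definition}).

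\emph{Step 1: the word of $\rho(x)$.} Regard $x$ as an element of $S_{n-2}$ acting on $\{1,\ldots,n-2\}$. Then $x s_{n-1}=x\oplus(21)$, with the extra cycle $(n-1,n)$. Conjugation by $w_0$ relabels $i\mapsto n+1-i$ in both positions and values, so it replaces a one-line word by its reverse-complement. Hence
\[
\rho(x)=(21)\oplus\tilde x,\qquad \tilde x=w_0'xw_0',
\]
where $w_0'$ is the longest element of $S_{n-2}$. The reverse-complement of a word has the same Robinson--Schensted shape (Schützenberger). Equivalently, applying Theorem~\ref{Greene-theorem}, a union of increasing subsequences of a word reverses and complements to a union of increasing subsequences of its reverse-complement. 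Therefore $\operatorname{sh}(\tilde x)=\lambda$.

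\emph{Step 2: Greene counts for $\rho(x)$.} In $(21)\oplus\tilde x$ the block $21$ lies southwest of $\tilde x$. Any increasing subsequence therefore contains at most one of the values $1,2$, and that value can be prepended to any increasing subsequence of $\tilde x$. Writing $G_k$ for the Greene maximum in~\eqref{Greene-invariant}, we get
\[
G_k(\rho(x))=G_k(\tilde x)+\min(k,2).
\]
For the upper bound, each of the $k$ chains picks up at most one extra element, and there are only two extra elements in total. For the lower bound, prepend $2$ to one optimal chain and $1$ to another, using an empty chain if $\ell=1$. Taking successive differences gives the parts $\lambda_1+1,\lambda_2+1,\lambda_3,\ldots$; when $\ell=1$ the second part is $0+1$, which is still consistent with the formula.

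\emph{Step 3: the word of $\nu_{n-1}\rho(x)$.} Conjugation by $\sigma_{n-1}$ sends $1\mapsto n$ and $a\mapsto a-1$ for $2\le a\le n$. So the cycle $(1,2)$ becomes $(1,n)$, and the block $\tilde x$ on $\{3,\ldots,n\}$ moves to $\{2,\ldots,n-1\}$. The resulting word is
\[
\nu_{n-1}\rho(x)=(1)\ominus\tilde x\ominus(1),
\]
that is, value $n$ in position $1$, then $\tilde x$ shifted up by $1$, then value $1$ in position $n$. The initial $n$ cannot be followed by a larger value, and the final $1$ cannot be preceded by a smaller one. Hence every increasing subsequence is either a singleton among these two entries or lies entirely in $\tilde x$. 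This gives
\[
G_k=\max_{0\le j\le \min(k,2)}\bigl(G_{k-j}(\tilde x)+j\bigr).
\]
Since $G_k(\tilde x)-G_{k-1}(\tilde x)=\lambda_k\ge1$ for $k\le\ell$, spending a chain on a singleton never beats using it inside $\tilde x$ while $k\le\ell$. So $G_k=G_k(\tilde x)$ for $k\le \ell$, while $G_{\ell+1}=|\lambda|+1$ and $G_{\ell+2}=|\lambda|+2$. The shape is therefore $(\lambda_1,\ldots,\lambda_\ell,1,1)$.

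The main obstacle is the bookkeeping in Steps 1 and 3: confirming exactly how $w_0$-conjugation and $\sigma_{n-1}$-conjugation act on one-line words. The Greene computations are routine once the direct-sum and skew-sum decompositions are established. The only delicate points are the edge case $\ell=1$ and the concavity argument $\lambda_k\ge1$ in Step 3.
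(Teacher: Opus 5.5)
Your proposal is correct and follows the same route as the paper: write $\rho(x)$ as a direct sum and $\nu_{n-1}\rho(x)$ as a skew sum, then read off the shapes from Greene's theorem. The paper simply quotes the two Greene facts (additivity of $G_k$ under $\oplus$, union of row lengths under $\ominus$), whereas you verify them directly; your Step~1 is also more careful than the paper, which writes $\rho(x)=21\oplus x$ although the second block is really the $w_0'$-conjugate $\tilde x$, and your reverse-complement invariance of the shape is exactly what justifies treating the two as interchangeable.
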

\begin{proof}
The definitions of $\rho$ and $\nu_{n-1}$ give
\[
 \rho(x)=21\mathbin{\oplus}x,
 \qquad
 \nu_{n-1}\rho(x)=1\mathbin{\ominus}x\mathbin{\ominus}1.
\]
By Theorem~\ref{Greene-theorem}, the Greene invariants of a direct sum
are the sums of the invariants of its two blocks, whereas the row lengths
for a skew sum are the multiset union of the row lengths of its blocks.
Since $\operatorname{sh}(21)=(1,1)$ and
$\operatorname{sh}(1)=(1)$, the two displayed formulas follow.
\end{proof}

\begin{proposition}\label{mid-uniq-prop}
There is a unique middle element in $\cF_n$, namely
\[
 w_n=(1,p+1)(2,p+2)\cdots(p,2p).
\]
Moreover,
\[
                         w_n=\nu_p\rho(w_{n-2}).
\]
\end{proposition}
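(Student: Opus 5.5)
We need the descent criterion for fixed-point-free involutions: $s_i\in\tau_\m(w)$ if and only if $w(i)>w(i+1)$. This is the criterion the paper already uses in Lemma~\ref{tau-lem1} and in the proof of Theorem~\ref{nu-2-thm}; it holds because $s_iws_i\le w$ exactly when conjugation by $s_i$ lowers the length. With it, $\tau_\m(w)=\{s_p\}$ says that the one-line word of $w$ is increasing on positions $1,\dots,p$, increasing on positions $p+1,\dots,2p$, and has a descent at $p$. The plan has two parts: first uniqueness and the explicit form, then the recursive identity.

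For the first part, let $w$ be a middle element, and let $A=\{1,\dots,p\}$ and $B=\{p+1,\dots,2p\}$. The first step is to show that no cycle of $w$ lies inside $A$. Suppose $w$ contains a cycle $(a,b)$ with $a<b\le p$. Then $w(a)=b>a=w(b)$, which contradicts the fact that $w$ is increasing on $A$. The same argument shows that no cycle lies inside $B$. Since $w$ is fixed-point-free, $w$ therefore restricts to a bijection $A\to B$. This bijection is increasing, so it is forced to be $i\mapsto p+i$, and hence $w=w_n$.

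Conversely, $w_n$ has one-line word $p+1,\dots,2p,1,\dots,p$. This word is increasing on each half and has $w_n(p)=2p>1=w_n(p+1)$, so $\tau_\m(w_n)=\{s_p\}$. This settles existence. The only degenerate case is $n=2$, where $w_2=s_1$ and $\tau_\m(w_2)=\{s_1\}$ is consistent.

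For the identity $w_n=\nu_p\rho(w_{n-2})$, we track cycles through $\rho$ and then through $\nu_p$. The element $w_{n-2}=\prod_{i=1}^{p-1}(i,p-1+i)$ lies in $S_{n-2}$. We have $\rho(z)=w_0zs_{n-1}w_0$, and conjugation by $w_0$ sends $j$ to $n+1-j$. So $s_{n-1}=(n-1,n)$ becomes $(1,2)$, and $(i,p-1+i)$ becomes $(n+1-i,\,n+2-p-i)=(2p+1-i,\,p+2-i)$. Reindexing by $k=p+1-i$, which runs from $1$ to $p-1$, gives
\[
\rho(w_{n-2})=(1,2)\prod_{k=1}^{p-1}(k+2,\,p+1+k).
\]
This matches $\rho(x)=21\oplus x$ from Lemma~\ref{shape-transport-lem}, since there $x$ is shifted by $2$.

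Next, $\nu_p$ is conjugation by $\sigma_p=s_p\cdots s_1$, so each label $a$ is replaced by $\sigma_p(a)$. By the description in the proof of Proposition~\ref{vert-stair-prop}, $\sigma_p$ sends $1\mapsto p+1$, sends $a\mapsto a-1$ for $2\le a\le p+1$, and fixes every label $a\ge p+2$. Under this relabeling $(1,2)$ becomes $(p+1,1)$. For $k\le p-1$ we have $k+2\le p+1$, so $k+2$ becomes $k+1$; and $p+1+k\ge p+2$ is fixed. Hence $(k+2,p+1+k)$ becomes $(k+1,p+1+k)$. Together these give exactly $\prod_{i=1}^{p}(i,p+i)=w_n$.

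The main obstacle is bookkeeping rather than any conceptual step. One must get the direction of $\sigma_p$'s relabeling right, choosing between $\sigma_p$ and $\sigma_p^{-1}$ in $\nu_p(w)=\sigma_pw\sigma_p^{-1}$. One must also confirm the orientation in which $w_0$ conjugation places the cycle $(1,2)$ first. I would check both against the sample case $n=4$: there $w_2=(1,2)$, $\rho(w_2)=(1,2)(3,4)$, and conjugating by $s_2s_1$ should give $(1,3)(2,4)=w_4$.
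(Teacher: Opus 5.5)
Your proof is correct and follows the paper's own argument: you use the descent criterion $\tau_\m(w)=\{s_i:w(i)>w(i+1)\}$, observe that no arc can lie inside either half so that $w$ is an increasing bijection $[1,p]\to[p+1,2p]$, and evaluate $\rho$ and $\nu_p$ directly, which you carry out more explicitly than the paper does. There are two cosmetic slips: the reindexing that yields $\prod_{k=1}^{p-1}(k+2,p+1+k)$ is $k=p-i$ rather than $k=p+1-i$ (your displayed product is nonetheless correct), and your one-line justification of the descent criterion should also cover the case $s_iws_i=w$, i.e.\ $w(i)=i+1$, where $s_i\in\tau_\m(w)$ without any drop in length.
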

\begin{proof}
For an FPF involution $w$, the definition of $\tau_\m$ gives
\[
 \tau_\m(w)=\{s_i:w(i)>w(i+1)\}.
\]
If $\tau_\m(w)=\{s_p\}$, then $w$ is increasing on each of the intervals
$[1,p]$ and $[p+1,2p]$.  No transposition $(a,b)$ of $w$ can have both
endpoints in one of these intervals: if $a<b$ were in the same interval,
then $w(a)=b>a=w(b)$ would contradict the required increase.  Hence every
edge joins the two intervals.  The increasing sequence
$w(1),\ldots,w(p)$ must therefore be $p+1,\ldots,2p$, so
$w(i)=p+i$ and $w=w_n$.  Conversely this $w_n$ has its unique descent at
$p$, and hence is a middle element.  Finally, direct evaluation of $\rho$
and $\nu_p$ on the displayed transpositions gives
$\nu_p\rho(w_{n-2})=w_n$.
\end{proof}

\begin{definition}\label{nonnesting-definition}
View an element of $\cF_n$ as a perfect matching on $[n]$.  It is called
\emph{nonnesting} if it has no two arcs $(a,d)$ and $(b,c)$ with
$a<b<c<d$.
\end{definition}

\begin{lemma}\label{nonnesting-shape}
For $n=2p$ and $x\in\cF_n$,
\[
 x\text{ is nonnesting}
 \quad\Longleftrightarrow\quad
 \operatorname{sh}(x)=(p,p).
\]
\end{lemma}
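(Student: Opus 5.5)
We prove the lemma with Greene's theorem (Theorem~\ref{Greene-theorem}), treating $x$ as a permutation of $[n]$ with $\operatorname{sh}(x)$ its Robinson--Schensted shape. Since $x$ is an involution with no fixed points, Schützenberger's theorem says $\operatorname{sh}(x)$ has exactly as many columns of odd length as $x$ has fixed points, namely zero. So every column of $\operatorname{sh}(x)$ has even length. In particular $\operatorname{sh}(x)$ has at least two rows, and $\operatorname{sh}(x)=(p,p)$ holds exactly when it has at most two rows. By Greene's theorem applied to decreasing subsequences (equivalently, to the conjugate shape), this happens exactly when the longest decreasing subsequence of $x$ has length at most $2$. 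The lemma therefore reduces to the following claim: $x$ is nonnesting if and only if $x$ contains no decreasing subsequence of length $3$.

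\emph{A nesting gives a decreasing subsequence of length $3$.} Suppose $x$ has arcs $(a,d)$ and $(b,c)$ with $a<b<c<d$. Then $x(a)=d$, $x(b)=c$, $x(c)=b$ and $x(d)=a$, so the positions $a<b<c<d$ carry the values $d>c>b>a$. This is a decreasing subsequence of length $4$, and in particular one of length $3$.

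\emph{A decreasing subsequence of length $3$ gives a nesting.} Suppose positions $i<j<k$ satisfy $x(i)>x(j)>x(k)$. We use the middle element $j$ and its partner $m=x(j)$, and split into cases.
\begin{itemize}
\item If $m>j$, then $x(i)>m>j>i$, so $x(i)>i$. Since $x(i)\neq x(j)=m$, the arc $(i,x(i))$ has both endpoints different from those of $(j,m)$, and $i<j<m<x(i)$. This is a nesting.
\item If $m<j$, then $x(k)<m<j<k$, so $x(k)<k$ and $x(k)\neq m$. The arc $(x(k),k)$ satisfies $x(k)<m<j<k$, which again nests over $(j,m)$.
\end{itemize}
The case $m=j$ cannot occur because $x$ has no fixed points.

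The main obstacle is the first step, namely the fact that an FPF involution has a shape with all columns of even length. For this I would cite Schützenberger's result directly: the number of fixed points of an involution equals the number of odd-length columns of its shape. Alternatively, one can note that Beissinger insertion inserts each cycle as a vertical domino, which again forces even columns. Once this is in place, two-row shapes of size $2p$ with even columns are forced to be $(p,p)$, and the remaining steps are elementary.
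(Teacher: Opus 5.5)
Your proof is correct and follows the paper's route. You get even column lengths from the fixed-point count, use Schensted's theorem to reduce to the absence of a decreasing subsequence of length $3$, and take the decreasing subsequence $d,c,b,a$ from a nesting. The only difference is in the converse: you build a nesting directly by a case analysis on the middle entry of a decreasing triple, whereas the paper notes that in a nonnesting matching the values at left endpoints and at right endpoints each increase, so every inversion runs from a left to a right endpoint and no decreasing triple can exist.
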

\begin{proof}
If $(a,d)$ and $(b,c)$ form a nesting with $a<b<c<d$, then the entries
of the one-line notation in positions $a,b,c,d$ are $d,c,b,a$, a
decreasing subsequence of length four.  Conversely, in a nonnesting
matching the values at the left endpoints increase from left to right,
and the same holds at the right endpoints.  A descent can only go from
a left endpoint to a right endpoint, so there is no decreasing
subsequence of length three.

Schensted's theorem states that the length of a longest decreasing
subsequence of $w$ is the length of the first column of
$\operatorname{sh}(w)$ \cite{Schensted}.  Moreover, for an involution
the number of fixed points equals the number of odd-length columns in
its Robinson--Schensted tableau \cite[Section~3]{Beissinger}.  Thus an
FPF involution has only even column lengths.  For a nonnesting element
the first column has length at most two, so every nonzero column has
length two; since there are $2p$ boxes, its shape is $(p,p)$.  In the
other direction, a nesting gives a decreasing subsequence of length
four, so Schensted's theorem rules out the shape $(p,p)$.
\end{proof}

\begin{theorem}\label{D1-rec-thm}
For every even $n$,
\[
                         D_1^n=C_1^n.
\]
\end{theorem}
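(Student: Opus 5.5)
The plan is to identify both sides with a single shape class, namely the FPF involutions whose Robinson--Schensted shape is $(p,p)$, where $n=2p$. I read $D_1^n$ as the set of nonnesting elements of $\cF_n$ (the distinguished nonnesting set). If instead $D_1^n$ is defined recursively, as the notation ``rec'' suggests, I will first show that the recursion produces exactly the nonnesting matchings; that is done by induction on $n$ using the decomposition of Theorem~\ref{recurs-thm}, as described at the end.

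\textbf{Step 1: $C_1^n$ is a shape class.} Theorem~\ref{shape-molecule-class} says that $C_1^n=\mathscr C(x_1)$ is precisely the set of $x\in\cF_n$ with $\operatorname{sh}_{\m}(x)=\operatorname{sh}_{\m}(x_1)$. Row Beissinger insertion agrees with ordinary Robinson--Schensted insertion on involutions \cite[Theorem~3.1]{Beissinger}, which was already used in the proof of Theorem~\ref{between-mole-thm}. Hence $\operatorname{sh}_{\m}=\operatorname{sh}$ on $\cF_n$, and
\[
C_1^n=\{x\in\cF_n:\operatorname{sh}(x)=\operatorname{sh}(x_1)\}.
\]

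\textbf{Step 2: compute $\operatorname{sh}(x_1)$.} The element $x_1=s_1s_3\cdots s_{n-1}=(1,2)(3,4)\cdots(n-1,n)$ is visibly nonnesting, so Lemma~\ref{nonnesting-shape} gives $\operatorname{sh}(x_1)=(p,p)$. As a cross-check, iterating Lemma~\ref{shape-transport-lem} from $\operatorname{sh}(x_1^2)=(1,1)$ gives $(p,p)$ directly, since $x_1^n=\rho(x_1^{n-2})$. Combining with Lemma~\ref{nonnesting-shape} once more,
\[
C_1^n=\{x\in\cF_n:\operatorname{sh}(x)=(p,p)\}=\{x\in\cF_n:x\text{ nonnesting}\}=D_1^n.
\]

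\textbf{Step 3 (only if $D_1^n$ is recursive).} Suppose $D_1^n$ is defined as $\rho(D_1^{n-2})$ together with certain $\nu_j$-translates, for $j$ up to the first staircase column. Then I will prove by induction that this set equals the nonnesting matchings. The key points are as follows. Theorem~\ref{C1-thm1} handles the $Y_1$ part. For each $x\in Y_1$, Theorem~\ref{between-mole-thm} shows that the indices $j$ with $\nu_j(x)\in C_1^n$ form an initial interval. The endpoint of that interval is where the arc $(1,j+1)$ first nests over an arc of $x$; this is also where Proposition~\ref{difmole-prop} places the boundary.

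The main obstacle is exactly this matching of the recursive cutoff with the first nesting. I would first try to settle it by reading off nestings of $\nu_j(x)$ directly from the relabeling formula in Lemma~\ref{nu-lem}. Under the nonnesting reading of $D_1^n$, no real obstacle arises beyond citing the insertion comparison correctly.
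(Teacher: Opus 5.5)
Your Steps~1--2 reproduce the last paragraph of the paper's proof ($x_1$ is nonnesting, then Lemma~\ref{nonnesting-shape} and Theorem~\ref{shape-molecule-class}), and noting $\operatorname{sh}_{\m}=\operatorname{sh}$ via Beissinger is a useful precision. However, in the paper $D_1^n$ is not defined as the nonnesting set. Its proof speaks of ``the recursion defining $D_1^n$'': the elements $\nu_j\rho(u)$ with $u\in D_1^{n-2}$ and $1\le j<k_z$, where $z=\rho(u)$ and $k_z=z(3)-1$ is the staircase column. So your Step~3 carries the entire content of the theorem. There the decisive fact, that the recursive cutoff coincides with the first nesting, is only named as ``the main obstacle'' and never proved.

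The paper proves it by a direct arc count. Conjugation by $\sigma_j$ sends $1\mapsto j+1$, sends $a\mapsto a-1$ for $2\le a\le j+1$, and fixes larger labels. This is the relabeling in the proof of Proposition~\ref{vert-stair-prop}, not the commutation rules of Lemma~\ref{nu-lem}. So in $\nu_j(z)$ the arc $(1,2)$ becomes $(1,j+1)$, and all other arcs are relabeled order-preservingly. Those arcs are mutually nonnesting iff $u$ is, since $\rho$ only reverses and shifts $u$. Because $1$ is the smallest label, the only possible new nesting is $(1,j+1)$ covering an arc of $z$ inside $\{3,\dots,j+1\}$. If $u$ is nonnesting, this happens iff $z(3)\le j+1$: when $z(3)>j+1$, any arc inside $\{4,\dots,j+1\}$ would be nested in $(3,z(3))$. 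Hence $\nu_j\rho(u)$ is nonnesting iff $u$ is nonnesting and $j<k_z$, which is exactly the recursion. Induction plus your Steps~1--2 then finishes the proof.

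The molecule-theoretic route you sketch does not close the gap on its own. Theorem~\ref{between-mole-thm} says only that $\{j:\nu_j(x)\in C_1^n\}$ is an interval, not that it contains $j=1$. You would still need that $\nu_j(x)\in C_1^n$ forces $x\in C_1^n$; otherwise $C_1^n$ could contain elements not coming from $\rho(C_1^{n-2})$, since Theorem~\ref{C1-thm1} only controls $C_1^n\cap Y_1$. Proposition~\ref{difmole-prop} gives only the upper cutoff $j<x(3)-1$, and nothing you cite shows that every $j<x(3)-1$ is actually attained. Both points can be patched. The first follows from the dominance chain in the proof of Theorem~\ref{between-mole-thm} together with maximality of $(p,p)$ among shapes with even columns. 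The second follows from Corollary~\ref{tau-cor}, since $s_j\notin\tau_\m(x)$ for $2\le j\le x(3)-2$ when $x$ is nonnesting. But the elementary arc count above makes this machinery unnecessary.
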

\begin{proof}
We prove by induction that $D_1^n$ is exactly the set of
nonnesting matchings in $\cF_n$.  This is clear for $n=2$.  Every
$x\in \cF_n$ has a unique expression
\[
                         x=\nu_j(z),
 \qquad z=\rho(u),\quad u\in \cF_{n-2},\quad 1\leq j\leq n-1.
\]
The pair $(1,2)$ of $z$ becomes $(1,j+1)$ in $x$, and deleting this
pair and standardizing the remaining labels recovers $u$.  Put
$k_z=z(3)-1$.  If $u$ is nonnesting, then the first residual arc of
$z$ starts at $3$ and ends at $z(3)$.  After applying $\nu_j$, the
new arc $(1,j+1)$ contains a residual arc precisely when
\[
                         z(3)\leq j+1,
\]
or equivalently when $j\geq k_z$.  If $j<k_z$, that first residual
arc crosses the right endpoint $j+1$; any residual arc lying entirely
inside $(1,j+1)$ would be nested inside it, which is impossible.
Consequently,
\[
 \nu_j(\rho(u))\text{ is nonnesting}
 \quad\Longleftrightarrow\quad
 u\text{ is nonnesting and }1\leq j<k_z.
\]
This is exactly the recursion defining $D_1^n$, so the induction is
complete.

Finally, $x_1$ is nonnesting.  By Lemma~\ref{nonnesting-shape} and
Theorem~\ref{shape-molecule-class}, its molecule $C_1^n$ is precisely the
set of nonnesting matchings.  Therefore $D_1^n=C_1^n$.
\end{proof}

When $n=4$ and $n=6$, the molecule $C_1^n$ is shown in Figure \ref{figure1}. We can see how the recursive structure works immediately. The molecule $C_1^n$ is determined by the previously identified molecule $C_1^{n-2}$, following a straightforward rule. 
        
For $1\leq k\leq p$, set
\[
 \lambda_{p,k}
   =(p-k+1,p-k+1,1^{\,2k-2}).
\]

\begin{lemma}\label{distinguished-shape-lem}
For $n=2p$ and $1\leq k\leq p$,
\[
       \operatorname{sh}\bigl(x_{(2k-1)!!}\bigr)
       =\lambda_{p,k}.
\]
\end{lemma}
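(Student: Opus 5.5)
The plan is to argue by induction on $p$, using only the recursive labeling from Section~\ref{recurs-sect} and Lemma~\ref{shape-transport-lem}. The first step is to unwind the labeling. By construction, the labels $x_1,\ldots,x_{(n-3)!!}$ of $\cF_n$ are the elements $\rho(x_1^{n-2}),\ldots,\rho(x_{(n-3)!!}^{n-2})$ of $Y_1$. More generally, $x_{i+(j-1)(n-3)!!}=\nu_j(x_i)$ for $1\leq i\leq (n-3)!!$ and $1\leq j\leq n-1$.

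Take $n=2p$ and $1\leq k\leq p-1$. Then $(2k-1)!!\leq(2p-3)!!=(n-3)!!$, so the index falls in the first block:
\[
 x_{(2k-1)!!}^n=\rho\bigl(x_{(2k-1)!!}^{n-2}\bigr).
\]
For $k=p$, the index $(2p-1)!!=(n-1)!!$ is the last label. It corresponds to $i=(n-3)!!$ and $j=n-1$, because $(n-3)!!+(n-2)(n-3)!!=(n-1)!!$. Therefore
\[
 x_{(2p-1)!!}^n=\nu_{n-1}\rho\bigl(x_{(2p-3)!!}^{n-2}\bigr).
\]

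The base case is $p=1$. Here $\cF_2=\{(1,2)\}$, whose one-line word is $21$, so its shape is $(1,1)=\lambda_{1,1}$. For the inductive step, assume $\operatorname{sh}\bigl(x_{(2k-1)!!}^{n-2}\bigr)=\lambda_{p-1,k}$ for every $1\leq k\leq p-1$.

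When $k\leq p-1$, the first formula of Lemma~\ref{shape-transport-lem} adds one box to each of the first two rows of $\lambda_{p-1,k}=(p-k,p-k,1^{2k-2})$. This gives $(p-k+1,p-k+1,1^{2k-2})=\lambda_{p,k}$. Since $p-k\geq 1$, the first two rows are indeed the rows of length $p-k$, so no reordering of rows is needed.

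When $k=p$, the inductive hypothesis with $k=p-1$ gives $\operatorname{sh}\bigl(x_{(2p-3)!!}^{n-2}\bigr)=\lambda_{p-1,p-1}=(1^{2p-2})$. The second formula of Lemma~\ref{shape-transport-lem} then appends two rows of length one, giving $(1^{2p})=\lambda_{p,p}$. This is consistent with the earlier remark that $C_p^n=\{w_0\}$, since $w_0$ is decreasing.

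I do not expect a genuine obstacle here. The point needing the most care is the index bookkeeping, namely confirming that $(2k-1)!!$ lands in the block $Y_1$ exactly when $k<p$ and that $(n-1)!!$ is the pair $(i,j)=((n-3)!!,n-1)$. A secondary point is that Lemma~\ref{shape-transport-lem} is stated for ordinary Robinson--Schensted shape $\operatorname{sh}$, which matches the statement being proved. If $\operatorname{sh}_\m$ were wanted instead, I would invoke the agreement of row Beissinger insertion with Robinson--Schensted on involutions, as in the proof of Theorem~\ref{between-mole-thm}.
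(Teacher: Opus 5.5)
Your proof is correct and is essentially the paper's argument: the paper starts from the top label $x^{2k}_{(2k-1)!!}$ at rank $2k$, which has shape $(1^{2k})$, and applies the first formula of Lemma~\ref{shape-transport-lem} exactly $p-k$ times, which is your $k<p$ step unrolled. The one difference is how you get the shape $(1^{2k})$ of that top label: you obtain it inductively from the second formula of Lemma~\ref{shape-transport-lem}, whereas the paper reads it off by identifying that label with $w_0$, an identification it asserts without proof.
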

\begin{proof}
At rank $2k$, the labeling gives
$x_{(2k-1)!!}=w_0$, whose one-line notation is decreasing and whose
shape is $(1^{2k})$.  For every subsequent increase of the rank by
two, the same labeled vertex is carried forward by $\rho$.  Applying
Lemma \ref{shape-transport-lem} exactly $p-k$ times changes
$(1^{2k})$ into
\[
                    (p-k+1,p-k+1,1^{2k-2}),
\]
as required.
\end{proof}

\begin{theorem}\label{main thm}
If $1<k<p$, then
\[
\begin{split}
 C_k^n
 &=\mathscr C\bigl(x_{(2k-1)!!}\bigr)\\
 &=\mathscr C\bigl(\nu_{n-1}\rho(C_{k-1}^{n-2})\bigr)\\
 &=\mathscr C\bigl(\rho(C_k^{n-2})\bigr).
\end{split}
\]
\end{theorem}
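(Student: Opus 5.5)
The plan is to reduce everything to insertion shapes. By Theorem~\ref{shape-molecule-class}, the molecule $\mathscr C(w)$ is the set of all $v\in\cF_n$ with $\operatorname{sh}_{\m}(v)=\operatorname{sh}_{\m}(w)$. Row Beissinger insertion agrees with Robinson--Schensted on involutions, so $\operatorname{sh}_{\m}=\operatorname{sh}$ on $\cF_n$. Hence it suffices to show that the three families $\{x_{(2k-1)!!}\}$, $\nu_{n-1}\rho(C_{k-1}^{n-2})$ and $\rho(C_k^{n-2})$ are nonempty and all consist of elements of the single shape $\lambda_{p,k}=(p-k+1,p-k+1,1^{2k-2})$. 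The molecule generated by any such nonempty set is then the molecule of shape $\lambda_{p,k}$, which is $C_k^n$ by definition. The first equality is just the definition of $C_k^n$.

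For the third expression, note first that $1<k<p$ means $k\le p-1$, so $C_k^{n-2}=\mathscr C(x^{n-2}_{(2k-1)!!})$ is defined. Applying Lemma~\ref{distinguished-shape-lem} at rank $n-2=2(p-1)$ gives that every $u\in C_k^{n-2}$ has shape $\lambda_{p-1,k}=(p-k,p-k,1^{2k-2})$. The first formula of Lemma~\ref{shape-transport-lem} adds $1$ to the first two rows, so $\operatorname{sh}(\rho(u))=(p-k+1,p-k+1,1^{2k-2})=\lambda_{p,k}$. This matches the shape of $x_{(2k-1)!!}$ from Lemma~\ref{distinguished-shape-lem} at rank $n$. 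One can also observe directly that $x^n_{(2k-1)!!}=\rho(x^{n-2}_{(2k-1)!!})\in\rho(C_k^{n-2})$ by the labeling, so the two sets share an element and Theorem~\ref{shape-molecule-class} is not even needed for this step.

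For the second expression, $k-1\ge1$ so $C_{k-1}^{n-2}$ is defined, and each $u$ in it has shape $\lambda_{p-1,k-1}=(p-k+1,p-k+1,1^{2k-4})$. Here I need $u\in\cF_{n-2}$ so that $\rho(u)\in Y_1$ and $\nu_{n-1}$ applies. The second formula of Lemma~\ref{shape-transport-lem} appends two rows of length $1$, giving $(p-k+1,p-k+1,1^{2k-2})=\lambda_{p,k}$. Thus all of $\nu_{n-1}\rho(C_{k-1}^{n-2})$ lies in the molecule of shape $\lambda_{p,k}$, that is, in $C_k^n$.

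The main obstacle is being careful that the shape in Lemma~\ref{shape-transport-lem} is the Robinson--Schensted shape of the one-line word and that it coincides with $\operatorname{sh}_{\m}$. I would cite \cite[Theorem~3.1]{Beissinger}, as in the proof of Theorem~\ref{between-mole-thm}. A second check is that the skew-sum rule in Lemma~\ref{shape-transport-lem} still gives a partition here, which holds because the appended rows of length $1$ do not exceed $p-k+1\ge2$. I would also remark that the hypothesis $1<k<p$ is exactly what makes both $C_{k-1}^{n-2}$ and $C_k^{n-2}$ meaningful: $k=1$ has no $C_0$, and $k=p$ has $C_p^{n-2}$ undefined, the case $C_p^n=\{w_0\}$ being handled separately.
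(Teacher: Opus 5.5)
Your proposal is correct and follows the paper's proof essentially verbatim: you compute with Lemmas~\ref{distinguished-shape-lem} and~\ref{shape-transport-lem} that both image sets consist of elements of shape $\lambda_{p,k}$, then use Theorem~\ref{shape-molecule-class} to identify that shape fiber with $C_k^n$, and you add the welcome extra care of noting $\operatorname{sh}_{\m}=\operatorname{sh}$ via Beissinger. The only slip is in your aside: the shared element $x^n_{(2k-1)!!}$ does not by itself place all of $\rho(C_k^{n-2})$ in one molecule, so bypassing Theorem~\ref{shape-molecule-class} there also requires Proposition~\ref{rho-bi-prop}, as in the proof of Theorem~\ref{C1-thm1}.
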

\begin{proof}
By Lemma \ref{distinguished-shape-lem}, $C_k^n$ is the molecule of
shape
\[
                         \lambda_{p,k}
       =(p-k+1,p-k+1,1^{2k-2}).
\]
Every element of $C_k^{n-2}$ has shape
\[
                         \lambda_{p-1,k}
       =(p-k,p-k,1^{2k-2}).
\]
The first formula in Lemma \ref{shape-transport-lem} therefore shows
that every element of $\rho(C_k^{n-2})$ has shape
$\lambda_{p,k}$.

Likewise, every element of $C_{k-1}^{n-2}$ has shape
\[
                         \lambda_{p-1,k-1}
       =(p-k+1,p-k+1,1^{2k-4}).
\]
The second formula in Lemma \ref{shape-transport-lem} shows that every
element of $\nu_{n-1}\rho(C_{k-1}^{n-2})$ also has shape
$\lambda_{p,k}$.  Both displayed image sets are nonempty, and by
Theorem~\ref{shape-molecule-class} the full fiber of the shape
$\lambda_{p,k}$ is the single molecule $C_k^n$.  Taking the molecule
containing either image set gives the asserted equalities.
\end{proof}
We illustrate these recursive descriptions in Figure \ref{figure1}. The elements of the same molecule are represented by the same color. For cyan, blue and orange molecules, we can find their specific elements. The mapping $\nu_8$ permutes the molecules of $\cF_6$. However, there are not only $p$ molecules in $\cF_n$, but also some additional molecules, such as the white part in Figure \ref{figure1} and it generates molecule in larger $\cF_n$.

\section{Cells}\label{cell-sect}

The insertion-shape classification in
Theorem~\ref{shape-molecule-class} classifies molecules, not cells.  We do
not assume any general classification of the cells of the Gelfand
$S_n$-graphs.  Since every bidirected edge is directed in both directions,
each molecule is contained in a unique cell.  Let $\mathcal K_1^n$ denote
the cell containing $C_1^n$.  The question in this section is whether
$\mathcal K_1^n=C_1^n$.

We now prove that the distinguished molecule is a cell.  The argument does
not use a classification of cells.  Instead, it constructs a quotient onto a
two-row Specht module and identifies its canonical-basis kernel by integral
Garnir straightening.

\subsection{A two-row Specht quotient}

For a partition $\lambda\vdash n$, let $\operatorname{Std}(\lambda)$ denote
the set of standard Young tableaux of shape $\lambda$: each of
$1,\ldots,n$ occurs exactly once, and the entries increase from left to right
in every row and from top to bottom in every column.  A tableau is
\emph{column-standard} if its entries increase from top to bottom in each
column; no condition is imposed on its rows.

Fix $n=2r$, and write
\[
 x=(a_1,b_1)\cdots(a_r,b_r),\qquad
 a_1<\cdots<a_r,\quad a_j<b_j .
\]
Associate to $x$ the column-standard tableau
\[
 T_x=\begin{pmatrix}
 a_1&\cdots&a_r\\
 b_1&\cdots&b_r
 \end{pmatrix}
\]
of shape $(r,r)$.  The tableau $T_x$ is standard if and only if
$b_1<\cdots<b_r$, equivalently, if and only if $x$ is nonnesting.  Thus
\[
 x\in C_1^n\quad\Longleftrightarrow\quad
 T_x\in\operatorname{Std}(r,r).
 \tag{7.1}\label{C1-standard}
\]

We use the following concrete meaning of the two-row Specht module.  Over
$\mathcal A=\mathbb Z[v,v^{-1}]$, the module $S^{(r,r)}$ is the
$\mathcal H_n$-module obtained from the free module on tableaux of shape
$(r,r)$ by imposing Welsh's Hecke column relations and Garnir relations
\cite[(4) and (7)]{Welsh}.  Adjacent labels are first permuted and the result
is then straightened with these relations.  Welsh's standard-basis result \cite[Section~2]{Welsh} says that
\[
 \{\mathbf v_T:T\in\operatorname{Std}(r,r)\}
\]
is an $\mathcal A$-basis of $S^{(r,r)}$; for a nonstandard $T$,
$\mathbf v_T$ denotes its class and must be straightened in this basis.
Thus ``two-row'' refers to the indexing partition $(r,r)$, rather than to a
new module attached to the matching model.

Let
\[
 t^-=\begin{pmatrix}1&3&\cdots&2r-1\\2&4&\cdots&2r\end{pmatrix}
\]
be the column-superstandard tableau.  The \emph{column-reading word} of a
tableau is obtained by reading each column from top to bottom and the columns
from left to right.  For a tableau $T$, let $w_T$ be the unique permutation
which sends $t^-$ to $T$ by permuting entries in these positions.  We put
\[
 e_T=v^{-\ell(w_T)}\mathbf v_T.
 \tag{7.2}\label{normalized-tableau}
\]
Welsh uses generators $h_i$ and a parameter $q$ satisfying
$h_i^2=(q-1)h_i+q$.  Throughout this section we make the substitution
\[
 q=v^2,\qquad h_i=vH_i.
 \tag{7.3}\label{parameter-conversion}
\]
It converts Welsh's action into the Hecke algebra convention of this paper.

An elementary inversion count gives
\[
 \ell(w_{T_x})=\h(x).
 \tag{7.4}\label{height-tableau-length}
\]
Indeed, for each pair of edges, the separated, crossing, and nested endpoint
orders contribute respectively $0$, $1$, and $2$ inversions to the
column-reading word, exactly as in the definition of $\h$.

We also fix the convention needed to apply the web-basis results.  For
$u,d\in S_{2r}$, write $u\leq_{\rm wk}d$ in right weak order if a reduced
expression for $u$ is an initial segment of a reduced expression for $d$.
Let $t^{(2^r)}=(t^-)^{\mathsf t}$ and set
\[
 \mathcal D^{\rm st}_{(2^r)}
 =\{d\in S_{2r}:t^{(2^r)}d\in\operatorname{Std}((2^r))\},
 \qquad \eta(d)=t^{(2^r)}d.
\]
Thus $\eta$ transports right weak order to
$\operatorname{Std}((2^r))$.  Heard and Kujawa use right modules.  If
$z_{(r,r)}$ denotes their cyclic Specht generator, their standard-basis
formula is
\[
 v_t^{\rm HK}:=z_{(r,r)}T_{\eta^{-1}(t)},
 \qquad t\in\operatorname{Std}((2^r)).
\]
Here $T_w$ is the standard Hecke element attached to a reduced expression of
$w$.  In particular, if
$\eta^{-1}(t')=\eta^{-1}(t)s_i$ and the length increases by one, then
$v_t^{\rm HK}T_i=v_{t'}^{\rm HK}$.  This is the cover formula used below
\cite[Section~2.5]{HK}.

Transporting their right action through the anti-involution $H_i\mapsto H_i$
and transposing tableaux identifies
\[
 e_T\longleftrightarrow v_{T^{\mathsf t}}^{\rm HK}
 \qquad(T\in\operatorname{Std}(r,r)),
 \tag{7.5}\label{HK-basis-identification}
\]
after setting their parameter equal to $v$.  Both sides agree on $t^-$, and
the cover formula just stated agrees with Welsh's normalized generator
formula.  Induction on $\ell(w_T)$ proves the identification.  Thus every
use below of the standard-to-web transition matrix applies to the normalized
basis $\{e_T\}$, not to an unnormalized tableau basis.

We next derive the local straightening identities.  Fix $a<b<c<d$, keep all
other columns fixed, and abbreviate the following adjacent two-column
configurations by
\[
 A=\begin{pmatrix}a&c\\ b&d\end{pmatrix},\qquad
 X=\begin{pmatrix}a&b\\ c&d\end{pmatrix},\qquad
 Z=\begin{pmatrix}a&b\\ d&c\end{pmatrix}.
\]
Let $A^{\rm rev},X^{\rm rev},Z^{\rm rev}$ denote the tableaux obtained by
reversing the order of these two columns.  Explicitly,
\[
 A^{\rm rev}=\begin{pmatrix}c&a\\d&b\end{pmatrix},\quad
 X^{\rm rev}=\begin{pmatrix}b&a\\d&c\end{pmatrix},\quad
 Z^{\rm rev}=\begin{pmatrix}b&a\\c&d\end{pmatrix}.
\]

\begin{lemma}\label{two-column-straightening}
In the normalization \eqref{normalized-tableau}, one has
\[
 e_Z=ve_X-v^2e_A,
 \tag{7.6}\label{Z-straightening}
\]
and
\[
 e_{A^{\rm rev}}=v^2e_A,\qquad
 e_{X^{\rm rev}}=v(e_A+e_Z),\qquad
 e_{Z^{\rm rev}}=e_Z.
 \tag{7.7}\label{reversed-straightening}
\]
These identities remain valid inside a larger tableau when the displayed
columns are adjacent and all complementary columns are fixed.
\end{lemma}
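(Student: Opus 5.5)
The plan is to reduce all four identities to a computation in the two-dimensional Hecke module $S^{(2,2)}$, and then to argue that the normalization \eqref{normalized-tableau} is compatible with embedding the two columns into a larger tableau. Relabel $a<b<c<d$ as $1<2<3<4$. Then $A=t^-$ for $r=2$, and $\{e_A,e_X\}$ is Welsh's standard basis of $S^{(2,2)}$. Counting inversions of the column-reading words gives
\[
\ell(w_A)=0,\quad \ell(w_X)=1,\quad \ell(w_Z)=2,\quad \ell(w_{Z^{\rm rev}})=2,\quad \ell(w_{X^{\rm rev}})=3,\quad \ell(w_{A^{\rm rev}})=4 .
\]
Each of the last five tableaux is reached from $A$ by a chain of adjacent transpositions $s_i$, each raising $\ell(w_T)$ by one. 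Concretely, $X=s_2A$, $Z=s_3X$, $Z^{\rm rev}=s_1Z$, $X^{\rm rev}=s_3Z^{\rm rev}$, and $A^{\rm rev}=s_2X^{\rm rev}$.

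By the cover formula and the substitution \eqref{parameter-conversion}, $h_i\mathbf v_T=\mathbf v_{s_iT}$ becomes $H_ie_T=e_{s_iT}$ whenever the length rises. Hence each of $e_Z$, $e_{Z^{\rm rev}}$, $e_{X^{\rm rev}}$, $e_{A^{\rm rev}}$ equals an explicit word in $H_1,H_2,H_3$ applied to $e_A$:
\[
e_Z=H_3H_2e_A,\quad e_{Z^{\rm rev}}=H_1H_3H_2e_A,\quad e_{X^{\rm rev}}=H_3H_1H_3H_2e_A,\quad e_{A^{\rm rev}}=H_2H_3H_1H_3H_2e_A .
\]
So it suffices to know the matrices of $H_1,H_2,H_3$ on the basis $(e_A,e_X)$.

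\begin{itemize}
\item On $e_A$: $H_2e_A=e_X$. The column relations give $H_1e_A=H_3e_A=-v^{-1}e_A$, since $s_1$ and $s_3$ permute within a column of $t^-$.
\item On $e_X$: the quadratic relation gives $H_2e_X=e_A+(v-v^{-1})e_X$. The entries $H_1e_X$ and $H_3e_X$ are tableaux that are column-standard but not row-standard, namely $s_1X=\begin{pmatrix}2&1\\3&4\end{pmatrix}$ after column sorting, and $Z$ itself. These are exactly where Welsh's Garnir relation \cite[(7)]{Welsh} enters.
\end{itemize}

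The key step is therefore one Garnir straightening, namely \eqref{Z-straightening}. Translating back, I expect Welsh's relation to read $\mathbf v_Z=q\mathbf v_X-q^2\mathbf v_A$, which is exactly $e_Z=ve_X-v^2e_A$ after \eqref{parameter-conversion}. Once this is fixed, $H_3$ is determined on the basis, and $H_1$ follows by the symmetric computation (or from the braid relations, which then force it). The three reversed identities \eqref{reversed-straightening} are then obtained by multiplying out the words above. As consistency checks, the eigenvalues of each $H_i$ should be $v$ and $-v^{-1}$, and the braid relations must hold. The relation $e_{Z^{\rm rev}}=e_Z$ should come out as $H_1e_Z=e_Z$, that is, $e_Z$ being a $v$-eigenvector of $H_1$.

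For the statement inside a larger tableau, two observations are needed.
\begin{itemize}
\item Welsh's column and Garnir relations are local. Applied to two adjacent columns, they leave the complementary columns untouched.
\item The inversion count of the column-reading word splits as a sum of three parts: inversions internal to the four entries, inversions internal to the complement, and inversions between the block and the complement. The last part depends only on the set $\{a,b,c,d\}$ and on the block's position, not on how the four entries are arranged. So all six normalizing exponents $v^{-\ell(w_T)}$ shift by the same constant, and the local identities transfer verbatim.
\end{itemize}

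The main obstacle I expect is bookkeeping of conventions. Welsh's Garnir relation is stated with $q$ and with his own orientation (rows versus columns, and the direction of action). Checking that it yields exactly the signs and powers $ve_X-v^2e_A$ is where an error could creep in, so I would first verify the $2\times2$ matrices against the quadratic and braid relations before trusting the derived formulas.
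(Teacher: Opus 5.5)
\textbf{Gap: a wrong cover in your chain breaks three of the four identities.} With $a,b,c,d=1,2,3,4$ one has $s_1Z=\begin{pmatrix}2&1\\4&3\end{pmatrix}=X^{\rm rev}$, not $Z^{\rm rev}$. The correct cover is $Z^{\rm rev}=s_1X$, which you in fact write down yourself when you compute $s_1X=\begin{pmatrix}2&1\\3&4\end{pmatrix}$.

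Your own length table already rules out $Z\to Z^{\rm rev}$ as a cover, since $\ell(w_Z)=\ell(w_{Z^{\rm rev}})=2$. Likewise, your words $H_1H_3H_2$, $H_3H_1H_3H_2$, $H_2H_3H_1H_3H_2$ have lengths $3,4,5$, not $2,3,4$. Multiplying them out gives $e_{Z^{\rm rev}}=H_1e_Z=e_{X^{\rm rev}}=v(e_A+e_Z)\neq e_Z$, and wrong values for $e_{X^{\rm rev}}$ and $e_{A^{\rm rev}}$ as well.

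Your intended consistency check also fails. $H_1e_Z=e_Z$ is impossible for $e_Z\neq 0$, because $(H_1-v)(H_1+v^{-1})=0$ and $S^{(r,r)}$ is torsion-free. Nor is $e_Z$ a $v$-eigenvector of $H_1$, since $H_1e_Z=v(e_A+e_Z)$.

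\textbf{The repair.} Use
\[
e_{Z^{\rm rev}}=H_1H_2e_A,\qquad e_{X^{\rm rev}}=H_1H_3H_2e_A,\qquad e_{A^{\rm rev}}=H_2H_1H_3H_2e_A .
\]
On the basis $(e_A,e_X)$, three facts force $H_1e_X=H_3e_X=ve_X-v^2e_A$: the column relations $H_1e_A=H_3e_A=-v^{-1}e_A$, the braid relations with $H_2$, and the quadratic relation. This gives (7.6) and $e_{Z^{\rm rev}}=e_Z$ at once, so the Garnir relation you expected is forced rather than imported. Then $e_{X^{\rm rev}}=H_1e_Z=v(e_A+e_Z)$ and $e_{A^{\rm rev}}=H_2e_{X^{\rm rev}}=v^2e_A$.

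\textbf{Comparison with the paper.} Corrected, your route differs from the paper's. The paper writes down four raw Garnir/column relations for $\mathbf v_Z,\mathbf v_{A^{\rm rev}},\mathbf v_{X^{\rm rev}},\mathbf v_{Z^{\rm rev}}$ and solves them. You instead rely on the Hecke module structure, which needs two inputs you should state explicitly.
\begin{enumerate}
\item $H_ie_T=e_{s_iT}$ for length-raising $s_i$ must hold for the nonstandard tableaux $Z,Z^{\rm rev},X^{\rm rev},A^{\rm rev}$. This requires Welsh's action formula on arbitrary tableaux; the Heard--Kujawa cover formula quoted in the paper covers standard tableaux only.
\item In a larger tableau, $a<b<c<d$ need not be consecutive, so no $H_1,H_2,H_3$ acts on them there. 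The transfer needs two things. First, identities valid in $S^{(2,2)}$ follow from the two-column column and Garnir relations, whose coefficients depend only on relative order. Second, your observation that all six normalizing exponents shift by the common constant $\ell_0$.
\end{enumerate}
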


\begin{proof}
Write $\ell_0$ for the contribution to $\ell(w_T)$ from the complementary
columns and from their comparisons with the displayed entries.  Since the
same four entries occupy the same four adjacent positions, this contribution
is common to all six tableaux.  Their internal inversion numbers are
\[
\begin{array}{c|cccccc}
 U&A&X&Z&A^{\rm rev}&X^{\rm rev}&Z^{\rm rev}\\ \hline
 \ell(w_U)-\ell_0&0&1&2&4&3&2.
\end{array}
 \tag{7.8}\label{local-length-table}
\]
Welsh's three-entry Garnir relation \cite[(7)]{Welsh}, followed where
necessary by the column relation \cite[(4)]{Welsh}, gives the following
relations for the unnormalized vectors:
\[
\begin{aligned}
 \mathbf v_Z-q\mathbf v_X+q^2\mathbf v_A&=0,\\
 \mathbf v_{A^{\rm rev}}+q\mathbf v_Z-q^2\mathbf v_X&=0,\\
 \mathbf v_{X^{\rm rev}}-q\mathbf v_Z-q^2\mathbf v_A&=0,\\
 \mathbf v_{Z^{\rm rev}}-q\mathbf v_X+q^2\mathbf v_A&=0.
\end{aligned}
 \tag{7.9}\label{raw-local-relations}
\]
For completeness, the second, third, and fourth lines are obtained by applying
the Garnir relation to the top-row descents of
$A^{\rm rev},X^{\rm rev},Z^{\rm rev}$, respectively.  A term with a descending
right column is replaced by its negative after reversing that column.  No
other entries move, which is why the calculation is local.

The first line of \eqref{raw-local-relations} gives
$\mathbf v_Z=q\mathbf v_X-q^2\mathbf v_A$.  Substitution in the remaining
three lines yields
\[
 \mathbf v_{A^{\rm rev}}=q^3\mathbf v_A,\qquad
 \mathbf v_{X^{\rm rev}}=q\mathbf v_Z+q^2\mathbf v_A,\qquad
 \mathbf v_{Z^{\rm rev}}=\mathbf v_Z.
\]
Now use $q=v^2$, the length table \eqref{local-length-table}, and
\eqref{normalized-tableau}.  The resulting four equations are exactly
\eqref{Z-straightening} and \eqref{reversed-straightening}.
\end{proof}

Set
\[
 L^+_{\rm st}=
 \bigoplus_{T\in\operatorname{Std}(r,r)}\mathbb Z[v]e_T
 \subset S^{(r,r)}.
\]

\begin{lemma}\label{integral-straightening}
For every column-standard tableau $T$ of shape $(r,r)$,
\[
 e_T\in L^+_{\rm st}.
\]
If the top row of $T$ is increasing and the bottom row is not, then
\[
 e_T\in vL^+_{\rm st}.
 \tag{7.10}\label{strict-straightening}
\]
In particular, $e_{T_x}\in vL^+_{\rm st}$ whenever $x\notin C_1^n$.
\end{lemma}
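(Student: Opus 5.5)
The plan is to prove both assertions together by induction on a suitable statistic measuring how far a column-standard tableau $T$ is from being standard, using only the local identities of Lemma~\ref{two-column-straightening}. Every coefficient appearing in \eqref{Z-straightening} and \eqref{reversed-straightening} lies in $\mathbb Z[v]$; apart from $e_{Z^{\rm rev}}=e_Z$, every relation expresses the less standard configuration as $v$ or $v^2$ times a combination of more standard ones. So rewriting a bad adjacent pair of columns never introduces negative powers of $v$. Each relation in which the bottom row is repaired contributes at least one factor of $v$.

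Concretely, I would first reduce to tableaux whose columns are increasing from left to right in the top row. If two adjacent columns have top entries in decreasing order, the four entries form one of $A^{\rm rev},X^{\rm rev},Z^{\rm rev}$ relative to their sorted labels $a<b<c<d$. Applying \eqref{reversed-straightening} replaces $e_T$ by $\mathbb Z[v]$-combinations of $e_{T'}$ with $T'$ in configuration $A$ or $Z$ at that pair. These have fewer top-row inversions. In every case the top row becomes increasing in that position, since both $A$ and $Z$ have top entries $a<c$ or $a<b$. Inducting on the number of top-row inversions, sorted by the column-reading length $\ell(w_T)$, shows that $e_T$ lies in the $\mathbb Z[v]$-span of $e_{T'}$ with $T'$ top-row increasing.

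Next, for $T$ column-standard with increasing top row but nonincreasing bottom row, I would pick adjacent columns $\binom{p}{q},\binom{p'}{q'}$ with $p<p'$ and $q>q'$. Column-standardness, together with $p<p'$, forces the entries to be $a<b<c<d$ in configuration $Z$: $p=a$, $p'=b$, $q=d$, $q'=c$. The case $p'>q'$ cannot occur, because $q'>p'$. Then \eqref{Z-straightening} gives $e_T=v\,e_{T_X}-v^2e_{T_A}$. Both $T_X$ and $T_A$ still have increasing top row, and each has strictly fewer bottom-row inversions. In the $A$ case the top row changes to $a,c$, but it stays increasing relative to neighbouring columns only if those neighbours cooperate. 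So I would handle this by reapplying the first step, whose outputs lie in $\mathbb Z[v]$ and do not destroy the gained factor $v$. The induction uses the lexicographic pair consisting of the bottom-row inversion count and $\ell(w_T)$. It yields $e_T\in vL^+_{\rm st}+v\cdot(\text{span of top-increasing }e_{T'})$. Combining this with the first claim applied to those $T'$ gives \eqref{strict-straightening}. This also finishes the first assertion, since standard tableaux are trivially in $L^+_{\rm st}$.

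The final sentence then follows from \eqref{C1-standard}. The tableau $T_x$ has increasing top row $a_1<\cdots<a_r$ by construction, and if $x\notin C_1^n$ it is not standard, so its bottom row is not increasing.

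The main obstacle is the termination argument. Replacing $Z$ by $A$ can create a new top-row descent with the next column, so the two reductions interleave. I would first try the well-ordering by $\ell(w_T)$ itself. Table \eqref{local-length-table} shows that every replacement strictly decreases the internal length (from 2 to 1 or 0, from 4 to 0, and from 3 to 0 or 2), except $Z^{\rm rev}\to Z$, which keeps length 2 but lowers the top-row inversions. So the lexicographic order on $\ell(w_T)$ and then top-row inversions strictly decreases throughout, giving termination.
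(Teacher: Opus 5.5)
Your final argument is the paper's proof: straighten with \eqref{Z-straightening} and \eqref{reversed-straightening}, get termination from the lexicographic order on $(\ell(w_T),\operatorname{inv}(\text{top row}))$, and obtain the factor $v$ from the single $Z$-move $e_T=ve_{T_X}-v^2e_{T_A}$, where $e_{T_X},e_{T_A}\in L^+_{\rm st}$ by the first assertion. Drop your intermediate orderings, though, because they fail: the move $X^{\rm rev}\to A$ can increase top-row inversions, the move $Z\to A$ can increase bottom-row inversions, and $T_A$ need not keep an increasing top row, so only the $(\ell(w_T),\text{top-row inversions})$ order from your last paragraph actually works.
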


\begin{proof}
If the top row has an adjacent descent, the corresponding pair of columns is
one of $A^{\rm rev},X^{\rm rev},Z^{\rm rev}$, and
Lemma~\ref{two-column-straightening} applies through
\eqref{reversed-straightening}.  If the top row is increasing but the bottom
row has an adjacent descent, the corresponding pair is $Z$, and we use
\eqref{Z-straightening}.  Each resulting tableau is column-standard.

Order tableaux lexicographically by
\[
 \bigl(\ell(w_T),\operatorname{inv}(\text{top row of }T)\bigr).
\]
The $A^{\rm rev}$ and $X^{\rm rev}$ moves lower the first coordinate, the
$Z^{\rm rev}$ move preserves the first coordinate and lowers the second, and
the $Z$ move lowers the first coordinate.  Comparisons with entries outside
the displayed adjacent columns are unchanged.  The process therefore
terminates, and all its coefficients lie in $\mathbb Z[v]$.

Under the additional hypothesis in the second assertion, the first move is
necessarily \eqref{Z-straightening}; both of its coefficients are divisible
by $v$.  Every later coefficient belongs to $\mathbb Z[v]$, so divisibility
by $v$ persists.  The final assertion follows from \eqref{C1-standard}.
\end{proof}

Put
\[
 x_0=(1,2)(3,4)\cdots(n-1,n).
\]
Let
\[
 \widehat{\mathcal M}=\bigoplus_{x\in\cF_n}\mathcal A \widehat{M}_x
\]
be the positive companion module, with left $\mathcal H_n$-action
\[
 H_i\widehat{M}_x=
 \begin{cases}
 \widehat{M}_y,&\h(y)>\h(x),\\
 \widehat{M}_y+(v-v^{-1})\widehat{M}_x,&\h(y)<\h(x),\\
 -v^{-1}\widehat{M}_x,&y=x,
 \end{cases}
 \qquad y=s_ixs_i.
 \tag{7.11}\label{Mhat-action}
\]
This is the companion of the model introduced in
Section~\ref{prelim-sect}.  Its bar involution is the semilinear map
characterized by $\overline v=v^{-1}$,
$\overline{\widehat{M}_{x_0}}=\widehat{M}_{x_0}$, and
$\overline{H\eta}=\overline H\,\overline\eta$.  Its existence is part of the
quasiparabolic companion-module construction
\cite[Theorem~3.16 and Corollary~3.20]{Marberg}; the adjective ``positive''
refers to the corresponding $v\mathbb Z[v]$ triangular convention.

\begin{proposition}\label{specht-quotient}
The map
\[
 \pi:\widehat{\mathcal M}\longrightarrow S^{(r,r)},\qquad
 \pi(\widehat{M}_x)=e_{T_x},
 \tag{7.12}\label{specht-quotient-map}
\]
is a surjective $\mathcal H_n$-module homomorphism.  Moreover,
\[
 \pi(\widehat{M}_x)\in L^+_{\rm st}\quad(x\in\cF_n),\qquad
 \pi(\widehat{M}_x)\in vL^+_{\rm st}\quad(x\notin C_1^n).
 \tag{7.13}\label{standard-image-lattice}
\]
\end{proposition}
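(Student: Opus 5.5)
Since $\widehat{\mathcal M}$ is free on $\{\widehat M_x\}$, the map $\pi$ is defined $\mathcal A$-linearly by \eqref{specht-quotient-map}. The lattice statements \eqref{standard-image-lattice} are immediate from Lemma~\ref{integral-straightening}: each $T_x$ is column-standard, so $e_{T_x}\in L^+_{\rm st}$. If $x\notin C_1^n$, then \eqref{C1-standard} shows that $T_x$ is not standard. Its top row $a_1<\cdots<a_r$ is increasing by construction, so its bottom row must fail to increase, and \eqref{strict-straightening} applies.

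Surjectivity is also immediate. By \eqref{C1-standard}, every standard $T$ equals $T_x$ for a unique nonnesting $x$, so the image contains the $\mathcal A$-basis $\{e_T:T\in\operatorname{Std}(r,r)\}$ of Welsh.

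The substantive part is $\mathcal H_n$-equivariance. It suffices to check
\[
\pi(H_i\widehat M_x)=H_i e_{T_x}
\]
for each generator $H_i$ and each $x$, comparing with the three cases of \eqref{Mhat-action}. Put $y=s_ixs_i$; then $T_y$ is obtained from $T_x$ by swapping the labels $i,i+1$ and re-sorting into the canonical form (columns ordered by $a_j$, each column increasing). I will split by the positions of $i$ and $i+1$ in $T_x$.

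\begin{enumerate}
\item \emph{Same column.} Then $y=x$, and Welsh's column relation, converted through \eqref{parameter-conversion}, gives $h_i\mathbf v_T=-\mathbf v_T$. Hence $H_ie_{T_x}=-v^{-1}e_{T_x}$, matching the third case.
\item \emph{Different columns, swap yields a column-standard tableau directly.} Here $T_y=s_iT_x$, and by \eqref{height-tableau-length} $\ell(w_{T_y})=\h(y)=\h(x)\pm1$. Welsh's action $h_i\mathbf v_T=\mathbf v_{s_iT}$ (ascent) or $h_i\mathbf v_T=q\mathbf v_{s_iT}+(q-1)\mathbf v_T$ (descent), after normalizing by $v^{-\ell}$, becomes $H_ie_T=e_{s_iT}$, respectively $H_ie_T=e_{s_iT}+(v-v^{-1})e_T$. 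These match the first two cases of \eqref{Mhat-action}.
\item \emph{Different columns, swap breaks column order or top-row sorting.} Swapping $i,i+1$ may reverse the order of the tops of two columns (when $i=a_j$, $i+1=a_{j+1}$) or produce a decreasing column. In the first situation I would show that permuting whole columns of a tableau does not change $\mathbf v_T$ up to the appropriate power of $q$; the reversal identities \eqref{reversed-straightening} encode exactly this for adjacent columns. In the second, the column relation gives a sign and power of $q$. In each situation I then check that the power of $v$ matches the height change predicted by \eqref{height-tableau-length}.
\end{enumerate}

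I expect case 3 to be the main obstacle, specifically reconciling the $q$-powers from Welsh's column relation and the column reorderings with the normalization \eqref{normalized-tableau}. Since $\ell(w_{T})$ counts inversions of the column-reading word, which depends on column order, the tableau must be kept in the canonical order used to define $T_x$. My first attempt would be to show that the orders $T_x$ and $s_iT_x$ differ only in the relative order of the two columns containing $i$ and $i+1$, reducing the check to a two-column computation with four entries $a<b<c<d$ of the type in Lemma~\ref{two-column-straightening}. There it becomes a finite check against the length table \eqref{local-length-table}.

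Alternatively, I could bypass the case analysis. Both actions satisfy the quadratic and braid relations, so it suffices to show that $\pi$ intertwines $H_i$ on a generating set. One could try to realize $\pi$ as $h\widehat M_{x_0}\mapsto h\,e_{t^-}$, after checking that $H_i e_{t^-}=-v^{-1}e_{t^-}$ for all odd $i$ and that $\widehat{\mathcal M}$ is a quotient of the induced module from the stabilizer of $x_0$. This route, however, requires identifying the annihilator, so I would use the direct case check as the main route.
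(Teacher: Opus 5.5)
Your outline follows the paper's proof. The lattice statements and surjectivity come from Lemma~\ref{integral-straightening} and \eqref{C1-standard} exactly as you say. Equivariance is checked generator by generator, and the only nontrivial reordering occurs when $i$ and $i+1$ are both smaller endpoints, where everything reduces to Lemma~\ref{two-column-straightening}. Your ``decreasing column'' situation never occurs: interchanging consecutive labels that lie in different columns cannot reverse either column.

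However, the claim you propose to prove in case 3, that reordering whole columns changes $\mathbf v_T$ only by a power of $q$, is false, and the computation depends on its failure. By \eqref{reversed-straightening}, $e_{A^{\rm rev}}$ and $e_{Z^{\rm rev}}$ are scalar multiples of $e_A$ and $e_Z$, but $e_{X^{\rm rev}}=v(e_A+e_Z)$ is not a multiple of $e_X$.

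This is exactly the nested subcase. Suppose $x$ contains the edges $(i,\beta)$ and $(i+1,\alpha)$ with $i+1<\alpha<\beta$. Then $T_x$, $s_iT_x$, $T_y$ have configurations $Z$, $X^{\rm rev}$, $X$. Here $\h(y)=\h(x)-1$, yet by \eqref{local-length-table} the column-reading length increases from $Z$ to $X^{\rm rev}$. So the formula that applies is $h_i\mathbf v_T=\mathbf v_{s_iT}$, not the descent formula, and it gives
$H_ie_{T_x}=e_{X^{\rm rev}}=v(e_A+e_Z)=e_X+(v-v^{-1})e_Z$
after eliminating $e_A$ with \eqref{Z-straightening}. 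The term $(v-v^{-1})e_{T_x}$ required by the second line of \eqref{Mhat-action} arises only from this two-term straightening. Matching powers of $v$ against the height change therefore cannot succeed in this subcase.

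There are two ways to fix this. You can carry out the computation above with the full identity. Or you can follow the paper: treat only $\h(y)>\h(x)$ directly, where the only reversal needed is $X\mapsto Z^{\rm rev}$ and $e_{Z^{\rm rev}}=e_Z$ really is scalar. Then deduce the case $\h(y)<\h(x)$ by applying the increasing case to $y$ and using $H_i^2=1+(v-v^{-1})H_i$. This second route also makes your separate use of Welsh's descent formula in case 2 unnecessary.
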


\begin{proof}
Put $y=s_ixs_i$.  We first assume $\h(y)>\h(x)$.  Since
\eqref{height-tableau-length} identifies height with column-reading length,
Welsh's action formula and \eqref{parameter-conversion} give
\[
 H_ie_{T_x}=e_{s_iT_x},
 \tag{7.14}\label{tableau-generator-action}
\]
where $s_iT_x$ means that the entries $i$ and $i+1$ are interchanged.

We check that the right side is $e_{T_y}$.  If $i$ and $i+1$ belong to
different edges, interchanging them cannot reverse either edge, since they
are consecutive.  The canonical order of the columns can change only when
both $i$ and $i+1$ are the smaller endpoints of their edges.  In that case
write the two edges as
\[
 (i,a),\ (i+1,b),\qquad a,b>i+1.
\]
The height-increasing case is $a<b$.  Relative to the four ordered endpoints,
$T_x$ has configuration $X$, the swapped tableau $s_iT_x$ has configuration
$Z^{\rm rev}$, and $T_y$ has configuration $Z$.  Hence
$e_{s_iT_x}=e_{T_y}$ by the last identity in
\eqref{reversed-straightening}.  This exhausts the height-increasing cases.

If $\h(y)<\h(x)$, apply the already proved increasing case to $y$ and use
$H_i^2=1+(v-v^{-1})H_i$; this gives
\[
 H_ie_{T_x}=e_{T_y}+(v-v^{-1})e_{T_x}.
\]
Finally, $y=x$ occurs precisely when $(i,i+1)$ is an edge of $x$.  The two
entries then form a column, and Welsh's column relation gives
$H_ie_{T_x}=-v^{-1}e_{T_x}$.  Thus \eqref{specht-quotient-map} intertwines
all generators.

For $c\in C_1^n$, the tableaux $T_c$ run through
$\operatorname{Std}(r,r)$, so their vectors form the standard Specht basis.
Therefore $\pi$ is surjective.  The two lattice assertions are
Lemma~\ref{integral-straightening}.
\end{proof}

\subsection{The canonical-basis kernel}

Let $\operatorname{NC}_r$ be the set of noncrossing perfect matchings of
$[2r]$.  Thus every $D\in\operatorname{NC}_r$ consists of $r$ disjoint pairs
covering $[2r]$, and it has no two pairs $(a,c)$ and $(b,d)$ with
$a<b<c<d$.  For $T\in\operatorname{Std}(r,r)$, define
$\psi(T)\in\operatorname{NC}_r$ by reading the entries of the bottom row from
left to right and joining each one to the largest still-unmatched top-row
entry smaller than it.  The standardness of $T$ guarantees that this
procedure is defined and gives a noncrossing matching.  Transport right weak
order to webs by declaring
\[
 D\preceq D'
 \quad\Longleftrightarrow\quad
 T^{\mathsf t}\leq_{\rm wk}(T')^{\mathsf t}
 \quad\text{when }D=\psi(T)\text{ and }D'=\psi(T').
\]

In the diagrammatic realization of Heard and Kujawa, let $w_D$ be the vector
represented by $D\in\operatorname{NC}_r$.  Section~3.4 of \cite{HK} defines
the Hecke action on the free module with these diagram vectors as basis, and
\cite[Proposition~4.2.1]{HK} identifies that web module with $S^{(r,r)}$.
Thus
\[
 \mathcal W_r:=\{w_D:D\in\operatorname{NC}_r\}
\]
is an $\mathcal A$-basis of $S^{(r,r)}$.
Under \eqref{HK-basis-identification}, their unitriangularity and positivity
theorems give the more explicit standard-to-web formula
\[
 e_T=w_{\psi(T)}+
 \sum_{D\prec\psi(T)}a_{D,T}(v)w_D,
 \qquad a_{D,T}(v)\in\mathbb Z_{\geq0}[v],
\]
where $\prec$ is the strict part of $\preceq$ defined above
\cite[Theorems~5.1.1 and~5.2.1]{HK}.  Consequently
\[
 L^+_{\rm st}
 =\bigoplus_{w\in\mathcal W_r}\mathbb Z[v]w.
 \tag{7.15}\label{web-lattice}
\]

Let
\[
 D_0=\{(1,2),(3,4),\ldots,(2r-1,2r)\},
 \qquad w_0=w_{D_0}.
\]
This is the adjacent-cap web and $D_0=\psi(t^-)$.  In the right-module
convention, the bar involution on $S^{(r,r)}$ is the semilinear involution
normalized by $\overline{w_0}=w_0$ and
$\overline{\xi H}=\overline\xi\,\overline H$; we use its transport to the
left-module convention through $H_i\mapsto H_i$.

\begin{lemma}\label{web-bar-lemma}
Every web $w\in\mathcal W_r$ is fixed by the bar involution on
$S^{(r,r)}$.  Consequently,
\[
 \{\xi\in vL^+_{\rm st}:\overline{\xi}=\xi\}=0.
 \tag{7.16}\label{bar-lattice-intersection}
\]
\end{lemma}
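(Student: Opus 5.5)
We will prove the two assertions in order: first that each web $w_D$ is bar-invariant, and then that the bar-invariant part of $vL^+_{\rm st}$ is zero, using the web basis of \eqref{web-lattice}.

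\emph{Bar invariance of webs.} We argue by induction along the order $\preceq$ on $\operatorname{NC}_r$, starting from $D_0$, for which $\overline{w_0}=w_0$ holds by the normalization of the bar involution. In the diagrammatic model of Heard and Kujawa \cite[Section~3.4]{HK}, the generator acting at positions $i,i+1$ acts on a web according to whether $i,i+1$ is a cap of $D$.
\begin{itemize}
\item If $i,i+1$ is a cap, the web is an eigenvector with eigenvalue $-v^{-1}$ in our convention.
\item If it is not a cap, we use the Kazhdan--Lusztig element $\underline H_i=H_i+v^{-1}$, which is bar-invariant. Applied to $w_D$, it returns the web $w_{D'}$ obtained by the local cap-cup surgery at $i,i+1$, possibly plus $w_D$ times a scalar that is itself bar-invariant (a loop value $-(v+v^{-1})$).
\end{itemize}
Every noncrossing matching is reached from $D_0$ by such surgeries, which is the statement that $\psi$ transports weak order. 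Hence each $w_{D'}$ is written as a bar-invariant expression, namely $\underline H_i w_D$ minus bar-invariant multiples of webs already known to be bar-invariant. This gives $\overline{w_{D'}}=w_{D'}$ by induction.

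The main obstacle is matching the sign and normalization conventions for $\underline H_i$ acting on webs after transport from the right-module convention of \cite{HK}, with parameter $v$. If that bookkeeping becomes awkward, I would first try an alternative: in the HK realization the diagram relations (isotopy and loop value) are defined over $\mathbb Z[v+v^{-1}]$, so the action of each $\underline H_i$ on webs has bar-invariant coefficients. A bar involution fixing all webs is then $\mathcal H$-compatible and fixes $w_0$, so it agrees with the given one by uniqueness, because $S^{(r,r)}$ is cyclic on $w_0$.

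\emph{The lattice consequence.} Let $\xi\in vL^+_{\rm st}$ with $\overline\xi=\xi$. By \eqref{web-lattice} we can write $\xi=\sum_D f_D(v)w_D$ with every $f_D\in v\mathbb Z[v]$. Since the webs are bar-invariant and form a basis, the equality $\overline\xi=\xi$ forces $f_D(v^{-1})=f_D(v)$ for each $D$. A polynomial in $v\mathbb Z[v]$ that is symmetric under $v\mapsto v^{-1}$ must vanish: its top-degree term would otherwise require a matching term of negative degree. Therefore every $f_D=0$, so $\xi=0$, which proves \eqref{bar-lattice-intersection}.
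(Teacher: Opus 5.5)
Your proposal is correct and is essentially the paper's argument: the paper writes every web as $w=w_0E_{i_1}\cdots E_{i_k}$ using the bar-invariant element $E_i=T_i-v$, which sends a non-cap web directly to its surgered web, while your $\underline H_i=H_i+v^{-1}=E_i+(v+v^{-1})$ also produces the bar-invariant term $(v+v^{-1})w_D$ (always present, and coming from this constant rather than from a loop value), which your induction correctly absorbs; the lattice consequence is then argued exactly as in the paper. Your fallback route also works and gives slightly more: you define the involution to fix all webs, check $\mathcal H$-compatibility from the $\mathbb Z[v+v^{-1}]$ coefficients of the diagrammatic action, and invoke cyclicity on $w_0$, which proves that a bar involution on $S^{(r,r)}$ with $\overline{w_0}=w_0$ exists at all, something the paper simply assumes.
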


\begin{proof}
Use the right-module convention temporarily, and write $T_i$ for the
generators used by Heard and Kujawa.  The bar involution satisfies
\[
 \overline{T_i}=T_i^{-1}=T_i-(v-v^{-1}).
\]
Hence
\[
 E_i:=T_i-v
 \qquad\text{satisfies}\qquad
 \overline{E_i}=E_i.
 \tag{7.17}\label{bar-invariant-web-generator}
\]
The minimal standard vector corresponds to the adjacent-cap web $w_0$ and is
bar invariant.  The web poset is identified with the weak order on standard
tableaux, and its covers are obtained by the nonzero $E_i$-moves
\cite[Proposition~3.5.1]{HK}.  Since this poset has the unique minimum $w_0$,
every web has the form
\[
 w=w_0E_{i_1}\cdots E_{i_k}
\]
for a suitable sequence of covers.  Equation
\eqref{bar-invariant-web-generator} therefore gives $\overline w=w$.
Transporting back through $H_i\mapsto H_i$ proves the left-module assertion.

Now write $\xi=\sum_w f_w(v)w\in vL^+_{\rm st}$.  Bar invariance gives
$f_w(v)=f_w(v^{-1})$ for every $w$, while
$f_w(v)\in v\mathbb Z[v]$.  Since
\[
 v\mathbb Z[v]\cap v^{-1}\mathbb Z[v^{-1}]=0,
\]
all $f_w$ vanish.
\end{proof}

By \cite[Corollary~3.20]{Marberg}, there is a unique basis
$\{\widehat{M}'_x:x\in\cF_n\}$ satisfying
\[
 \overline{\widehat{M}'_x}=\widehat{M}'_x,\qquad
 \widehat{M}'_x\in \widehat{M}_x+\sum_{z<x}v\mathbb Z[v]\widehat{M}_z.
 \tag{7.18}\label{positive-canonical-triangularity}
\]
We call it the \emph{positive canonical basis} of $\widehat{\mathcal M}$.

\begin{lemma}\label{bar-compatible-quotient}
The quotient map $\pi$ is compatible with bar involutions:
\[
 \pi(\overline\eta)=\overline{\pi(\eta)}
 \qquad(\eta\in\widehat{\mathcal M}).
 \tag{7.19}\label{bar-compatible-map}
\]
\end{lemma}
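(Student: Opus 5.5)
Both bar involutions are semilinear with $\overline v=v^{-1}$, and $\pi$ is $\mathcal A$-linear and $\mathcal H_n$-equivariant by Proposition~\ref{specht-quotient}. The plan is to compare the two maps $\eta\mapsto\pi(\overline\eta)$ and $\eta\mapsto\overline{\pi(\eta)}$ from $\widehat{\mathcal M}$ to $S^{(r,r)}$. Each is additive and $v$-semilinear. Each also intertwines the action in the twisted sense: it sends $H\eta$ to $\overline H$ applied to its value. For the first map this uses $\overline{H\eta}=\overline H\,\overline\eta$ in $\widehat{\mathcal M}$ followed by $\mathcal H_n$-linearity of $\pi$. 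For the second map it uses linearity of $\pi$ followed by the compatibility $\overline{H\xi}=\overline H\,\overline\xi$ of the bar involution on $S^{(r,r)}$, transported to the left convention through $H_i\mapsto H_i$.

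The key step is to show that $\widehat{\mathcal M}$ is generated as an $\mathcal H_n$-module by $\widehat M_{x_0}$. Two semilinear twisted-equivariant maps that agree on a generator then agree everywhere. Generation follows from \eqref{Mhat-action} by induction on $\h(x)$. If $x\neq x_0$, then $x$ is not $W$-minimal, because $x_0$ is the unique minimal element by Lemma~\ref{minimal-lem}. So there is an $s_i$ with $y=s_ixs_i$ and $\h(y)<\h(x)$, and then $\widehat M_x=H_i\widehat M_y$. Hence every $\widehat M_x$ has the form $H_{i_k}\cdots H_{i_1}\widehat M_{x_0}$, and both maps send such an element to $\overline{H_{i_k}\cdots H_{i_1}}$ applied to their common value at $\widehat M_{x_0}$.

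It remains to check agreement on $\widehat M_{x_0}$. On one side, $\pi(\overline{\widehat M_{x_0}})=\pi(\widehat M_{x_0})=e_{T_{x_0}}$. The tableau $T_{x_0}$ equals $t^-$, so $\ell(w_{T_{x_0}})=0$ and $e_{T_{x_0}}=\mathbf v_{t^-}$. Under \eqref{HK-basis-identification} this vector corresponds to the generator $z_{(r,r)}$, which is the minimal standard vector. By the proof of Lemma~\ref{web-bar-lemma} this vector is the adjacent-cap web $w_0=w_{D_0}$, where $D_0=\psi(t^-)$, and the web bar involution is normalized by $\overline{w_0}=w_0$. So the other side is $\overline{\pi(\widehat M_{x_0})}=\overline{w_0}=w_0=\pi(\widehat M_{x_0})$, and the two values agree.

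I expect the main obstacle to be this last identification, namely that $e_{t^-}$ is literally the web $w_0$ and not only its leading term. The standard-to-web formula gives $e_{t^-}=w_{D_0}+\sum_{D\prec D_0}a_{D,t^-}w_D$. Since $D_0$ is the minimum of $\preceq$, the sum is empty, and this settles it. A smaller point also needs checking: the left and right conventions must both be carried through the anti-involution $H_i\mapsto H_i$. This works because $\overline{\phantom{H}}$ on $\mathcal H_n$ commutes with that anti-involution, since both fix the $H_i$ up to the $v\mapsto v^{-1}$ twist.
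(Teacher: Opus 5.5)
Your proposal is correct and follows the paper's proof essentially step for step: you show that $\widehat{\mathcal M}$ is cyclic on $\widehat M_{x_0}$ by induction on height, that both semilinear maps are twisted-equivariant, and that they agree on the generator because $e_{t^-}$ is the bar-invariant adjacent-cap web $w_0$. Your two extra remarks make explicit points the paper asserts without comment: the unitriangular standard-to-web expansion has an empty tail at the minimum $D_0$, so $e_{t^-}=w_0$ exactly, and the bar involution commutes with the anti-involution $H_i\mapsto H_i$.
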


\begin{proof}
Recall that $x_0$ is the unique minimal element of $\cF_n$.  The module
$\widehat{\mathcal M}$ is
generated by $\widehat{M}_{x_0}$.  Indeed, if $x\ne x_0$, then $x$ is not
$W$-minimal, so some $s_i$ satisfies $s_ixs_i<x$.  Induction on height and
the first line of \eqref{Mhat-action}, applied in the reverse direction, shows
that $\widehat{M}_x\in\mathcal H_n\widehat{M}_{x_0}$.

The bar on $\widehat{\mathcal M}$ fixes $\widehat{M}_{x_0}$.  Under
\eqref{HK-basis-identification}, $\pi(\widehat{M}_{x_0})=e_{T_{x_0}}$ is the
adjacent-cap web $w_0$, which is bar invariant by
Lemma~\ref{web-bar-lemma}.  The two semilinear maps
\[
 \eta\longmapsto\pi(\overline\eta),
 \qquad
 \eta\longmapsto\overline{\pi(\eta)}
\]
therefore agree on $\widehat{M}_{x_0}$, and both send $H\eta$ to
$\overline H$ times the image of $\eta$.  Since $\widehat{M}_{x_0}$ generates
$\widehat{\mathcal M}$, the maps agree everywhere.
\end{proof}

\begin{lemma}\label{canonical-kernel}
One has
\[
 \ker\pi=
 \bigoplus_{x\notin C_1^n}\mathcal A \widehat{M}'_x.
 \tag{7.20}\label{canonical-kernel-eq}
\]
\end{lemma}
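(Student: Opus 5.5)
We aim to prove \eqref{canonical-kernel-eq} in two steps: first show $\pi(\widehat{M}'_x)=0$ for every $x\notin C_1^n$, then compare ranks.

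\emph{Vanishing on the non-nonnesting canonical elements.} Fix $x\notin C_1^n$. By \eqref{positive-canonical-triangularity} we can write $\widehat{M}'_x=\widehat{M}_x+\sum_{z<x}f_{z,x}\widehat{M}_z$ with $f_{z,x}\in v\mathbb Z[v]$. Apply $\pi$ and use \eqref{standard-image-lattice}. Since $x\notin C_1^n$, the leading term $\pi(\widehat{M}_x)$ lies in $vL^+_{\rm st}$. Each lower term $f_{z,x}\pi(\widehat{M}_z)$ lies in $v\mathbb Z[v]\cdot L^+_{\rm st}\subseteq vL^+_{\rm st}$, whether or not $z\in C_1^n$. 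Hence $\pi(\widehat{M}'_x)\in vL^+_{\rm st}$. By Lemma~\ref{bar-compatible-quotient},
\[
\overline{\pi(\widehat{M}'_x)}=\pi\bigl(\overline{\widehat{M}'_x}\bigr)=\pi(\widehat{M}'_x),
\]
so $\pi(\widehat{M}'_x)$ is bar invariant. Lemma~\ref{web-bar-lemma}, in the form \eqref{bar-lattice-intersection}, then forces $\pi(\widehat{M}'_x)=0$. This proves
\[
\bigoplus_{x\notin C_1^n}\mathcal A\widehat{M}'_x\subseteq\ker\pi .
\]

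\emph{Reverse inclusion.} The family $\{\widehat{M}'_x\}$ is an $\mathcal A$-basis of $\widehat{\mathcal M}$, so $\widehat{\mathcal M}=K\oplus N$ with
\[
K=\bigoplus_{x\notin C_1^n}\mathcal A\widehat{M}'_x\subseteq\ker\pi,\qquad N=\bigoplus_{c\in C_1^n}\mathcal A\widehat{M}'_c .
\]
It therefore suffices to show that $\pi|_N$ is injective, and we will in fact show it is an isomorphism onto $S^{(r,r)}$. For $c\in C_1^n$ we have
\[
\pi(\widehat{M}'_c)=e_{T_c}+\sum_{z<c}f_{z,c}\,e_{T_z}.
\]
By the vanishing step, every $z\notin C_1^n$ can be re-expressed through $\widehat{M}_z\equiv-\sum_{z'<z}f_{z',z}\widehat{M}_{z'}$ modulo $\ker\pi$. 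Iterating this substitution downward in the Bruhat order, we obtain
\[
\pi(\widehat{M}'_c)\in e_{T_c}+\sum_{c'\in C_1^n,\ c'<c}v\mathbb Z[v]\,e_{T_{c'}} .
\]
The iteration terminates because heights strictly decrease at each step.

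Since $T_c$ runs over $\operatorname{Std}(r,r)$ as $c$ runs over $C_1^n$ (\eqref{C1-standard}), the $\{e_{T_c}\}$ form an $\mathcal A$-basis of $S^{(r,r)}$. The transition matrix from $\{e_{T_c}\}$ to $\{\pi(\widehat{M}'_c)\}$ is unitriangular with respect to Bruhat order restricted to $C_1^n$. Hence $\{\pi(\widehat{M}'_c):c\in C_1^n\}$ is also an $\mathcal A$-basis of $S^{(r,r)}$, and $\pi|_N$ is an isomorphism. If $\eta=k+m\in\ker\pi$ with $k\in K$ and $m\in N$, then $\pi(m)=0$, so $m=0$ and $\eta\in K$.

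\emph{Main obstacle.} The delicate point is the bookkeeping in the reverse inclusion. One has to check that rewriting the non-nonnesting $\widehat{M}_z$ modulo the kernel stays integral over $\mathbb Z[v]$ and only produces strictly lower nonnesting terms, so that unitriangularity really holds over $\mathcal A$ and not merely after extending to a field.

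If this bookkeeping proves awkward, there is a fallback. Extend scalars to $\mathbb Q(v)$ and use the rank count $|C_1^n|=|\operatorname{Std}(r,r)|$ to get $\ker\pi\otimes\mathbb Q(v)=K\otimes\mathbb Q(v)$. Then $K$ is a direct summand of $\widehat{\mathcal M}$ and $\widehat{\mathcal M}/\ker\pi\cong S^{(r,r)}$ is torsion-free, so $\ker\pi$ is saturated. Two saturated submodules with the same rationalization coincide, giving the equality over $\mathcal A$.
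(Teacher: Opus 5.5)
Your proof is correct. Its first half is exactly the paper's argument: \eqref{standard-image-lattice} and \eqref{positive-canonical-triangularity} place $\pi(\widehat{M}'_x)$ in $vL^+_{\rm st}$ for $x\notin C_1^n$, and bar compatibility together with \eqref{bar-lattice-intersection} then kills it.

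The two routes diverge in the reverse inclusion. The paper never eliminates nonstandard tableaux. It observes that $\pi(\widehat{M}'_c)$ is a bar-invariant element of $L^+_{\rm st}=\bigoplus_{w\in\mathcal W_r}\mathbb Z[v]w$, so its web coefficients are integers. Since $\pi(\widehat{M}'_c)\equiv e_{T_c}\pmod{vL^+_{\rm st}}$, these integers are the values at $v=0$ of the Heard--Kujawa standard-to-web coefficients. The resulting matrix is unitriangular over $\mathbb Z$ for the web order $\preceq$.

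You instead feed the vanishing back into \eqref{positive-canonical-triangularity}. By induction on height, $\pi(\widehat{M}_z)\in\sum_{c'\in C_1^n,\,c'<z}v\mathbb Z[v]\,e_{T_{c'}}$ for every $z\notin C_1^n$. Hence $\{\pi(\widehat{M}'_c)\}$ is unitriangular over $\mathbb Z[v]$ relative to Welsh's standard basis $\{e_{T_c}\}$ in Bruhat order. The integrality you flag as the main obstacle is automatic, since every coefficient produced is a sum of products of elements of $\mathbb Z[v]$, each product containing a factor in $v\mathbb Z[v]$. Termination and lowerness follow from height decreasing and transitivity of Bruhat order. So your rational-rank fallback, while also valid, is not needed.

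Your version gains economy. This half uses only Welsh's basis theorem and \eqref{positive-canonical-triangularity}, with no second appeal to bar invariance of webs or to the Heard--Kujawa transition matrix; the web theory enters only through the vanishing step. The paper's version gains information: it yields the explicit integral web expansion $\pi(\widehat{M}'_c)=w_{\psi(T_c)}+\sum_{D\prec\psi(T_c)}a_{D,T_c}(0)\,w_D$.
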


\begin{proof}
Let $x\notin C_1^n$.  Equations
\eqref{standard-image-lattice} and
\eqref{positive-canonical-triangularity} give
\[
 \pi(\widehat{M}'_x)\in vL^+_{\rm st}.
\]
This vector is bar invariant by Lemma~\ref{bar-compatible-quotient}, so
Lemma~\ref{web-bar-lemma} gives $\pi(\widehat{M}'_x)=0$.

For $c\in C_1^n$, triangularity and
\eqref{standard-image-lattice} show that $\pi(\widehat{M}'_c)\in L^+_{\rm st}$.
It is bar invariant, so its coefficients in the bar-invariant web basis are
constant integers.  Moreover,
\[
 \pi(\widehat{M}'_c)\equiv e_{T_c}\pmod{vL^+_{\rm st}}.
\]
Thus the transition matrix from
$\{\pi(\widehat{M}'_c):c\in C_1^n\}$ to $\mathcal W_r$ is the specialization at
$v=0$ of the unitriangular standard-to-web matrix.  It is unitriangular over
$\mathbb Z$, and the vectors $\pi(\widehat{M}'_c)$ form an $\mathcal A$-basis of
$S^{(r,r)}$.  Since $\{\widehat{M}'_x:x\in\cF_n\}$ is a basis of $\widehat{\mathcal M}$, the
displayed kernel formula follows.
\end{proof}

\begin{theorem}\label{C1-cell-thm}
For every even $n$, the molecule $C_1^n$ is a cell of $\Gamma^\m_n$.
\end{theorem}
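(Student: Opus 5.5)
The plan is to deduce the cell property from Lemma~\ref{canonical-kernel}: the canonical-basis span of the complement of $C_1^n$ is an $\mathcal H_n$-submodule. We then translate this submodule into a statement about the edges of $\Gamma^\m_n$.

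First we match the positive canonical basis $\{\widehat{M}'_x\}$ of $\widehat{\mathcal M}$ with the $W$-graph basis $\{Y_x\}$ of $\Gamma^\m_n$. We will use the companion-module construction of \cite[Theorem~3.16 and Corollary~3.20]{Marberg}: the action of $H_s$ on $\widehat{M}'_x$ is given by the $W$-graph formula with the same $\tau_\m$ and the same $\mu_\m$-weights, possibly after $v\mapsto -v^{-1}$ and the corresponding sign twist. Under this rule the arrow $x\to w$ has nonzero weight exactly when $\widehat{M}'_w$ occurs with nonzero coefficient in $H_s\widehat{M}'_x$ for some $s\in\tau_\m(x)\setminus\tau_\m(w)$. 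If the companion graph turns out to be the dual of $\Gamma^\m_n$ (edges reversed), the rest of the argument is unchanged except that every arrow below is reversed.

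Second, Lemma~\ref{canonical-kernel} says that $\ker\pi=\bigoplus_{x\notin C_1^n}\mathcal A\widehat{M}'_x$ is $\mathcal H_n$-stable, being a kernel. Take $x\notin C_1^n$, $s\in\tau_\m(x)$ and $c\in C_1^n$. The expansion of $H_s\widehat{M}'_x$ lies in $\ker\pi$, so it has zero coefficient on $\widehat{M}'_c$. Hence every edge leaving $\cF_n\setminus C_1^n$ ends outside $C_1^n$, and there is no edge from the complement into $C_1^n$.

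Third, suppose a vertex $y\notin C_1^n$ lay in the same strongly connected component as some $c\in C_1^n$. Then a directed path from $y$ to $c$ would exist. This path starts in the complement and ends in $C_1^n$, so it must contain an edge from the complement into $C_1^n$, which the second step forbids. Therefore $\mathcal K_1^n\subseteq C_1^n$. The reverse inclusion is automatic, since a molecule is connected by bidirected edges and so lies inside a single cell. Thus $\mathcal K_1^n=C_1^n$, which is Theorem~\ref{C1-cell-thm}. The orientation question from the first step does not matter here: closure in either direction rules out a two-way path.

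The main obstacle is the first step, namely the precise dictionary between $\widehat{M}'$ and the $W$-graph $\Gamma^\m_n$ built from $\underline M$. The natural route is to check that the map $M_x\mapsto(-1)^{\h(x)}\widehat{M}_x$ together with $v\mapsto -v^{-1}$ intertwines the two actions. One then compares the defining triangularities: $\underline M_x$ is triangular with coefficients in $v^{-1}\mathbb Z[v^{-1}]$, and $\widehat{M}'_x$ with coefficients in $v\mathbb Z[v]$. This should show that $\widehat{M}'_x$ corresponds to $\pm\underline M_x$ up to twisting, so that the structure constants agree up to sign with the $\mu_\m$ in $\omega_\m$. Only the vanishing or nonvanishing of these structure constants is needed, so sign ambiguities are harmless.
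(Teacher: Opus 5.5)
Your proposal is correct and follows essentially the same route as the paper. The paper also takes $\ker\pi=\bigoplus_{x\notin C_1^n}\mathcal A\widehat{M}'_x$ from Lemma~\ref{canonical-kernel} and uses \cite[Theorem~3.26]{Marberg} to say that the action matrices on $\{\widehat{M}'_x\}$ are the transposes of those defining $\Gamma^\m_n$. So the reversed-orientation case you allowed for is the one that occurs, and there are no edges out of $C_1^n$; the paper then concludes that $C_1^n$ is a union of cells lying in a single cell, exactly as your path argument does.

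One small fix: your sketched twist does not intertwine the actions as written. Use either $M_x\mapsto\widehat{M}_x$ with $v\mapsto -v^{-1}$ and no sign, or keep the sign $(-1)^{\h(x)}$ but pair it with $v\mapsto v^{-1}$ and $H_s\mapsto -H_s$. This changes nothing in your argument, which only uses the support of the structure constants.
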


\begin{proof}
By Proposition~\ref{specht-quotient} and
Lemma~\ref{canonical-kernel}, the span of
$\{\widehat{M}'_x:x\notin C_1^n\}$ is an $\mathcal H_n$-submodule of $\widehat{\mathcal M}$.
The action matrices on the positive canonical basis $\{\widehat{M}'_x\}$ are the
transposes of the action matrices defining $\Gamma^\m_n$
\cite[Theorem~3.26]{Marberg}.  Transposing the invariant-subspace statement
shows that the span of
$\{\underline M_c:c\in C_1^n\}$ is an $\mathcal H_n$-submodule.
Consequently $C_1^n$ is a union of cells of $\Gamma^\m_n$.

Theorem~\ref{D1-rec-thm} shows that $C_1^n$ is one molecule, and every
molecule is contained in a unique cell.  A set which is both one molecule and
a union of complete cells must equal its unique containing cell.  Hence
$C_1^n$ is a cell.
\end{proof}
\subsection*{Acknowledgements}
The authors declare that the data supporting the findings of this study are available within the paper.

\subsection*{Conflict of Interest}
The authors declare that they have no conflict of interest.

\end{document}